\newcommand{\updot}[1]{\raisebox{0.9pt}{$\stackrel{\bullet}{#1}$}} %%
\newcommand{\E}[1]{{\mathrm{E}\left[#1\right]}}
\newcommand{\Var}[1]{{\mathrm{Var}\left(#1\right)}}
\newcommand{\Cov}[1]{{\mathrm{Cov}\left[#1\right]}}
\newcommand{\T}{{\mathrm{T}}}
\newcommand{\diag}[1]{{\mathrm{diag}\left(#1\right)}}
\newcommand{\onevec}[1]{{{\mathbf{v}}_{\hspace{-.01in}#1}\hspace{-.01in}}}
\newcommand{\edit}[1]{{#1}}
\newcommand{\minrev}[1]{{#1}}
\numberwithin{equation}{section}
\numberwithin{table}{section}
\numberwithin{figure}{section}
\theoremstyle{plain}
\newtheorem{theorem}{Theorem}[section]
\newtheorem{lemma}[theorem]{Lemma}
\newtheorem{corollary}[theorem]{Corollary}
\newtheorem{proposition}[theorem]{Proposition}
\theoremstyle{definition}
\newtheorem{definition}{Definition}[section]
\newtheorem{problem}[definition]{Problem}
\theoremstyle{remark}
\newtheorem*{remark}{Remark}
\begin{document}
%\color{white}
%\pagecolor{black}

\title{Queues Driven by Hawkes Processes}
\author{
Andrew Daw \\ School of Operations Research and Information Engineering \\ Cornell University
\\ 257 Rhodes Hall, Ithaca, NY 14853 \\  amd399@cornell.edu \\
 \and
  Jamol Pender \\ School of Operations Research and Information Engineering \\ Cornell University
\\ 228 Rhodes Hall, Ithaca, NY 14853 \\  jjp274@cornell.edu
 }

\maketitle
\begin{abstract}
Many stochastic systems have arrival processes that exhibit clustering behavior. In these systems, arriving entities influence additional arrivals to occur through self-excitation of the arrival process. In this paper, we analyze an infinite server queueing system in which the arrivals are driven by the self-exciting Hawkes process and where service follows a phase-type distribution or is deterministic.  In the phase-type setting, we derive differential equations for the moments and a partial differential equation for the moment generating function; we also derive exact expressions for the transient and steady-state mean, variance, and covariances.  Furthermore, we also derive exact expressions for the auto-covariance of the queue and provide an expression for the cumulant moment generating function in terms of a single ordinary differential equation. In the deterministic service setting, we provide exact expressions for the first and second moments and the queue auto-covariance. As motivation for our Hawkes queueing model, we demonstrate its usefulness through two novel applications.  These applications are trending internet traffic and arrivals to nightclubs. In the web traffic setting, we investigate the impact of a click. In the nightclub or \emph{Club Queue} setting, we design an optimal control problem for the \minrev{optimal} rate to admit club-goers.
\end{abstract}

\noindent  \textbf{Keywords:}  Infinite-Server Queues, Hawkes Processes, Phase-type Distributions, Moments, Social Media, Nightclubs, Cumulant Moment Generating Function, Auto-covariance.

\noindent \textbf{Mathematics Subject Classification}: Primary 60K25, Secondary 	90B22; 93E20

\section{Introduction} \label{sec_intro}

%\textbf{Initial non-Poisson Motivation}:
The arrival process is a fundamental component of stochastic queueing models. In most models, these arrival processes are driven by a Poisson process, which is well suited for environments in which arrivals have no influence on one another. If the arrival process is a simple (single jump) random counting process with independent increments, \citet{prekopa1957poisson} shows that this is equivalent to a non-homogeneous  Poisson process. However, this can be unrealistic for many situations. For example, in the trading of financial assets, transactions tend to occur together as traders are often responding to the same information as their peers or to their actions \cite{azizpour2016exploring}. Additionally, earthquakes and other forms of geological tension frequently occur in quick succession, as aftershocks can continue to \edit{affect} an area soon after the initial tremors \cite{ogata1988statistical}.  Even patterns of violent crime have been known to occur in clusters, as victims may decide to retaliate \cite{mohler2011self}. In each of these examples, the occurrence of an event makes the occurrence of the next more likely to happen in quick succession, which means that the sequence of arrivals \edit{tends to form clusters}. This type of phenomena would be better modeled by variables that are not memoryless, so that the occurrences can have an influence on those that follow soon after \edit{and increments are not independent}.

%\textbf{Hawkes Background}:
One stochastic arrival process that captures clustering of arrivals was introduced in 1971 by \citet{hawkes1971spectra}, and is referred to as the Hawkes process. This stochastic \minrev{process} counts the number of arrivals \edit{and}, unlike the Poisson process, it self-excites. This means that when one arrival occurs, it increases the likelihood that another arrival will occur soon afterwards. The Hawkes process does so through treating both the counting process and the rate of arrivals as \minrev{coupled} stochastic processes. Because the arrival rate increases, it is treated in a general sense as the arrival ``intensity,'' which can be thought of as a representation of the excitement at that time. The higher the intensity, the more likely it is that an arrival will occur. In this setting, the number of arrivals and the arrival intensity \edit{represent the system together as a pair}.

%\textbf{Conceptual Hawkes Motivation}:
Historically, the Hawkes process has been studied \edit{predominantly} in financial settings. However, it has only recently received a significant amount of attention in broader and more general contexts. For a general overview, a review of the Hawkes process was written by \citet{Laub:2015aa}. In \edit{our work}, we are particularly interested in socially informed queueing systems, and we use these systems as a motivation for both studying the Hawkes process and applying it to queueing models. For example, in situations in which a person does not know the value of competing offers or services, \minrev{she} may decide to pursue the option that has the most other people already waiting for it. When one can't be sure of what is earned by waiting, the willingness of others to wait can often be the best indicator.

%\textbf{Other People's Examples}:
As a quick example for the sake of building intuition, consider walking past a street performer. If there is only a handful of other people watching, one may not feel a desire to stop and see the performance. However, if there is a large crowd already \edit{watching} it is more enticing to join the group and see what is happening. This is the basic \edit{motivation} of self-exciting and clustering arrival processes. Although this example is simple, the concept itself has powerful implications for service systems. Several naturally occurring examples of these systems were detailed in a recent Chicago Booth Review article, \cite{cbrarticle}. These examples include cellular companies paying employees to join the lines outside stores during product launches and pastry enthusiasts waiting hours in queue to buy baked goods from the famed Dominique Ansel \minrev{Bakery in New York}. \minrev{(The article even includes a story of a German man joining a long queue in 1947 without any knowledge of what awaited him, only to find it was for visas to the United States!)} \edit{Another example of self exciting arrivals in service settings are flight cancellations, discussed in a recent Business Insider article, \cite{airlines}. Because of widespread events like inclement weather and \minrev{information technology} infrastructure failures,  flights are often cancelled in mass. However, as that article notes, even one plane experiencing mechanical issues can cause a cluster of downstream cancellations throughout its flight legs.}

%\textbf{Our Examples}:
In this paper, we apply our results to \edit{two} main applications:  the viral nature of modern web traffic and the appeal associated with the lengths of queues for nightclubs. In socially informed internet traffic, webpages experience arrivals of users in clusters due to the contagion-like spread of information. If one user shares a webpage, others become more likely to view and share it as well. We demonstrate this through an example from \minrev{Twitter}  data \edit{and explore the impact of a click}. \edit{T}he night club example can be seen as an effect of having to pay a cover fee up front to enter the venue. Because club-goers must pay before ever seeing inside, the number of others already in queue to enter the club gives a sense of the attraction they are awaiting. \edit{In this setting we consider the managerial control problem of how quickly to admit customers to maximize earnings}. Again, in these examples the occurrence of an event or arrival of a customer increases the likelihood that another will happen soon after.

%\textbf{Model Math Description and Connections}:
We model these sort of settings through queueing systems in which the arrivals occur according to a Hawkes process and in which service times follow phase-type distributions. This general type of service allows for accurate and robust modeling while preserving key characteristics for queues, such as the Markov property. Mathematically, this work is most similar to recent work by \citet{gao2016functional} and \citet{koops2017infinite}.  \edit{Moreover, transient moments for infinite server queues with Markovian arrivals are also among the findings in \citet{koops2017infinite}, an independent and concurrent work. However the moments in \citet{koops2017infinite} are only derived for exponential service distributions, whereas we give expressions for any phase-type service distribution.  Additionally, we analyze the Hawkes/D/$\infty$ queue and give an explicit analysis for its first two moments.}  Conceptually, our motivation is most similar to  \citet{debo2012signaling}. While the model in \cite{debo2012signaling} is similar to this one in concept, it is quite different in its probabilistic structure. Rather than using a Hawkes process for the arrivals, the authors model the scenario through a Poisson process with a probability of arrivals joining or balking that increases with the length of the queue. This describes the setting well, but there are a few limitations and room for additional considerations. For example, recency plays no role in the influence of the next arrival. For queues of identical length, that model considers the most recent arrival occurring a minute ago to be equivalent to it occurring an hour ago. Additionally, because events arrive according \edit{to} a time-homogeneous Poisson process and then either join or balk, the rate at which arrivals join the queue is bounded by the overall arrival rate, a constant. This prevents any kind of ``viral'' behavior for the events, so a large influx of arrivals over a short time is unlikely to occur. By contrast, these behaviors are inherent to our model. We will explore these ideas and others after the following descriptions of this paper's composition.

 \subsection{Main Contributions of Paper}

In this paper, we provide exact \edit{expressions for the mean, variance, and covariance of the} Hawkes process driven queue for all time, in both transient and steady state. These moments are derived for general \minrev{phase-type service}; we also provide examples for hyper-exponential and Erlang service. These results are \edit{derived by exploiting} linear ordinary differential equations.   We also derive expressions for all moments \minrev{of the} queue. We verify these functions via comparisons to simulations. \edit{We also derive a partial differential equation for the moment generating function and the cumulant moment generating function for the Hawkes/PH/$\infty$ queue.  We are able to show that the solution of the potentially high dimensional PDE for the MGF can be reduced to solving one differential equation, which does not have a closed form expression except in some special cases.  Moreover, we analyze the Hawkes/D/$\infty$ queue where the service times are deterministic.  We derive exact expressions for the mean, variance, and auto-covariance of the queue length process.}  Throughout this work we show the relevance of the Hawkes process by direct comparison to the Poisson process and through novel applications. In our applications, we investigate the long run effects of the self-excitement structure\edit{,} design an optimal control problem\edit{,} and describe how to solve it numerically.

\subsection{Organization of Paper}

 \edit{The remainder of this paper is organized into three main sections. In Section~\ref{sec_Hawk}, we give an overview of results and properties in the Hawkes process literature that are relevant to this work and we then investigate the infinite server Hawkes process driven queue with deterministic service. In Section~\ref{sec_hawkphinf}, we perform the main analysis of this work, which is the \minrev{investigation} of infinite server queues with Hawkes process arrivals and phase-type distributed service. In doing so, we first provide model definitions and technical lemmas, then derive expressions for the moments of the queue, followed by the auto-covariance and moment and cumulant generating functions. In Section~\ref{sec_applications}, we apply this work to two novel settings, trending web traffic and night clubs.} To facilitate comprehension of subject-specific notation, we provide the following table of terminology. Listed by order of appearance, these terms are also stated \minrev{and defined} at their first use. Thus, this reference is simply intended as an aid for the reader. \edit{In particular, we \minrev{draw} attention to this paper's use of $\onevec{}$ to represent the vector of all ones. While such a vector may be more commonly denoted as $\textbf{e}$, we avoid that notation as these vectors are frequently used near matrix exponentials since $\textbf{v}$ is more distinct from $e$ than $\textbf{e}$ is.}\\

\begin{center}
\bgroup
\setlength{\extrarowheight}{2.5pt}
\begin{tabularx}{\textwidth}{ c | X }
Symbol & Definition    \\
  \hline
  $N_t$ & Hawkes counting process, the self-exciting point process \\
$\lambda_t$ & Hawkes process intensity, represents the excitement of the process at time $t$ \\
$\alpha$ & Hawkes process jump parameter, represents the jump in intensity upon an arrival \\
$\beta$ & Hawkes process decay parameter, governs the exponential decrease of $\lambda_t$ \\
$\lambda^*$ & Hawkes process baseline intensity \\
$\lambda_0$ & Initial value of $\lambda_t$ \\
$\lambda_\infty$ & Equal to $\frac{\beta \lambda^*}{\beta - \alpha}$, represents the limit of the mean intensity as $t \to \infty$ \\
$Q_t$ & Queueing system, where $Q_{t,i}$ is the number in phase $i$ of service at time $t$ \\
$S$ & Phase-type distribution transient state sub-generator matrix, represents the exponentially distributed rate of transitions of an entity from one phase of service to another with state 0 designated as the absorbing state for the end of the entity's service. Off diagonal elements are $\mu_{ij}$ and diagonal elements are $-\mu_i$ \\
$\mu_{ij}$ & Transition rate from phase $i$ to phase $j$ where $i \ne j$ \\
$\mu_i$  & Overall transition rate out of phase $i$ \\
$\theta$ & Queueing system initial distribution of arrivals over the $n$ phases of service \\
$\textbf{v}$ & The $n$-dimensional vector of all ones \\
$\textbf{v}_i$ & The $n$-dimensional vector of all zeros other than \edit{the value 1} at the $i^\text{th}$ element \\
 $\textbf{V}_i$ & The $n \times n$ matrix with one at $(i,i)$ and zero otherwise \\
\end{tabularx}
\egroup
\end{center}

\section{Hawkes Arrival Process} \label{sec_Hawk}

\edit{
The Hawkes process, introduced in \cite{hawkes1971spectra},} is a self-exciting point process whose arrival intensity is dependent on the point process sample path.  This is defined through the following dependence on the intensity process $\lambda_t$:
\begin{align}
\mathbb{P}( N_{t+h} - N_t = 1 | \mathcal{F}_t ) &= \lambda_t \cdot h + o(h) \\
\mathbb{P}( N_{t+h} - N_t > 1 | \mathcal{F}_t ) &=  o(h) \\
\mathbb{P}( N_{t+h} - N_t = 0 | \mathcal{F}_t ) &= 1- \lambda_t \cdot h + o(h)
\end{align}
where $\mathcal{F}_t$ is a filtration on the underlying probability space $(\Omega, \mathcal{F}, \mathbb{P})$ generated by $(N_t)_{t\geq0}$.  The arrival intensity is governed by the following stochastic dynamics:
\begin{equation}\label{hawkdyn}
d\lambda_t = \beta  ( \lambda^* - \lambda_t) dt + \alpha  dN_t .
\end{equation}

\edit{
Here} $\lambda^*$ represents \edit{an underlying stationary arrival rate called} the baseline intensity\edit{,} $\alpha > 0$ is the height of the jump in the intensity upon an arrival\edit{, and} $\beta > 0$ describes the decay of the intensity as time passes after an arrival. That is, when the number of arrivals $N_t$ increases  by one, the arrival intensity will jump \minrev{up} by amount $\alpha$, and this increases the probability of another jump occurring.  This is why the Hawkes process is called self-exciting\edit{:} its prior activity increases the likelihood of its future activity.  However, as soon as an arrival occurs the intensity begins to decay exponentially \edit{at} rate $\beta$ to the baseline intensity $\lambda^*$. Because of the jumps and the decay, the arrivals tend to cluster. If one applies Ito's lemma to the kernel function $e^{-\beta t} \lambda_t$, then one can show that
\begin{equation}\label{exphawk}
\lambda_t = \lambda^* + e^{-\beta t}  ( \lambda_0 - \lambda^*)  + \alpha \int^{t}_{0} e^{-\beta (t -s)} dN_s ,
\end{equation}
\noindent
\edit{as in \cite{da2014hawkes}, which also discusses the impact of the initial value of the intensity $\lambda_0$}. This process is known to be stable for $\alpha < \beta$, see \cite{Laub:2015aa}. \edit{Additionally, it is Markovian when conditioned on the present value of both the counting process and the intensity, which is also given in \cite{Laub:2015aa}.} For the rest of this study we will restrict our setting to this exponential kernel assumption. When we use the term ``Hawkes process'' we assume that it has such a kernel. Before proceeding with \edit{a review of relevant Hawkes process results from the literature, we motivate the use of this process by showing both its similarities and its differences} with the Poisson process.

 \subsection{Comparison to the Poisson Process}\label{hawkpoissoncomp}
In Equation~\ref{exphawk}, note that if $\alpha = 0$ and $\lambda_0 = \lambda^*$ then $\lambda_t = \lambda^*$ for all $t$. In this case, the Hawkes process is equivalent to a stationary Poisson process with rate $\lambda^*$.  However, if $\alpha = 0$ but $\lambda_0 \ne \lambda^*$ it is equivalent to a non-stationary Poisson process. So, conceptually, a Poisson process is a Hawkes process without excitement.  Furthermore, a Hawkes process with $\lambda_0 = \lambda^*$ is in essence a stationary Poisson process until the first arrival occurs. However, once an arrival occurs the intensity process jumps by an amount $\alpha$ from the initial value and then begins to decay towards the baseline rate according to the exponential decay rate $\beta$. This is demonstrated in the example in Figure~\ref{lambdaex} below. \edit{This simulation, in addition to all the others throughout this work, is constructed by use of the algorithm described in \citet{ogata1981lewis}}.

  \vspace{-.25cm}
 \begin{figure}[H]
\begin{center}	
\includegraphics[scale=.25]{./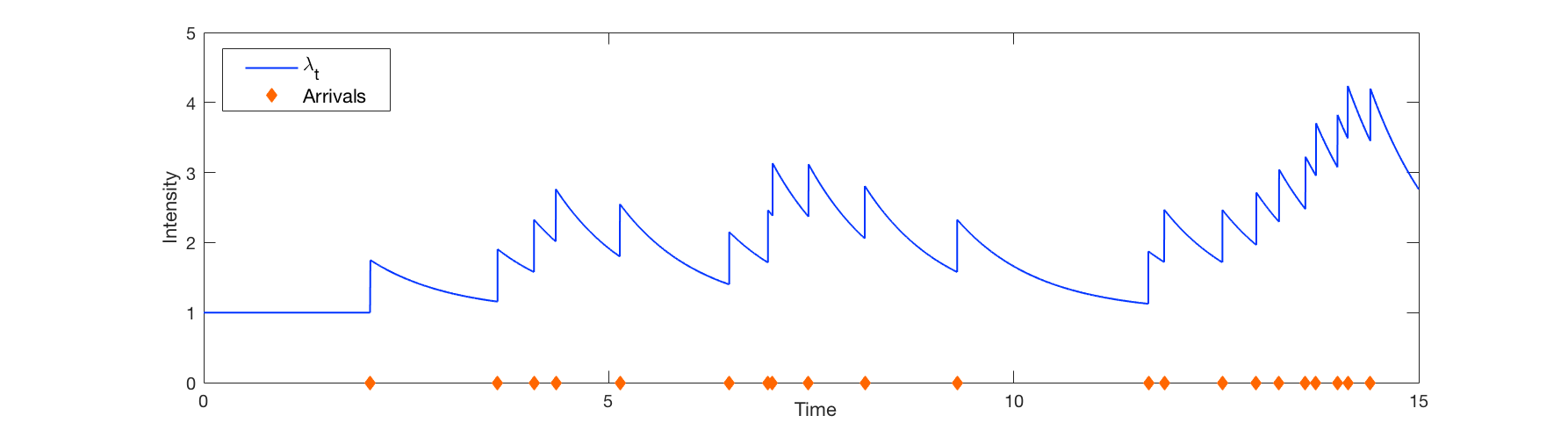}
\vspace{-.2cm}
\caption{Simulated ${\lambda_t}$, where ${\alpha = \frac 3 4}$, ${\beta = 1}$, and $\lambda^* = 1$.} \label{lambdaex}
\end{center}
\end{figure}
\vspace{-.5cm}

This example also shows another key difference between the Hawkes and Poisson processes. Because the self-excitation increases the likelihood of an arrival occurring soon after another, the Hawkes process tends to cluster arrivals together across time. This means that the variance of a Hawkes process will be larger than that of \edit{a} Poisson process, which is known to be equal to its mean. Below we demonstrate this through simulated limit distributions of the Hawkes process compared with the known Poisson probability mass function (PMF), each with the same mean.

 \vspace{-.1in}
 \begin{figure}[H]
\begin{center}	
\hspace{-.35in}
    \begin{minipage}{0.475\textwidth}
    \begin{center}
\includegraphics[scale=.24]{./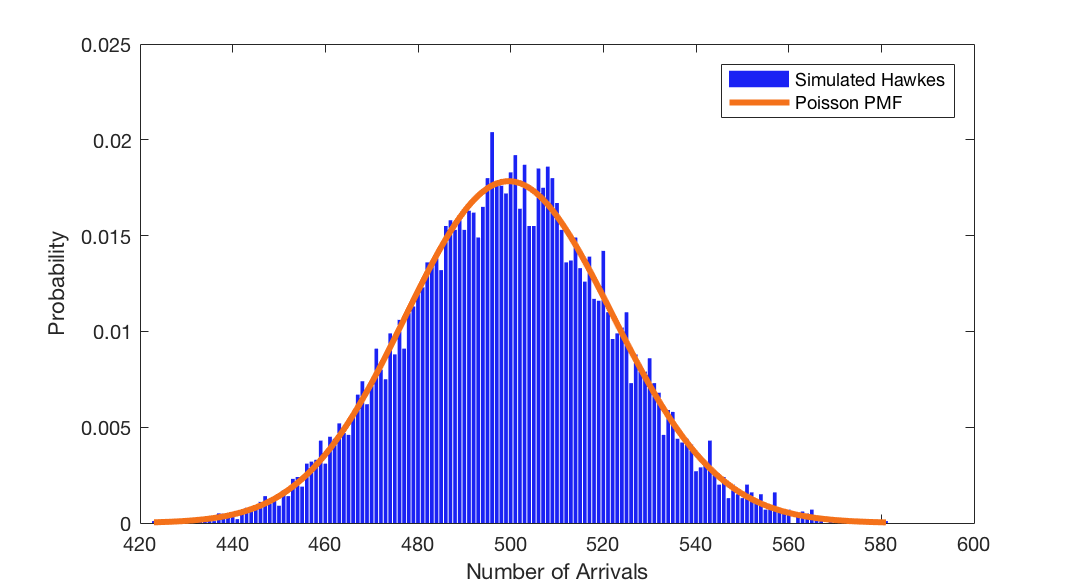}
	\end{center}
    \end{minipage}\hspace{.2in}
    \begin{minipage}{0.475\textwidth}
    \begin{center}
    \includegraphics[scale=.24]{./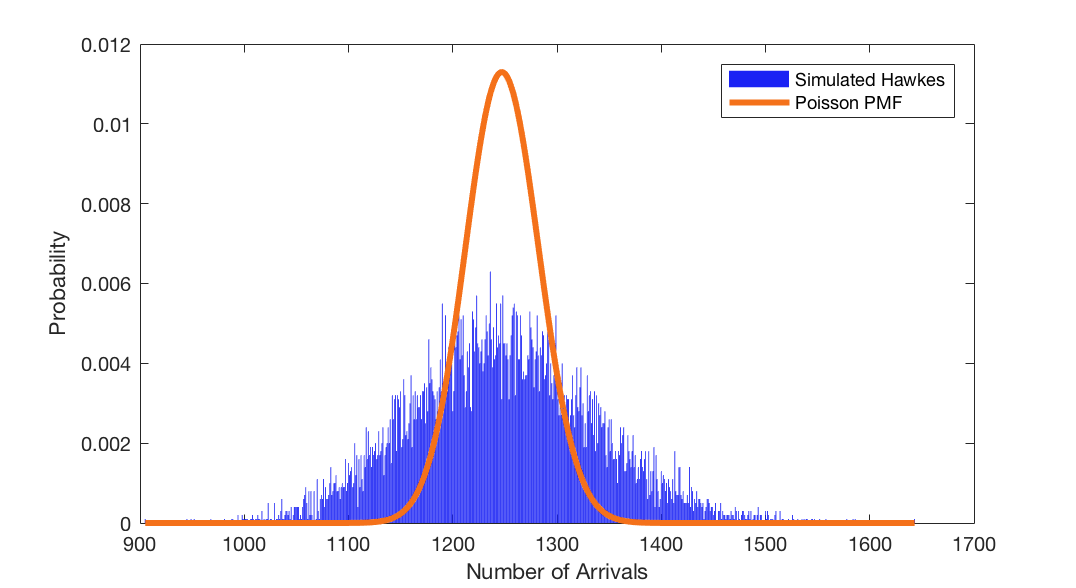}
    \end{center}
    \end{minipage}
\caption{Limit Distributions for ${\lambda^* = \beta = 1}$ and ${\alpha = 0}$ (left) and ${0.6}$ (right).}
\label{hawkesvpoisson}
\end{center}
\vspace{-.25in}
\end{figure}

The simulated results are based on 10,000 replications, each with an end time of 500. As described previously, the two processes are equivalent for $\alpha = 0$. However, as $\alpha$ increases, the similarity between the Hawkes process and the Poisson process starts to disappear. Through these examples, we observe that the Hawkes process behaves quite differently from the Poisson process since it has \edit{heavier} tails and therefore, is more variable.  Thus, this provides theoretical motivation for our following investigation.

 \subsection{\edit{Review of Relevant Hawkes Process Literature}}\label{hawkmeandyn}
 \edit{
We now review a brief selection of Hawkes process results that support our following analysis of Hawkes process driven queueing systems. These results can be found in greater detail in \citet{dassios2011dynamic, da2014hawkes, da2015clustering}, as discussed specifically after each result statement. This review is primarily focused on the transient and stationary moments of the Hawkes process, and is included both for the sake of completeness and understanding of the problem, but also so that it may be incorporated later in this work. In the first statement, Proposition~\ref{hawkgeneral}, differential equations for the moments of the Hawkes process are provided.
}
\begin{proposition}\label{hawkgeneral}
Given a Hawkes process $X_t = (\lambda_t, N_t)$ with dynamics given by Equation \ref{hawkdyn}, then we have the following differential equations for the moments of $N_t$ and $\lambda_t$,
\begin{align}
\frac{\mathrm{d}}{\mathrm{d}t}\E{N_t^m} &= \sum^{m-1}_{j=0} { m \choose j }  \E{ \lambda_t  N_t^j  } \\
\frac{\mathrm{d}}{\mathrm{d}t}\E{\lambda_t^m} &= m  \beta  \lambda^*   \E{\lambda_t^{m-1}}
-
m \beta  \E{\lambda_t^{m}}  + \sum^{m-1}_{j=0} { m \choose j }  \alpha^{m-j}  \E{ \lambda_t^{j+1} } \\
\frac{\mathrm{d}}{\mathrm{d}t}\E{\lambda_t^m  N_t^l } &= m  \beta  \lambda^*  \E{\lambda_t^{m-1}  N_t^l} - m  \beta  \E{\lambda_t^{m}  N_t^l}   + \sum_{(j,k) \in S} { m \choose j }  { l \choose k }  \alpha^{m-j}  \E{ \lambda_t^{j+1}  N_t^k}
\end{align}
where $S = (\{0, \dots, m\}\times \{0, \dots, l\})\setminus \{(m,l)\}$.
\end{proposition}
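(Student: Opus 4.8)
The plan is to exploit the fact that $X_t = (\lambda_t, N_t)$ is a piecewise-deterministic Markov process and to read the three systems of differential equations off its infinitesimal generator. Between arrivals the intensity relaxes deterministically according to $\dot{\lambda}_t = \beta(\lambda^* - \lambda_t)$, arrivals occur at the state-dependent rate $\lambda_t$, and at each arrival the pair jumps from $(\lambda_t, N_t)$ to $(\lambda_t + \alpha, N_t + 1)$. Consequently, for any test function $f(\lambda, n)$ that is $C^1$ in $\lambda$ and of polynomial growth, the generator is
\begin{equation*}
\mathcal{L}f(\lambda, n) = \beta(\lambda^* - \lambda)\,\frac{\partial f}{\partial \lambda}(\lambda, n) + \lambda\,\bigl( f(\lambda + \alpha, n + 1) - f(\lambda, n) \bigr),
\end{equation*}
which is the same identity one obtains by applying the change-of-variables (It\^o) formula for jump processes to $f(\lambda_t, N_t)$ and using that $N_t - \int_0^t \lambda_s\,\mathrm{d}s$ is a martingale. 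Dynkin's formula then reads $\frac{\mathrm{d}}{\mathrm{d}t}\E{f(\lambda_t, N_t)} = \E{\mathcal{L}f(\lambda_t, N_t)}$, and the whole proof reduces to evaluating the right-hand side for three families of test functions.

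First I would take $f(\lambda, n) = n^m$: the drift term vanishes and the jump term is $\lambda\bigl((n+1)^m - n^m\bigr) = \lambda\sum_{j=0}^{m-1}\binom{m}{j}n^j$ by the binomial theorem, giving the first equation. Next, for $f(\lambda, n) = \lambda^m$ the drift term contributes $m\beta\lambda^*\lambda^{m-1} - m\beta\lambda^m$ while the jump term expands as $\lambda\bigl((\lambda + \alpha)^m - \lambda^m\bigr) = \sum_{j=0}^{m-1}\binom{m}{j}\alpha^{m-j}\lambda^{j+1}$, which is the second equation. Finally, for $f(\lambda, n) = \lambda^m n^l$ the drift term gives $m\beta\lambda^*\lambda^{m-1}n^l - m\beta\lambda^m n^l$ and the jump term is
\begin{equation*}
\lambda\Bigl( (\lambda + \alpha)^m (n+1)^l - \lambda^m n^l \Bigr) = \sum_{j=0}^{m}\sum_{k=0}^{l}\binom{m}{j}\binom{l}{k}\alpha^{m-j}\lambda^{j+1}n^k \;-\; \lambda^{m+1}n^l,
\end{equation*}
and since the isolated $\lambda^{m+1}n^l$ is exactly the $(j,k) = (m,l)$ term of the double sum, removing it leaves precisely the sum indexed by $S = (\{0,\dots,m\}\times\{0,\dots,l\})\setminus\{(m,l)\}$. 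Taking expectations throughout yields the third equation; the degenerate case $m=0$ simply reproduces the first equation.

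The algebra above is elementary; the real work is the analytic justification, since none of $n^m$, $\lambda^m$, $\lambda^m n^l$ is bounded, so Dynkin's formula and the interchange of $\frac{\mathrm{d}}{\mathrm{d}t}$ with $\E{\cdot}$ are not immediate. I would handle this by a localization argument: stop the process at the first exit of $\lambda_t$ from a large compact set and at the $K$-th arrival epoch, apply the elementary bounded form of Dynkin's identity to the stopped process, and then let the localizing times tend to infinity. The limit passage is legitimate because the Hawkes process has finite moments of every order on each bounded time interval --- the arrival count $N_t$ is controlled through the cluster (branching) representation, and $\lambda_t$ inherits finite moments from it via Equation~\ref{exphawk} --- and these bounds also furnish the uniform integrability needed to differentiate under the expectation and to ensure that every right-hand side above is finite and continuous in $t$. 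This moment-finiteness step is the main obstacle; everything else is bookkeeping with the binomial theorem.
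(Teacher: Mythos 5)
Your proposal is correct and follows essentially the same route as the paper, which simply cites the infinitesimal-generator approach of its reference and notes that the equations follow by the binomial theorem; your generator computation, the three test functions, and the identification of the removed $(m,l)$ term all check out. The localization argument you sketch for unbounded test functions is precisely the justification the paper itself invokes later (in the remark following its Dynkin-type lemma, citing Lemma 2 of \citet{oelschlager1984martingale}), so nothing is missing.
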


\begin{proof}
\edit{
This follows directly from the approach involving the infinitesimal generator described in Sections 2.1 and 2.2 of \cite{da2014hawkes}, followed by simplification using the binomial theorem. For the first and second moments of $N_t$ and $\lambda_t$ and the first product moment, these equations are stated exactly \edit{in that work}.
}
\end{proof}

\edit{
As has been observed in the literature, the differential equations for the moments form a system of linear ordinary differential equations that have explicit solutions. We now provide the exact dynamics of the first two moments of the Hawkes process since this is of particular relevance to our later analysis. \edit{We also} define notation that will be used throughout the remainder of this work.
}

\begin{proposition}\label{hawksolveprop}
Given a Hawkes process $X_t = (\lambda_t, N_t)$ with dynamics given by Equation \ref{hawkdyn} \edit{with $\alpha < \beta$}, then the mean, variance, and covariance of $N_t$ and $\lambda_t$ are provided by the following equations for all $t \geq 0$,
\begin{align}
\E{\lambda_t} &=
\lambda_\infty + \left(\lambda_0 - \lambda_\infty\right) e^{-(\beta - \alpha)t}
\\
\E{N_t} &=
\lambda_\infty t + \frac{\lambda_0 - \lambda_\infty}{\beta - \alpha} \left(1 - e^{-(\beta - \alpha)t}\right)
\\
\Var{\lambda_t} &=
\frac{\alpha^2\lambda_\infty}{2(\beta - \alpha)} + \frac{\alpha^2(\lambda_0 - \lambda_\infty)}{\beta - \alpha}e^{-(\beta - \alpha)t} - \frac{\alpha^2(2\lambda_0 - \lambda_\infty)}{2(\beta - \alpha)}e^{-2(\beta - \alpha)t}
\\
\Var{N_t} &
=
\frac{\beta^2 \lambda_\infty}{(\beta - \alpha)^2}
t
+
\frac{\alpha^2(2\lambda_0 - \lambda_\infty)}{2(\beta - \alpha)^3}
\left(
1 - e^{-2(\beta - \alpha)t}
\right)
-
\frac{2\alpha\beta(\lambda_0 - \lambda_\infty)}{(\beta - \alpha)^2} te^{-(\beta - \alpha)t}
\nonumber
\\
&
\quad
+
\left(
\frac{\beta + \alpha}{(\beta - \alpha)^2}(\lambda_0 - \lambda_\infty)
-
\frac{2\alpha\beta}{(\beta - \alpha)^3}\lambda_\infty
\right)
(1 - e^{-(\beta - \alpha)t})
\\
\Cov{\lambda_t,N_t} &=
\left(
\frac{\alpha\lambda_\infty}{\beta - \alpha} + \frac{\alpha^2\lambda_\infty}{2(\beta - \alpha)^2}
\right)
\left(
1 - e^{-(\beta - \alpha)t}
\right)
+
\frac{\alpha^2(2\lambda_0 - \lambda_\infty)}{2(\beta - \alpha)^2}
\left(
e^{-2(\beta - \alpha)t} - e^{-(\beta - \alpha)t}
\right)
\nonumber
\\
&
\quad
+
\frac{\alpha\beta (\lambda_0 - \lambda_\infty) }{\beta - \alpha}
t e^{-(\beta - \alpha)t}
\end{align}
\edit{
where
$$
\lambda_\infty = \frac{\beta \lambda^*}{\beta - \alpha}.
$$
}
\end{proposition}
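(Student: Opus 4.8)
The plan is to solve the hierarchy of linear ordinary differential equations supplied by Proposition~\ref{hawkgeneral}, exploiting its triangular structure, and then to assemble the centered moments at the very end. First I would specialize the moment equations to the cases $m,l \in \{1,2\}$. For $m=1$ the intensity equation collapses to $\frac{\mathrm{d}}{\mathrm{d}t}\E{\lambda_t} = \beta\lambda^* - (\beta-\alpha)\E{\lambda_t}$, a scalar linear ODE whose solution, subject to the deterministic initial condition $\E{\lambda_0} = \lambda_0$, is exactly the stated formula once one recognizes $\lambda_\infty = \beta\lambda^*/(\beta-\alpha)$ as the fixed point. Substituting this into $\frac{\mathrm{d}}{\mathrm{d}t}\E{N_t} = \E{\lambda_t}$ and integrating from $\E{N_0} = 0$ produces $\E{N_t}$.

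Next I would climb to the second moments in the same cascading manner. The $\E{\lambda_t^2}$ equation becomes $\frac{\mathrm{d}}{\mathrm{d}t}\E{\lambda_t^2} = -2(\beta-\alpha)\E{\lambda_t^2} + (2\beta\lambda^* + \alpha^2)\E{\lambda_t}$, which, now that $\E{\lambda_t}$ is known, is a scalar linear ODE with exponentially decaying forcing, solved by an integrating factor with initial value $\lambda_0^2$. The product-moment equation reduces to $\frac{\mathrm{d}}{\mathrm{d}t}\E{\lambda_t N_t} = -(\beta-\alpha)\E{\lambda_t N_t} + \beta\lambda^*\E{N_t} + \alpha\E{\lambda_t} + \E{\lambda_t^2}$, whose right-hand side is entirely built from quantities already solved, so it too is integrated (with $\E{\lambda_0 N_0} = 0$). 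Finally $\frac{\mathrm{d}}{\mathrm{d}t}\E{N_t^2} = \E{\lambda_t} + 2\E{\lambda_t N_t}$ is integrated directly from $\E{N_0^2} = 0$. The claimed variances and covariance then follow from $\Var{\lambda_t} = \E{\lambda_t^2} - \E{\lambda_t}^2$, $\Cov{\lambda_t,N_t} = \E{\lambda_t N_t} - \E{\lambda_t}\E{N_t}$, and $\Var{N_t} = \E{N_t^2} - \E{N_t}^2$, after collecting terms according to their exponential rate.

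The main obstacle is computational rather than conceptual: the forcing terms in the $\E{N_t}$, $\E{\lambda_t N_t}$, and especially $\E{N_t^2}$ equations contain the factor $e^{-(\beta-\alpha)t}$, which coincides with the decay rate of the homogeneous solutions, so the integrating-factor integrals generate secular terms of the form $t\,e^{-(\beta-\alpha)t}$; these must be tracked carefully to reproduce the $te^{-(\beta-\alpha)t}$ contributions in the stated $\Var{N_t}$ and $\Cov{\lambda_t,N_t}$. Likewise the constant, $e^{-(\beta-\alpha)t}$, and $e^{-2(\beta-\alpha)t}$ pieces must be grouped precisely, and the cancellation of the $\E{\lambda_t}^2$ and $\E{\lambda_t}\E{N_t}$ cross terms against portions of the raw second moments has to be verified term by term. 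I would sanity-check the final expressions at $t=0$, where every centered moment must vanish (in particular the stated $\Var{\lambda_t}$ collapses to $0$ because $\lambda_0$ is deterministic), and as $t\to\infty$, where the surviving pieces should recover the known stationary intensity variance $\tfrac{\alpha^2\lambda_\infty}{2(\beta-\alpha)}$ and the asymptotic linear growth rate $\tfrac{\beta^2\lambda_\infty}{(\beta-\alpha)^2}$ of $\Var{N_t}$.
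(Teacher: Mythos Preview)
Your proposal is correct and follows precisely the approach the paper itself indicates: the paper's proof simply cites \cite{dassios2011dynamic,da2015clustering} and notes that the result is obtained ``by solving the corresponding ODE system stated above [in] Proposition~\ref{hawkgeneral},'' which is exactly the cascading integration you describe. Your specialized ODEs for $\E{\lambda_t}$, $\E{N_t}$, $\E{\lambda_t^2}$, $\E{\lambda_t N_t}$, and $\E{N_t^2}$ are all correct, and your remarks about resonant forcing producing the $t e^{-(\beta-\alpha)t}$ terms and about the $t=0$ and $t\to\infty$ sanity checks are apt.
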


\begin{proof}
\edit{The proof of this result can be found in Section 3.4 of \cite{dassios2011dynamic} (as a particular case where $\rho = 0$) and in Section 3.2 of \cite{da2015clustering}, or by solving the corresponding ODE system stated above \minrev{Proposition}~\ref{hawkgeneral}.
}
\end{proof}

By further observation of Proposition \ref{hawksolveprop} \edit{ or simply by further review of the references in this section}, the steady-state behavior of various Hawkes process statistics \edit{is also available}. \edit{These expressions are stated in} the following corollary.

\begin{corollary}\label{hawklimitcor}
Given a Hawkes process $X_t = (\lambda_t, N_t)$ with dynamics given by Equation \ref{hawkdyn} \edit{with $\alpha < \beta$}, then \edit{ the steady state values of the mean and variance of the intensity and of the covariance between the intensity and the counting process are as follows:}
\begin{eqnarray}
\lim_{t \to \infty} \E{\lambda_t} &=& \frac{\beta \lambda^*}{\beta - \alpha} = \lambda_\infty, \\
\lim_{t \to \infty} \Var{\lambda_t} &=&  \frac{\alpha^2  \lambda_\infty  }{2(\beta-\alpha)}, \\
\lim_{t \to \infty} \Cov{\lambda_t , N_t } &=& \frac{\alpha\lambda_\infty}{\beta - \alpha} + \frac{\alpha^2\lambda_\infty}{2(\beta - \alpha)^2}.
\end{eqnarray}
\end{corollary}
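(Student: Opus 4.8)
The plan is to obtain all three limits directly from the closed-form transient expressions in Proposition~\ref{hawksolveprop} by letting $t \to \infty$. The key observation is that the stability hypothesis $\alpha < \beta$ makes $\beta - \alpha > 0$, so every term appearing in those expressions that carries an exponential factor --- namely $e^{-(\beta-\alpha)t}$, $e^{-2(\beta-\alpha)t}$, and $te^{-(\beta-\alpha)t}$ --- tends to $0$ as $t \to \infty$ (the last one because polynomial growth is dominated by exponential decay). Everything else in those formulas is constant in $t$, so the limits are read off immediately.

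Carrying this out: in $\E{\lambda_t} = \lambda_\infty + (\lambda_0 - \lambda_\infty)e^{-(\beta-\alpha)t}$ the second term vanishes, leaving $\lambda_\infty$. In the expression for $\Var{\lambda_t}$, the two exponential terms vanish, leaving the constant $\frac{\alpha^2 \lambda_\infty}{2(\beta-\alpha)}$. In $\Cov{\lambda_t,N_t}$, the factor $(1 - e^{-(\beta-\alpha)t})$ tends to $1$, while $(e^{-2(\beta-\alpha)t} - e^{-(\beta-\alpha)t})$ and $te^{-(\beta-\alpha)t}$ tend to $0$; hence the limit is $\frac{\alpha\lambda_\infty}{\beta-\alpha} + \frac{\alpha^2\lambda_\infty}{2(\beta-\alpha)^2}$. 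These are exactly the three claimed identities.

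There is essentially no obstacle here; the only point requiring a moment's care is the term $te^{-(\beta-\alpha)t}$, where one should record explicitly that $\beta - \alpha > 0$ is precisely what forces it to vanish. As an alternative derivation and sanity check, one could instead work from the linear ODE system preceding Proposition~\ref{hawkgeneral}: setting $\frac{\mathrm{d}}{\mathrm{d}t}\E{\lambda_t} = 0$ gives $\beta\lambda^* = (\beta-\alpha)\E{\lambda_t}$, i.e.\ $\lambda_\infty$; setting $\frac{\mathrm{d}}{\mathrm{d}t}\E{\lambda_t^2} = 0$ and substituting $\lim_{t\to\infty}\E{\lambda_t} = \lambda_\infty$ yields the stationary second moment and hence the stationary variance; and the $\E{\lambda_t N_t}$ equation, combined with these, yields the stationary covariance. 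One must avoid applying this fixed-point reasoning to $\E{N_t}$ or $\E{N_t^2}$, which grow without bound, but $\Cov{\lambda_t,N_t}$ does stabilize, so its stationary equation is legitimate once the stationary moments of $\lambda_t$ are in hand. Either route delivers the corollary.
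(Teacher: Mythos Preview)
Your proposal is correct and follows the same route the paper indicates: the paper does not give a formal proof but simply remarks that the corollary follows ``by further observation of Proposition~\ref{hawksolveprop}'' (or from the cited references), which is exactly your limiting argument. Your write-up is in fact more detailed than what the paper supplies, and the optional ODE fixed-point check is a reasonable addition.
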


\edit{In Proposition~\ref{hawksolveprop} and Corollary~\ref{hawklimitcor}, we assume that $\alpha < \beta$, which is a known stability condition in the literature, as detailed in \cite{Laub:2015aa}. However, we can also consider the case where $\alpha \geq \beta$ and investigate the behavior of the system through its transient mean values. This is performed in the following corollary.}

\begin{corollary}
Given a Hawkes process $X_t = (\lambda_t, N_t)$ with dynamics given by Equation \ref{hawkdyn} with $\alpha \geq \beta$, the transient mean intensity and transient mean of the counting process for $t \geq 0$ are
\begin{align}
\E{\lambda_t}
&=
\frac{\beta \lambda^*}{\alpha - \beta}\left(e^{(\alpha - \beta)t} - 1\right)
+
\lambda_0 e^{(\alpha - \beta)t}
\\
\E{N_t}
&=
\left(\frac{\beta\lambda^*}{(\alpha - \beta)^2} + \frac{\lambda_0 }{\alpha - \beta}\right)\left(e^{(\alpha - \beta)t} - 1\right)
-
\frac{\beta \lambda^*}{\alpha - \beta}t
\end{align}
when $\alpha > \beta$, and
\begin{align}
\E{\lambda_t}
&=
\beta \lambda^* t + \lambda_0
\\
\E{N_t}
&=
\frac{\beta\lambda^*}{2}t^2 + \lambda_0 t
\end{align}
when $\alpha = \beta$.
\end{corollary}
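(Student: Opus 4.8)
The plan is to solve directly the linear ordinary differential equations for the first moments that are already recorded in Proposition~\ref{hawkgeneral}, after noting that the derivation of those equations nowhere invokes the stability condition $\alpha < \beta$: that hypothesis enters only when one rewrites the solutions in terms of $\lambda_\infty = \beta\lambda^*/(\beta - \alpha)$, which is precisely what degenerates when $\alpha \geq \beta$. So the moment ODEs remain valid verbatim, and all that is left is to integrate them with the right sign conventions.

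First I would specialize the second identity of Proposition~\ref{hawkgeneral} to $m = 1$, which collapses to
$$
\frac{\mathrm{d}}{\mathrm{d}t}\E{\lambda_t} = \beta\lambda^* - (\beta - \alpha)\E{\lambda_t} = \beta\lambda^* + (\alpha - \beta)\E{\lambda_t}.
$$
For $\alpha > \beta$ this is a first-order linear ODE with constant coefficients; multiplying by the integrating factor $e^{-(\alpha - \beta)t}$ and using $\E{\lambda_0} = \lambda_0$ yields the stated formula for $\E{\lambda_t}$. Next I would specialize the first identity of Proposition~\ref{hawkgeneral} to $m = 1$, which reads $\frac{\mathrm{d}}{\mathrm{d}t}\E{N_t} = \E{\lambda_t}$, and integrate the expression just obtained from $0$ to $t$ using the convention $N_0 = 0$. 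The only integral required is $\int_0^t e^{(\alpha - \beta)s}\,\mathrm{d}s = (e^{(\alpha - \beta)t} - 1)/(\alpha - \beta)$, and regrouping the resulting terms gives the claimed formula for $\E{N_t}$.

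The case $\alpha = \beta$ is the one point that genuinely requires separate handling, since there the coefficient of $\E{\lambda_t}$ vanishes and the homogeneous solution is no longer exponential. Here the ODE reduces to $\frac{\mathrm{d}}{\mathrm{d}t}\E{\lambda_t} = \beta\lambda^*$, so $\E{\lambda_t} = \beta\lambda^* t + \lambda_0$, and a further integration of $\frac{\mathrm{d}}{\mathrm{d}t}\E{N_t} = \E{\lambda_t}$ with $N_0 = 0$ gives $\E{N_t} = \frac{\beta\lambda^*}{2}t^2 + \lambda_0 t$. As a consistency check one can also recover these as the $\alpha \to \beta$ limits of the $\alpha > \beta$ formulas, either by l'H\^opital's rule or by Taylor-expanding $e^{(\alpha - \beta)t}$. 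There is no substantive obstacle in this argument; the only things demanding care are the bookkeeping of signs when passing between $\beta - \alpha$ and $\alpha - \beta$, and the explicit remark that Proposition~\ref{hawkgeneral} continues to hold without the stability assumption so that we are entitled to reuse it here.
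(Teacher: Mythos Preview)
Your proposal is correct and is precisely the natural derivation: the paper does not supply an explicit proof for this corollary, but the formulas follow immediately from solving the first-moment ODEs of Proposition~\ref{hawkgeneral} (which, as you observe, do not require $\alpha<\beta$), exactly as you outline. This is the same route used for Proposition~\ref{hawksolveprop} in the stable case, merely rewritten so that the denominators $\alpha-\beta$ are positive or, when $\alpha=\beta$, absent.
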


As is stated in the stability condition, we see that the limits of these functions as $t$ goes to infinity diverge for $\alpha \geq \beta$. The effect of the relationship of $\alpha$ and $\beta$ on the system can be observed in the following graph.

\edit{
\begin{figure}[H]
	\begin{center}
\includegraphics[width=.6\textwidth]{./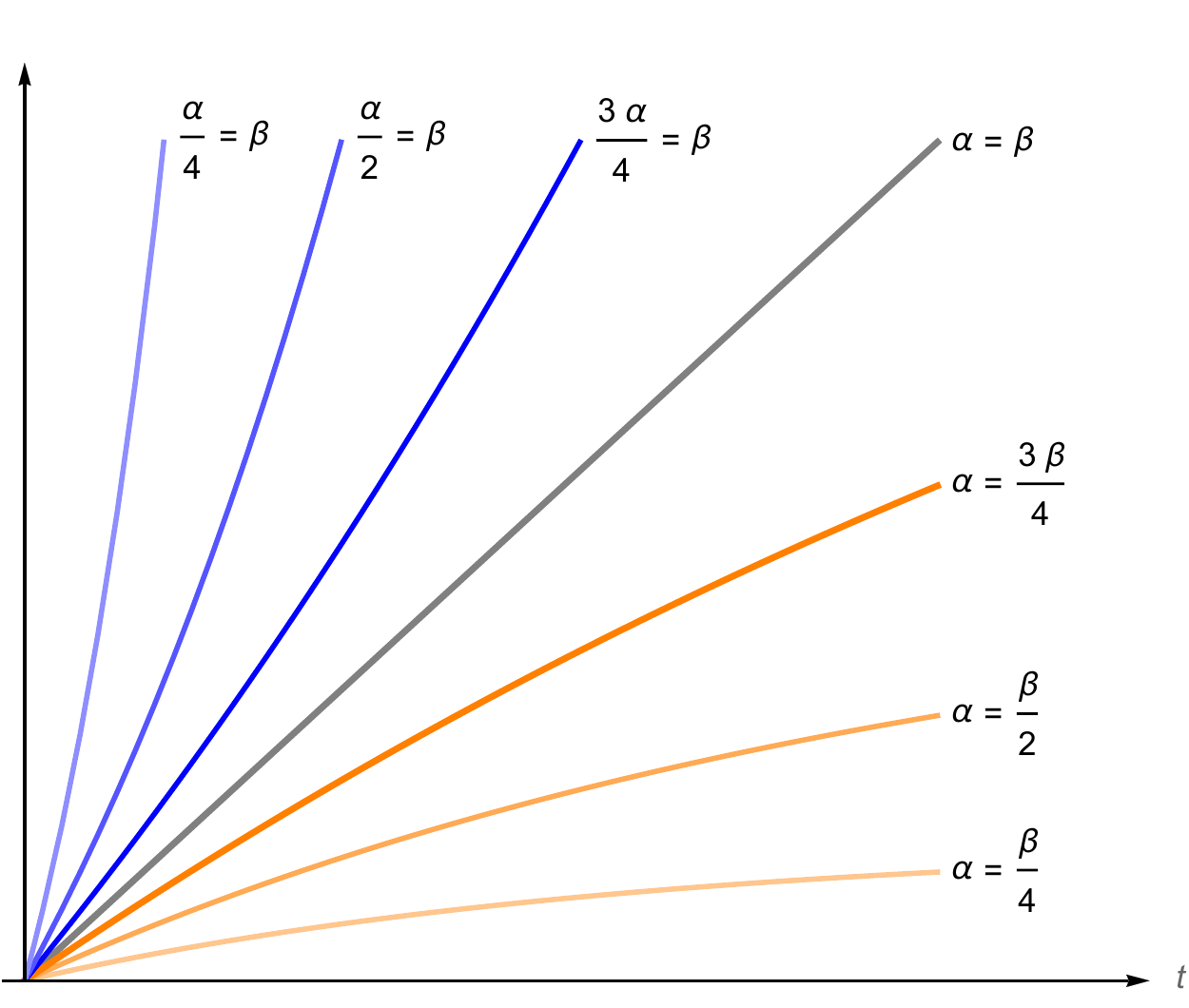}
\caption{Transient Mean Intensity for ${\alpha < \beta}$, ${\alpha = \beta}$, and ${\alpha > \beta}$.} \label{lambdaplots}
\end{center}
\end{figure}
}
\edit{For the majority of this work we will consider settings in which the arrival process is stable and so we will assume $\alpha < \beta$. However, there are settings in which the transient behavior of the unstable arrival process is of interest, and so in our analysis of the queueing system we will also explore the mean behavior of queues under such arrival conditions.
}

\subsection{$Hawkes/D/\infty$ Queue} \label{hawkesDinf}

\edit{Before moving on to the phase-type distributed service systems, we will first investigate the deterministic service setting. Since we have a good understanding about the Hawkes process itself, we can leverage our knowledge to analyze the $Hawkes/D/\infty$ queue where D is deterministic and is equal to the exact amount of time each customer spends in service.  We exploit the fact that the $Hawkes/D/\infty$ queue can be written as the difference between the Hawkes process evaluated at time $t$ and the Hawkes process evaluated at time $t-D$ i.e. }
  \begin{equation}
  Q_t = N_t - N_{t-D}.
    \end{equation}
   This representation of the $Hawkes/D/\infty$ queue leads us to a theorem that provides explicit expressions for the mean, variance, and auto-covariance of the $Hawkes/D/\infty$ queueing process.  However, before we state the result, we need a lemma that describes the transient auto-covariance of the Hawkes process.  This lemma will be extremely useful for our future calculations of other quantities of interest for the $Hawkes/D/\infty$ queue.

    \begin{lemma}\label{cdeflemma}
    Let $N_t$ be a Hawkes process with dynamics given by Equation~\ref{hawkdyn} with $\alpha < \beta$ and suppose $N_t$ is initialized at zero.  If we define $ \mathcal{C}(t, \tau)$ as the following function
     \begin{equation}
 \mathcal{C}(t, \tau) \equiv \mathrm{Cov}[ N_t, N_{t-\tau} ],
    \end{equation}
    then
     \begin{align}
     &
  \mathcal{C}(t, \tau) =
  \frac{
\alpha
\left(1 - e^{-(\beta - \alpha)  \tau }\right)
   }
   {2 (\beta -\alpha )^3}
\left(
(2 \beta -\alpha ) \lambda _{\infty}
-
2
e^{-(\beta - \alpha) (t - \tau )}
\left(
\alpha\lambda_0
+
\beta(\lambda_\infty - \lambda_0)(\beta - \alpha) (t - \tau)
+
(\beta -\alpha ) \lambda _{\infty }
\right)
\right)
\nonumber
\\
&
\quad
+
\left(
\lambda_\infty
+
\frac{2\alpha\lambda_\infty}{\beta - \alpha}
+
\frac{\alpha^2\lambda_\infty}{(\beta - \alpha)^2}
\right)
(t-\tau)
+
\frac{\alpha^2(2\lambda_0 - \lambda_\infty)}{2(\beta - \alpha)^3}
\left(
1 - e^{-(\beta - \alpha)(2t-\tau)}
\right)
-
\frac{2\alpha\beta(\lambda_0 - \lambda_\infty)}{(\beta - \alpha)^2}
\nonumber
\\
&
\quad
\cdot
(t-\tau)e^{-(\beta - \alpha)(t-\tau)}
+
\left(
\frac{\beta + \alpha}{(\beta - \alpha)^2}(\lambda_0 - \lambda_\infty)
-
\frac{2\alpha\beta}{(\beta - \alpha)^3}\lambda_\infty
\right)
(1 - e^{-(\beta - \alpha)(t-\tau)})
    \end{align}
for all $t \geq \tau \geq 0$; otherwise $\mathcal{C}(t,\tau) = 0$.
\end{lemma}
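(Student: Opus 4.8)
The plan is to reduce $\mathcal{C}(t,\tau)=\Cov{N_t,N_{t-\tau}}$ for $t\geq\tau\geq 0$ to the transient first- and second-order statistics of the Hawkes process at the single time $t-\tau$, which are already given in Proposition~\ref{hawksolveprop}. Throughout, $\lambda_0$ denotes the initial intensity and $N_0=0$ as assumed. The first step is to split the covariance at the earlier time point: since $N_{t-\tau}$ is $\mathcal{F}_{t-\tau}$-measurable,
\[
\mathcal{C}(t,\tau)=\Cov{N_{t-\tau},N_{t-\tau}}+\Cov{N_t-N_{t-\tau},N_{t-\tau}}=\Var{N_{t-\tau}}+\Cov{N_t-N_{t-\tau},N_{t-\tau}}.
\]
The case $t<\tau$ is immediate, since then $N_{t-\tau}\equiv 0$ and the covariance vanishes.

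For the cross term I would invoke the fact, noted in the excerpt, that $(N_t,\lambda_t)$ is a time-homogeneous Markov process: conditionally on $\mathcal{F}_{t-\tau}$, the increment process $(N_{t-\tau+u}-N_{t-\tau})_{u\geq 0}$ is itself a Hawkes process with the same $\alpha,\beta,\lambda^*$, started from intensity $\lambda_{t-\tau}$ and with counting process $0$, and its law depends on $\mathcal{F}_{t-\tau}$ only through $\lambda_{t-\tau}$. Applying the mean-of-$N$ formula of Proposition~\ref{hawksolveprop} to this restarted process (with time $\tau$ and initial intensity $\lambda_{t-\tau}$) gives
\[
\E{N_t-N_{t-\tau}\mid\mathcal{F}_{t-\tau}}=\lambda_\infty\tau+\frac{\lambda_{t-\tau}-\lambda_\infty}{\beta-\alpha}\left(1-e^{-(\beta-\alpha)\tau}\right),
\]
which is affine in $\lambda_{t-\tau}$. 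Since the conditional covariance of $N_t-N_{t-\tau}$ with the $\mathcal{F}_{t-\tau}$-measurable quantity $N_{t-\tau}$ is zero, the law of total covariance yields
\[
\Cov{N_t-N_{t-\tau},N_{t-\tau}}=\frac{1-e^{-(\beta-\alpha)\tau}}{\beta-\alpha}\,\Cov{\lambda_{t-\tau},N_{t-\tau}},
\]
and hence $\mathcal{C}(t,\tau)=\Var{N_{t-\tau}}+\frac{1-e^{-(\beta-\alpha)\tau}}{\beta-\alpha}\Cov{\lambda_{t-\tau},N_{t-\tau}}$.

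The last step is to substitute the closed forms for $\Var{N_{t-\tau}}$ and $\Cov{\lambda_{t-\tau},N_{t-\tau}}$ from Proposition~\ref{hawksolveprop} (replacing $t$ by $t-\tau$) and collect terms, using $2(\beta-\alpha)(t-\tau)+(\beta-\alpha)\tau=(\beta-\alpha)(2t-\tau)$ to recognize the mixed exponential $e^{-(\beta-\alpha)(2t-\tau)}$ in the stated formula. The conceptual content here is short; I expect the main obstacle to be purely the algebraic bookkeeping in this final step, namely combining the polynomial-times-exponential terms coming from $\Var{N_{t-\tau}}$ with the $(1-e^{-(\beta-\alpha)\tau})/(\beta-\alpha)$ multiple of $\Cov{\lambda_{t-\tau},N_{t-\tau}}$ so that the coefficients match the displayed expression exactly. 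As a fallback should the Markov-restart argument require more justification, one can instead fix $\tau$ and, from $dN_t=\lambda_t\,dt+dM_t$ and $d\lambda_t=(\beta\lambda^*-(\beta-\alpha)\lambda_t)\,dt+\alpha\,dM_t$ with $M_t$ the compensated Hawkes martingale, derive for $t>t-\tau$ the coupled linear ODEs $\frac{\mathrm{d}}{\mathrm{d}t}\E{\lambda_t N_{t-\tau}}=\beta\lambda^*\E{N_{t-\tau}}-(\beta-\alpha)\E{\lambda_t N_{t-\tau}}$ and $\frac{\mathrm{d}}{\mathrm{d}t}\E{N_t N_{t-\tau}}=\E{\lambda_t N_{t-\tau}}$, with the values at time $t-\tau$ supplied by Proposition~\ref{hawksolveprop}, solve them in turn, and subtract $\E{N_t}\E{N_{t-\tau}}$.
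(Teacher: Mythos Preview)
Your proposal is correct and follows essentially the same approach as the paper: condition on $\mathcal{F}_{t-\tau}$, use the Markov restart together with the mean formula from Proposition~\ref{hawksolveprop} to obtain the affine conditional expectation of the increment, and then substitute the closed forms for $\Var{N_{t-\tau}}$ and $\Cov{\lambda_{t-\tau},N_{t-\tau}}$. Your packaging via the law of total covariance is slightly more direct than the paper's route through the raw moment $\E{N_tN_{t-\tau}}$, but both reduce to the identical identity $\mathcal{C}(t,\tau)=\Var{N_{t-\tau}}+\frac{1-e^{-(\beta-\alpha)\tau}}{\beta-\alpha}\Cov{\lambda_{t-\tau},N_{t-\tau}}$ before the final substitution.
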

    \begin{proof}
    To see this, we manipulate the definition of the auto-covariance to find an expression in terms of other known functions. Starting from the definition of covariance, we have
    \begin{align*}
    \Cov{N_t, N_{t-\tau}}
    &=
    \E{N_t N_{t-\tau}} - \E{N_t}\E{ N_{t-\tau}}
    \end{align*}
    and by Proposition~\ref{hawksolveprop} we have expressions for $\E{N_t}$ and $\E{N_{t-\tau}}$. Thus, we focus on $\E{N_t N_{t-\tau}}$. However, for brevity's sake we do not yet substitute these known expressions into the equation. By the tower property, we have that
    \begin{align*}
     \mathcal{C}(t, \tau)  &=
    \E{\E{N_t N_{t-\tau}\mid \mathcal{F}_{t-\tau}}} - \E{N_t}\E{ N_{t-\tau}}
    \end{align*}
    where $\mathcal{F}_{t-\tau}$ is the filtration of the Hawkes process up to time $t - \tau$. Through this conditioning, $N_{t - \tau}$ is known in the inner expectation, and so we can replace $\E{\E{N_t N_{t-\tau}\mid \mathcal{F}_{t-\tau}}}$ with $\E{\E{N_t \mid \mathcal{F}_{t-\tau}}N_{t-\tau}}$. Then, again by  Proposition~\ref{hawksolveprop} we have that $\E{N_t \mid \mathcal{F}_{t-\tau}} = \lambda_\infty \tau + \frac{\lambda_{t - \tau} - \lambda_\infty}{\beta - \alpha}\left(1 - e^{-(\beta - \alpha)\tau}\right) + N_{t - \tau}  $. Making use of this, we now have that
    \begin{align*}
     \mathcal{C}(t, \tau) &=
    \lambda_\infty \tau\E{N_{t-\tau}}
    +
    \E{\lambda_{t - \tau}N_{t-\tau}}\frac{1 - e^{-(\beta - \alpha)\tau}}{\beta - \alpha}
    -
    \frac{\lambda_\infty}{\beta - \alpha}\E{N_{t - \tau}}
            \\
    &
    \quad
    \cdot
    \left(1 - e^{-(\beta - \alpha)\tau}\right)
    +
    \E{N_{t - \tau}^2}
    -
    \E{N_t}\E{ N_{t-\tau}},
    \end{align*}
    and by the definitions of covariance and variance this is equivalent to
    \begin{align*}
     \mathcal{C}(t, \tau) &=
    \lambda_\infty \tau \E{N_{t-\tau}}
    +
    \frac{\Cov{\lambda_{t-\tau},N_{t-\tau}} + \E{\lambda_{t-\tau}}\E{N_{t-\tau}}}{\beta - \alpha}\left(1-e^{-(\beta - \alpha)\tau}\right)
    \\
    &
    \quad
    -
    \frac{\lambda_\infty}{\beta - \alpha}\E{N_{t-\tau}}\left(1-e^{-(\beta - \alpha)\tau}\right)
    -
    \E{N_t}\E{N_{t-\tau}}
    +
    \Var{N_{t-\tau}}
    +
    \E{N_{t-\tau}}^2.
    \end{align*}
    Here we can recognize that each term in this expression has a known form from Proposition~\ref{hawksolveprop}. Hence, by substituting these expressions and simplifying, we achieve the stated result.
    \end{proof} 

     \vspace{-.1in}
 \begin{figure}[h]
\begin{center}	
\includegraphics[width=.6\textwidth]{./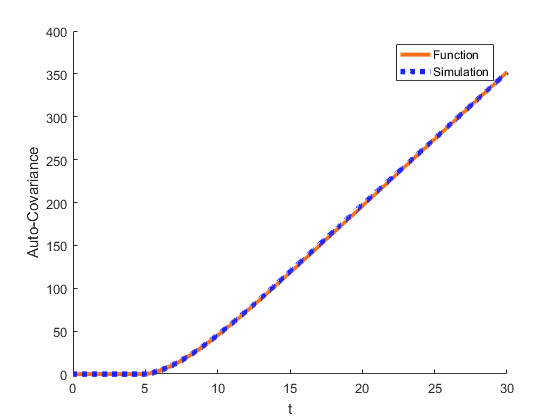}
\caption{Auto-covariance of the Hawkes Process with ${D} = 5$, ${\lambda}^* = 1$, ${\alpha} = \frac 3 4$, and ${\beta} = \frac 5 4$.}
\label{hawkesautocofig}
\end{center}
\end{figure}

    With the expression for the transient auto-covariance of the Hawkes process in hand, we can now give explicit forms of the mean, variance, and auto-covariance of the $Hawkes/D/\infty$ queue.
    \begin{theorem}\label{hawkesDinfthem}
    The transient mean of the $Hawkes/D/\infty$ when $\alpha < \beta$ is given by the following expression
      \begin{equation}
 \mathbb{E}[ Q_t ] = \begin{cases}
\lambda_\infty t + \frac{\lambda_0 - \lambda_\infty}{\beta - \alpha} \left(1 - e^{-(\beta - \alpha)t}\right)
& \text{if $t \leq D $},\\
\lambda_\infty D + \frac{\lambda_0 - \lambda_\infty}{\beta - \alpha} \left( e^{-(\beta - \alpha)(t-D)} -  e^{-(\beta - \alpha)t}\right)
& \text{if $t > D $}.
\end{cases}
    \end{equation}
    Thus, in steady state the mean queue length is
          \begin{equation}
 \mathbb{E}[ Q_{\infty} ] = \lambda_\infty D .
    \end{equation}
    Moreover, the transient variance of the $Hawkes/D/\infty$ queue is given by the following expression
        \begin{equation}
 \mathrm{Var}[ Q_t ] = \begin{cases}
 \mathcal{C}(t,0)
& \text{if $t \leq D $},\\
   \mathcal{C}(t,0) +   \mathcal{C}(t-D,0) - 2 \, \mathcal{C}(t,D)
& \text{if $t > D $}.
\end{cases}
    \end{equation}
    Lastly, the transient auto-covariance of the $Hawkes/D/\infty$ queue is given by the following expression when $\tau \geq D$,
\begin{equation}
     \mathrm{Cov}[ Q_t, Q_{t-\tau} ] =
     \begin{cases}
     0 & \text{if $t \leq \tau$,}\\
     \mathcal{C}(t,\tau) - \mathcal{C}(t-D,\tau - D) & \text{if $\tau <t \leq \tau + D$}\\
     \mathcal{C}(t,\tau) +   \mathcal{C}(t-D,\tau) - \mathcal{C}(t,\tau+D) - \mathcal{C}(t-D, \tau - D) & \text{if $\tau + D < t$}
     \end{cases}
    \end{equation}
  \noindent  and when $ \tau < D$, then
        \begin{equation}
     \mathrm{Cov}[ Q_t, Q_{t-\tau} ] =
    \begin{cases}
     0 & \text{if $t \leq \tau$,}\\
     \mathcal{C}(t,\tau) & \text{if $\tau < t \leq D$,}\\
     \mathcal{C}(t,\tau) - \mathcal{C}(t - \tau, D -\tau ) & \text{if $D <t \leq \tau + D$}\\
     \mathcal{C}(t,\tau) +   \mathcal{C}(t-D,\tau) - \mathcal{C}(t,\tau+D) - \mathcal{C}(t-\tau, D- \tau ) & \text{if $\tau + D < t$.}
     \end{cases}
     \end{equation}
    \end{theorem}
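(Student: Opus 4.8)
The plan is to exploit the decomposition $Q_t = N_t - N_{t-D}$ together with the convention that $N_s \equiv 0$ for $s \le 0$ (recall $N$ is initialized at zero), so that every quantity of interest becomes a linear combination of quantities about the Hawkes process alone, all of which are already in hand from Proposition~\ref{hawksolveprop} and Lemma~\ref{cdeflemma}. For the mean, linearity of expectation gives $\E{Q_t} = \E{N_t} - \E{N_{t-D}}$; when $t \le D$ the second term vanishes, leaving $\E{N_t}$, and when $t > D$ we substitute the expression for $\E{N_t}$ from Proposition~\ref{hawksolveprop} at both $t$ and $t-D$ and simplify. Letting $t \to \infty$ in the $t > D$ branch kills the two exponential terms and leaves $\lambda_\infty D$, which is the claimed steady-state mean.

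For the variance, bilinearity of covariance yields $\Var{Q_t} = \Var{N_t} + \Var{N_{t-D}} - 2\Cov{N_t, N_{t-D}}$. Recognizing $\Var{N_s} = \mathcal{C}(s,0)$ and $\Cov{N_t, N_{t-D}} = \mathcal{C}(t,D)$ from Lemma~\ref{cdeflemma}, and again using $N_{t-D} \equiv 0$ when $t \le D$ (so that only the $\mathcal{C}(t,0)$ term survives), produces exactly the two stated cases.

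The auto-covariance is the main computation. Expanding $\Cov{Q_t, Q_{t-\tau}} = \Cov{N_t - N_{t-D},\, N_{t-\tau} - N_{t-\tau-D}}$ by bilinearity gives the four terms $\Cov{N_t,N_{t-\tau}} - \Cov{N_t,N_{t-\tau-D}} - \Cov{N_{t-D},N_{t-\tau}} + \Cov{N_{t-D},N_{t-\tau-D}}$. Each term has the form $\Cov{N_a,N_b}$; if $\min(a,b) < 0$ it is zero by the initialization, and otherwise it equals $\mathcal{C}(\max(a,b),\, |a-b|)$ by Lemma~\ref{cdeflemma}. The bookkeeping then splits first on whether $\tau \ge D$ or $\tau < D$, which determines the ordering of $t-D$ versus $t-\tau$ in the third term (so that it is rewritten as $\mathcal{C}(t-D,\tau-D)$ or as $\mathcal{C}(t-\tau,D-\tau)$, respectively), and then on the position of $t$ relative to $\tau$, $D$, and $\tau + D$, which determines which of the shifted times are nonnegative and hence how many of the four terms survive. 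Checking that each region matches the corresponding line of the theorem statement completes the proof.

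I expect the only real obstacle to be organizational rather than analytic: one must keep the four covariance terms, their time arguments, and the regions on which each shifted counting process vanishes all straight across the case split. The key point to be careful about is that each $\Cov{N_a,N_b}$ must be rewritten with the larger time index first, so that the lag argument passed to $\mathcal{C}$ is nonnegative and Lemma~\ref{cdeflemma} applies verbatim; beyond that, all simplification is already packaged inside $\mathcal{C}(t,\tau)$ and Proposition~\ref{hawksolveprop}.
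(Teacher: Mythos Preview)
Your proposal is correct and follows essentially the same approach as the paper: both exploit $Q_t = N_t - N_{t-D}$ together with linearity of expectation and bilinearity of covariance, reduce every term to $\mathcal{C}(\cdot,\cdot)$ via Lemma~\ref{cdeflemma}, and then split into cases according to which shifted times are nonnegative. Your explanation is in fact a bit more explicit than the paper's about \emph{why} the split on $\tau \gtrless D$ arises (namely, to order $t-D$ against $t-\tau$ in the cross term), but the argument is the same.
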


    \begin{proof} Throughout this proof we make use of the form of the auto-covariance of $N_t$ given in Lemma~\ref{cdeflemma}.
    The transient mean is straightforward since it follows from the linearity property of expectation and just taking the difference of the two means.  Moreover, for the variance we have
    \begin{eqnarray*}
     \mathrm{Var}[ Q_t ] &=& \mathrm{Var}[ N_t - N_{t-D} ] \\
     &=&  \mathrm{Var}[ N_{t} ] +  \mathrm{Var}[ N_{t-D} ] - 2  \mathrm{Cov}[ N_{t} , N_{t-D} ]  \\
     &=&  \mathrm{Var}[ N_{t} ] +  \mathrm{Var}[ N_{t-D} ] - 2  \,\mathcal{C}(t,D) \\
     &=&   \mathcal{C}(t,0) +   \mathcal{C}(t-D,0) - 2\,  \mathcal{C}(t,D) .
    \end{eqnarray*}
    Finally for the auto-covariance, if $\tau \geq D$ we have that
      \begin{eqnarray*}
     \mathrm{Cov}[ Q_t, Q_{t-\tau} ] &=&
     \begin{cases}
     0 & \text{if $t \leq \tau$,}\\
     \Cov{N_t - N_{t-D}, N_{t-\tau}} & \text{if $\tau <t \leq \tau + D$}\\
     \Cov{N_t - N_{t-D}, N_{t-\tau}-N_{t-\tau-D}} & \text{if $\tau + D < t$}
     \end{cases}
%     \mathrm{Cov}[ N_t - N_{t-D} , N_{t-\tau} - N_{t - \tau - D}] \\
%     &=&  \mathrm{Cov}[ N_{t}, N_{t-\tau} ] +   \mathrm{Cov}[ N_{t-D} , N_{t- \tau -D} ] - \mathrm{Cov}[ N_{t}, N_{t-\tau-D} ] -   \mathrm{Cov}[ N_{t-\tau} , N_{t-D} ]   \\
%     &=&   \mathcal{C}(t,\tau) +   \mathcal{C}(t-D,\tau) -  \mathcal{C}(t,\tau+D) -  \mathcal{C}(t - \min(\tau,D),|\tau-D|) .
    \end{eqnarray*}
    by the definition of the $Hawkes/D/\infty$ queue and from the linearity of covariance. Now, for $\tau < D$, we have that
    \begin{eqnarray*}
     \mathrm{Cov}[ Q_t, Q_{t-\tau} ] &=&
    \begin{cases}
     0 & \text{if $t \leq \tau$,}\\
     \Cov{N_t, N_{t-\tau}} & \text{if $\tau < t \leq D$,}\\
     \Cov{N_t - N_{t-D}, N_{t-\tau}} & \text{if $D <t \leq \tau + D$}\\
     \Cov{N_t - N_{t-D}, N_{t-\tau}-N_{t-\tau-D}} & \text{if $\tau + D < t$.}
     \end{cases}
     \end{eqnarray*}
     Again by the definition of the deterministic, Hawkes-driven, infinite server queue and the linearity of covariance, we achieve the stated result.
    \end{proof}

 \vspace{-.1in}
 \begin{figure}[h]
\begin{center}	
\includegraphics[width=.6\textwidth]{./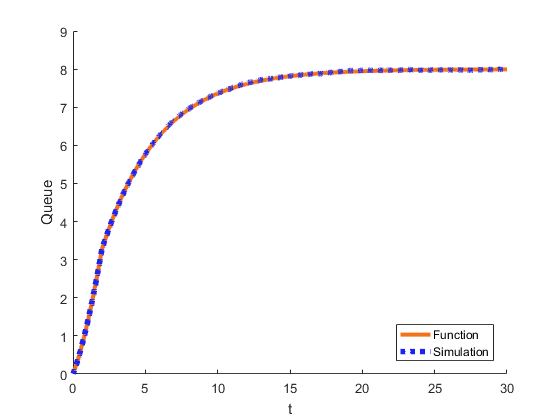}
\caption{Mean of the ${Hawkes/D/\infty}$ Queue with ${D} = 5$, ${\lambda}^* = 1$, ${\alpha} = \frac 3 4$, and ${\beta} = \frac 5 4$.}
\label{hawkesautocofig}
\end{center}
\end{figure}

\section{$Hawkes/PH/\infty$ Queue} \label{sec_hawkphinf}

\edit{
In this section, we will explore queueing systems in which arrivals occur according to a Hawkes process. This section is organized in the following manner. In Subsection~\ref{sslemmas}, we provide key model definitions such as the phase-type distribution and \minrev{we detail} technical lemmas that support our analysis. Next, in Subsection~\ref{ssmeandyn}, we derive differential equations for all moments of the queueing system and solve for exact expressions for the first and second moments. In Subsection~\ref{sslimits}, we consider the stationary limits of queues with stable arrival processes and investigate the transient behavior of those with unstable arrivals. Afterwards, we consider the auto-covariance of the queue in Subsection~\ref{ssautoco}\minrev{.} Finally, in Subsection~\ref{ssgenfunc} we derive partial differential equations for the moment generating function and the cumulant moment generating function for this system.
}

\subsection{Model Definitions and Technical Lemmas}\label{sslemmas}
\edit{To begin, we define the phase-type distribution. This form of service, formally defined below, can be thought of as a sequence of sub-services that have independent and exponentially distributed durations. We use this primarily for two \edit{factors}. The first is that this is more general than just exponential service, and it can be shown that phase-type distributions can approximate any non-negative continuous distribution, see \cite{cox1955use}. Secondly, because the phase-type distribution is comprised of independent exponential service times, a queueing system with such service distributions is Markovian. Thus, these two properties together give us a system that is both flexible in application and practical in terms of analysis.
}
A phase-type distribution with $n$ phases represents the time taken from an initial state to an absorbing state of a continuous time Markov chain (CTMC) with the following infinitesimal generator matrix,
\begin{equation*}
	\Gamma = \left[
		\begin{array}{cc}
			0 & \mathbf{0} \\
			\mathbf{s} & \mathbf{S}
		\end{array} \right]. \label{eqn_q}
\end{equation*}
Here $\mathbf{0}$ is a $1\times n$ zero vector, $\mathbf{s}$ is an $n \times 1$ vector, and $\mathbf{S}$ is an $n \times n$ matrix. Note $\mathbf{s} = -\mathbf{S}\mathbf{v}$ where $\mathbf{v}$ is an $n \times 1$ vector of ones. The matrix $\mathbf{S}$ and the initial distribution $\theta$, which is a $1 \times n$ vector, identify the phase-type distributions.
The number of phases in $\mathbf{S}$ is $n$. The matrix $\mathbf{S}$ and vector $\mathbf{s}$ can be expressed as:
\begin{align}
	\mathbf{S} &=
		\begin{bmatrix}
			-\mu_{1} & \cdots & \mu_{1,n}\\
			\vdots & \ddots & \vdots \\
			\mu_{n,1} & \cdots & -\mu_{n}
		\end{bmatrix} , \quad \mathbf{s} = (\mu_{1,0}, \ldots, \mu_{n, 0})^\mathrm{T}, \label{eqn_ss}
\end{align}
where the $\mu_{ij}$'s agree with the definition of the infinitesimal generator matrix $\mathbf{\Gamma}$.  For notational consistency, we use a term \emph{phase} to indicate the state of CTMC of the phase-type distributions throughout this paper. \edit{Additionally, we now note that in all following use of the matrix $S$ we will not use a bold notation as in those settings additional emphasis that it is a matrix is not necessary.}

With the phase-type distributions as described \edit{above}, we build a Markovian queueing model referred to as the $Hawkes/PH/\infty$ queue. We assume that the system starts with no customers and that there are infinitely many servers. Further, we suppose that there are $n$ phases of service and the transition rate between two distinct phases $i$ and $j$ is $\mu_{ij}$. Let $\theta \in [0,1]^n$ be a distribution over the phases such that the probability that an arriving entity joins the $i^\text{th}$ phase is $\theta_i$, with $\sum_{i=1}^n \theta_i = 1$. An entity departs the system at rate $\mu_{i0}$, where $i$ is the entity's phase of service before leaving. For brevity of notation, define $\mu_i \equiv \mu_{i0} + \mu_{i1} + \dots + \mu_{i,i-1} + \mu_{i,i+1} + \mu_{i,n}$. Let $Q_t \in \mathbb{N}^n$ represent the number of entities in the queueing system, with $Q_{t,i}$ representing the number in phase $i$ of service i.e.
\begin{equation}
Q_t = \sum^{n}_{i=1} Q_{t,i} \onevec{i}
\end{equation}
 \noindent where $\onevec{i}$ is the unit column vector in the $i^\text{th}$ coordinate.  We let $(\lambda_t, N_t)$ represent a Hawkes process as described in Equation~\ref{hawkdyn}. We will now  find the infinitesimal generator for real valued functions of the state space, $f : \mathbb{R}^+ \times \mathbb{N} \times \mathbb{N}^n \to \mathbb{R}$. For simplicity of notation, when describing the difference \edit{in values} of $f$ for changed arguments  we will only list the variables that change, rather than listing all $n$ queueing phase variables. This generator is shown below.
\begin{align}
&\mathcal{L}f(x)
=
\underbrace{\beta ( \lambda^* - \lambda_t)\frac{\partial f(x)}{\partial \lambda_t} }_{\edit{\text{Excitation Decay}}}
+
\underbrace{\sum_{i=1}^n \lambda_t \theta_i \left( f(\lambda_t + \alpha, N_t + 1, Q_{t,i} + 1) - f(x) \right)}_{\mathrm{Arrivals}}
\\&+ \underbrace{\sum_{i=1}^n\sum_{\substack{j = 1\\j \ne i}}^n \mu_{ij}Q_{t,i} \left(f(\lambda_t, N_t, Q_{t,i} - 1, Q_{t,j} + 1) - f(x)\right)}_{\mathrm{Transfers}} + \underbrace{\sum_{i=1}^n\mu_{i0}Q_{t,i}\left(f(\lambda_t,N_t,Q_{t,i}-1) - f(x)\right)}_{\text{Departures}} \nonumber
\end{align}
Here, $x$ is an element of the state space $\left(\mathbb{R}^+ \times \mathbb{N} \times \mathbb{N}^n\right)$. We can use this to obtain Dynkin's formula for the full $Hawkes/PH/\infty$ queueing system. We have that
\begin{equation}\label{dynQ}
\text{E}_t\left[ f(X_s)  \right] = f(X_t) + E_t\left[ \int^{s}_{t} \mathcal{L}f(X_u) du \right]  ,
\end{equation}
where $X_t = (\lambda_t, N_t, Q_t)$. This gives rise to \edit{the following lemma}.

\begin{lemma}\label{fubinifundQ}
Let $f$ be a function such that Equation~\ref{dynQ} holds. Then,
$$
\frac{\mathrm{d}}{\mathrm{d}t} \E{f(X_t)}
=
\E{\mathcal{L}f(X_t)}
$$
for all $t \geq 0$.
\end{lemma}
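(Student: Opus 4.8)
The plan is to obtain the identity from Dynkin's formula \eqref{dynQ} in three moves: remove the conditioning, exchange expectation with the time integral, and then differentiate. First I would apply the tower property to \eqref{dynQ} with the base time taken to be $0$ (or any fixed point), which gives $\E{f(X_s)} = \E{f(X_0)} + \E{\int_0^s \mathcal{L}f(X_u)\,\mathrm{d}u}$. The key analytic step is then to invoke Fubini's theorem to write $\E{\int_0^s \mathcal{L}f(X_u)\,\mathrm{d}u} = \int_0^s \E{\mathcal{L}f(X_u)}\,\mathrm{d}u$, which is legitimate once we check that $\int_0^s \E{\abparen{\mathcal{L}f(X_u)}}\,\mathrm{d}u < \infty$.

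For that integrability check I would use the explicit form of $\mathcal{L}f$ displayed above. Each of its four groups of terms (excitation decay, arrivals, transfers, departures) is, after the relevant evaluations of $f$, a polynomial-type expression in $\lambda_u$ and the coordinates of $Q_u$, so it suffices to know that the relevant mixed moments of $(\lambda_u, N_u, Q_u)$ are finite and locally bounded in $u$. Finiteness of the moments of $\lambda_u$ and $N_u$ follows from Proposition~\ref{hawkgeneral} and Proposition~\ref{hawksolveprop}, while the moments of $Q_u$ are dominated by those of $N_u$ because $Q_{u,i} \le N_u$ for every phase $i$; local boundedness in $u$ then follows since these moment functions are continuous in time. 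This is exactly the class of $f$ — polynomials in the state variables, and the truncated exponentials used later for the generating functions — for which \eqref{dynQ} is assumed to hold, so no generality is lost.

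Having established $\E{f(X_s)} = \E{f(X_0)} + \int_0^s \E{\mathcal{L}f(X_u)}\,\mathrm{d}u$, the final step is to differentiate both sides in $s$. By the fundamental theorem of calculus the right-hand side is differentiable with derivative $\E{\mathcal{L}f(X_s)}$ at every $s$ where $u \mapsto \E{\mathcal{L}f(X_u)}$ is continuous, and that continuity again follows from continuity of the moment functions entering $\E{\mathcal{L}f}$. Renaming $s$ as $t$ yields the claim for all $t \geq 0$.

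The main obstacle I anticipate is not the algebra but the integrability and regularity bookkeeping: one must be precise about which functions $f$ the statement is asserted for and verify the bounds that legitimize both the Fubini exchange and the passage to the derivative. Since the paper only ever applies this lemma to polynomials in $(\lambda_t, N_t, Q_t)$ and to the exponential test functions in the generating-function analysis, the cleanest route is to record the moment bounds for those specific families rather than attempting a fully general statement; I would expect the written proof to be short, citing a standard version of this consequence of Dynkin's formula and noting only that all moments in sight are finite and continuous in $t$.
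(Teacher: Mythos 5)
Your proposal is correct and follows essentially the same route as the paper: apply Dynkin's formula from time $0$, exchange the expectation and time integral via Fubini, and differentiate via the fundamental theorem of calculus. The only difference is that you spell out the integrability and continuity bookkeeping that the paper leaves implicit (deferring instead to a remark citing Lemma 2 of \citet{oelschlager1984martingale} for the unbounded-function issue), which is a reasonable addition but not a different argument.
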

\begin{proof} \edit{T}his is achieved through use of Fubini's theorem and the fundamental theorem of calculus. Using Equation~\ref{dynQ} we have that
\begin{align*}
\frac{\mathrm{d}}{\mathrm{d}t}\E{f(X_t)} &= \frac{\mathrm{d}}{\mathrm{d}t}\left(f(X_0) + \E{\int_0^t \mathcal{L}f(X_u) \text {d}u}\right)
\\&= \frac{\mathrm{d}}{\mathrm{d}t}\E{\int_0^t \mathcal{L}f(X_u) \text {d}u} = \frac{\mathrm{d}}{\mathrm{d}t}\int_0^t \E{\mathcal{L}f(X_u) }\text {d}u = \E{\mathcal{L}f(X_t) }
\end{align*}
and this completes this proof.
\end{proof}

\begin{remark}
\edit{It is important for the reader to recognize that this is equivalent to Dynkin's theorem.  In most textbooks, Dynkin's theorem is proved for sufficiently differentiable and more importantly bounded functions.  However, this assumption of boundedness can often be relaxed.  In fact this relaxation of the boundedness is very common for extending results like Ito's lemma and the Feynman Kac formula for unbounded, but polynomial bounded functions.  This is often extended by stopping the process when it hits a certain level by using stopping times.  Then one applies the previous results for bounded functions and takes limits as the bound tends to infinity.  For the interested reader, see Lemma 2 of \citet{oelschlager1984martingale} for a proof. }
\\ \end{remark}

Now, before using these differential equations to find explicit functions as we did previously, we will first introduce a series of technical lemmas to aid our analysis. \minrev{These lemmas are presented without proof as they follow from standard approaches for matrix exponentials and integration.} First, we \minrev{give} a form for the indefinite integral of the exponential of a non-singular matrix.

\begin{lemma}\label{expmint}
Let $L \in \mathbb{R}^{n\times n}$ be invertible. Then, if the integral of $e^{L t}$ exists it can be expressed
$$
\int e^{L t} \, \mathrm{d}t = L^{-1}e^{Lt} + c
$$
where $c$ is some constant of integration.
\end{lemma}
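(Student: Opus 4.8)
The plan is to verify directly that $L^{-1}e^{Lt}$ is an antiderivative of $e^{Lt}$ and then appeal to the standard fact that any two antiderivatives of a continuous matrix-valued function on an interval differ by a constant matrix. First I would recall the defining analytic property of the matrix exponential: writing $e^{Lt} = \sum_{k=0}^{\infty} \frac{L^k t^k}{k!}$, this series converges absolutely and uniformly on compact $t$-intervals, so it may be differentiated term by term, which gives $\frac{\mathrm{d}}{\mathrm{d}t} e^{Lt} = \sum_{k=1}^{\infty} \frac{L^k t^{k-1}}{(k-1)!} = L e^{Lt} = e^{Lt} L$. In particular $e^{Lt}$ is continuous (indeed real-analytic) in $t$, so its indefinite integral always exists and the qualifier in the statement is automatically met.

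Next, since $L$ is invertible, $L^{-1}$ is a fixed matrix not depending on $t$, so it passes through the $t$-derivative: $\frac{\mathrm{d}}{\mathrm{d}t}\left(L^{-1}e^{Lt}\right) = L^{-1}\frac{\mathrm{d}}{\mathrm{d}t} e^{Lt} = L^{-1} L e^{Lt} = e^{Lt}$. This exhibits $L^{-1}e^{Lt}$ as a particular antiderivative of $e^{Lt}$. Finally, if $F(t)$ is any other antiderivative, then $\frac{\mathrm{d}}{\mathrm{d}t}\left(F(t) - L^{-1}e^{Lt}\right)$ is the zero matrix entrywise, and an entrywise-differentiable matrix function with identically vanishing derivative on an interval is constant there; calling that constant $c$ yields $\int e^{Lt}\,\mathrm{d}t = L^{-1}e^{Lt} + c$, as claimed.

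There is no substantive obstacle in this argument; the only points needing any care are the justification for term-by-term differentiation of the exponential series (equivalently, that $Y(t) = e^{Lt}$ solves $Y'(t) = LY(t)$) and the remark that invertibility of $L$ is precisely what permits writing the primitive in the clean closed form $L^{-1}e^{Lt}$. If $L$ were singular one could still integrate $e^{Lt}$ componentwise, but the resulting antiderivative would not take this shape, so the hypothesis is exactly what the conclusion requires.
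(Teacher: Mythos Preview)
Your argument is correct and is precisely the ``standard approach'' the paper alludes to: the paper omits the details entirely, stating only that the proof follows from standard approaches, and your direct verification that $\frac{\mathrm{d}}{\mathrm{d}t}\bigl(L^{-1}e^{Lt}\bigr) = e^{Lt}$ via the term-by-term differentiable power series is exactly what that phrase points to.
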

\begin{proof}
\minrev{The proof follows from standard approaches.}
%The proof is given in the Appendix.
%By the definition of the matrix exponential, we have that
%$$
%\int  e^{L t} \, \mathrm{d}t
%=
%\int \sum_{k=0}^\infty L^k \frac{t^k}{k!} \, \mathrm{d}t
%=
%\sum_{k=0}^\infty L^k \int  \frac{t^k}{k!} \, \mathrm{d}t
%$$
%where we have exchanged the order of integration and summation using Fubini's theorem. Then,
%$$
%\sum_{k=0}^\infty L^k \int  \frac{t^k}{k!} \, \mathrm{d}t
%=
%\sum_{k=0}^\infty L^k  \frac{t^{k+1} + c_{k+1}}{(k+1)!}
%$$
%where $c_{k+1}$ is the corresponding constant of integration for each term. Now, we have assumed that the integral of $e^{L t}$ exists and so we introduce a general constant of integration, $c$ such that
%$$
%\sum_{k=0}^\infty L^k  \frac{t^{k+1} + c_{k+1}}{(k+1)!}
%=
%\sum_{k=0}^\infty L^k  \frac{t^{k+1}}{(k+1)!} + c
%=
%L^{-1} \sum_{k=0}^\infty L^{k+1}  \frac{t^{k+1}}{(k+1)!} + c = L^{-1} e^{L t} + c
%$$
%\edit{where} $c$ has both absorbed the $c_k$ series and the shift from adjusting the summand.
\end{proof} 

\minrev{The second lemma now} provides explicit forms for the definite integral from 0 to $t$ of the product of an exponential of an invertible matrix, a vector, a scalar power of the variable of integration, and a scalar exponential function of the variable of integration.

\begin{lemma}\label{singleintlemma}
Let $L \in \mathbb{R}^{n\times n}$ be invertible, let $\nu \in \mathbb{R}^n$, let $\eta \in \mathbb{N}$, and let $\gamma \in \mathbb{R}$. Then, if $L + \gamma I$ is invertible,
$$
\int_0^t e^{L  s } \nu  s ^{\eta} e^{\gamma  s } \,\mathrm{d} s
=
\sum_{k=0}^{\eta} \frac{\eta !}{(\eta - k)!}(-1)^k\left(L + \gamma I \right)^{-(k+1)}\left(e^{Lt}\nu t^{\eta - k} e^{\gamma t} \right) -  \eta ! (-1)^{\eta}\left(L + \gamma I \right)^{-(\eta+1)}\nu
$$
for $t > 0$.
\end{lemma}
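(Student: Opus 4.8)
The plan is to prove this by induction on $\eta$, using Lemma~\ref{expmint} as the base case and integration by parts for the inductive step. The first observation is that $\gamma I$ commutes with $L$, so $e^{Ls}e^{\gamma s} = e^{(L+\gamma I)s}$; writing $M = L + \gamma I$ (invertible precisely by the hypothesis), the quantity to evaluate is $\int_0^t e^{Ms}\nu\, s^\eta\, \mathrm{d}s$, whose integrand is continuous on $[0,t]$ so the integral certainly exists. For $\eta = 0$, Lemma~\ref{expmint} applied to $M$ gives the antiderivative $M^{-1}e^{Ms}$, hence $\int_0^t e^{Ms}\nu\, \mathrm{d}s = M^{-1}e^{Mt}\nu - M^{-1}\nu$, which is exactly the claimed formula with $\eta = 0$ once the single $k=0$ term of the sum is written out.

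For the inductive step, assume the formula holds for $\eta - 1$. Integration by parts with $u = s^\eta$ and $\mathrm{d}v = e^{Ms}\nu\, \mathrm{d}s$, so that $v = M^{-1}e^{Ms}\nu$ by Lemma~\ref{expmint}, yields
\[
\int_0^t e^{Ms}\nu\, s^\eta\, \mathrm{d}s = t^\eta M^{-1}e^{Mt}\nu - \eta M^{-1}\int_0^t e^{Ms}\nu\, s^{\eta-1}\, \mathrm{d}s,
\]
where the boundary term at $s=0$ vanishes since $\eta \geq 1$. Substituting the inductive hypothesis for the remaining integral, multiplying through by $-\eta M^{-1}$, and re-indexing the resulting sum via $j = k+1$ shifts its range from $\{0,\dots,\eta-1\}$ to $\{1,\dots,\eta\}$; the leading term $t^\eta M^{-1}e^{Mt}\nu$ is precisely the missing $j=0$ term, so together they form $\sum_{j=0}^{\eta}\frac{\eta!}{(\eta-j)!}(-1)^j M^{-(j+1)}e^{Mt}\nu\, t^{\eta-j}$. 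Tracking the factorials and signs in the constant term gives $\eta\,(\eta-1)!(-1)^{\eta-1} = -\eta!(-1)^\eta$ for the coefficient of $M^{-(\eta+1)}\nu$, which closes the induction. Substituting back $M = L + \gamma I$ and $e^{Mt}\nu = e^{Lt}e^{\gamma t}\nu$ produces the stated identity.

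An equivalent route, avoiding induction, is to differentiate under the integral sign: since $\partial_a^\eta e^{as}\big|_{a=0} = s^\eta$, one has $\int_0^t e^{Ms}\nu\, s^\eta\, \mathrm{d}s = \partial_a^\eta\big|_{a=0}\int_0^t e^{(M+aI)s}\nu\, \mathrm{d}s = \partial_a^\eta\big|_{a=0}\big[(M+aI)^{-1}(e^{(M+aI)t}-I)\nu\big]$, after which the Leibniz rule and $\partial_a(M+aI)^{-1} = -(M+aI)^{-2}$ reproduce the same sum. Either way there is no genuine analytic difficulty; the one thing requiring care — and the only place a slip is likely — is the index shift and the $(-1)^k\,\eta!/(\eta-k)!$ bookkeeping in the inductive step, which must be carried out attentively to land exactly on the closed form as stated.
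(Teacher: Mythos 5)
Your proof is correct and follows exactly the route the paper indicates for this lemma (induction on $\eta$, with Lemma~\ref{expmint} supplying the base case and the antiderivative used in the integration by parts); the index shift and sign bookkeeping in your inductive step check out. Nothing further is needed.
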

\begin{proof}
\minrev{The proof follows from the preceding lemma, induction, and integration by parts.}
\end{proof} 

\minrev{The} next lemma is a quick demonstration of commutativity of the inverse of a matrix exponential and an inverse of the same matrix shifted in the direction of the identity.

\begin{lemma}\label{commutelemma}
Let $A \in \mathbb{R}^{n \times n}$ be invertible and let $b, c \in \mathbb{R}$ be such that $cA + bI$ is also invertible. Then,
$$
e^{-A}\left(cA + bI\right)^{-1} = \left(cA + bI\right)^{-1}e^{-A} .
$$
\end{lemma}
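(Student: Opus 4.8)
The plan is to show that the two sides agree by reducing everything to a statement about power series in $A$. The key observation is that $e^{-A}$ is (the limit of) a polynomial in $A$, namely $e^{-A} = \sum_{k=0}^\infty \frac{(-1)^k}{k!} A^k$, and any polynomial in $A$ commutes with any other polynomial in $A$. Since $(cA+bI)^{-1}$ is itself expressible in terms of $A$, the result should follow. Concretely, first I would establish the elementary fact that if $M$ is any invertible matrix and $P$ is any matrix with $PM = MP$, then $P M^{-1} = M^{-1} P$: indeed, multiplying $PM = MP$ on the left and right by $M^{-1}$ gives $M^{-1} P = P M^{-1}$.

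Next I would apply this with $M = cA + bI$ and $P = e^{-A}$. So the crux is to verify that $e^{-A}$ commutes with $cA + bI$. Since $e^{-A}$ commutes with $A$ (each partial sum $\sum_{k=0}^{m} \frac{(-1)^k}{k!}A^k$ commutes with $A$, and commutation is preserved under the limit defining the matrix exponential) and trivially commutes with $I$, linearity gives $e^{-A}(cA+bI) = c\,e^{-A}A + b\,e^{-A} = cA\,e^{-A} + bI\,e^{-A} = (cA+bI)e^{-A}$. Combining this with the first step, with $M = cA + bI$ (invertible by hypothesis) and $P = e^{-A}$, yields $e^{-A}(cA+bI)^{-1} = (cA+bI)^{-1}e^{-A}$, which is exactly the claim.

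There is essentially no obstacle here — the statement is a routine consequence of the fact that functions of a single matrix form a commutative algebra. The only point requiring minimal care is the passage to the limit: one should note that $e^{-A}$ is defined as the convergent series $\sum_{k\geq 0} \frac{(-1)^k}{k!}A^k$, each partial sum commutes with $A$, and matrix multiplication is continuous, so the limit commutes with $A$ as well. Everything else is a two-line manipulation with inverses. Because of this, I would present the argument in just a few sentences rather than as a formal multi-step proof, which is consistent with the paper's remark that these technical lemmas "follow from standard approaches for matrix exponentials and integration."

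\begin{proof}
\minrev{The proof follows from standard approaches; we sketch it for completeness. Since $e^{-A} = \sum_{k=0}^{\infty} \frac{(-1)^k}{k!}A^k$ is the limit of polynomials in $A$, and matrix multiplication is continuous, $e^{-A}$ commutes with $A$ and hence with $cA + bI$. Therefore, writing $M = cA + bI$, which is invertible by assumption, we have $e^{-A} M = M e^{-A}$. Multiplying both sides on the left and on the right by $M^{-1}$ gives $M^{-1} e^{-A} = e^{-A} M^{-1}$, which is the stated identity.}
\end{proof}
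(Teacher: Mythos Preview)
Your proof is correct and takes essentially the same approach the paper has in mind: the paper simply states that the result ``follows from the definition of the matrix exponential,'' and your argument spells this out by noting that $e^{-A}$ is a limit of polynomials in $A$, hence commutes with $cA+bI$, and then passing to the inverse.
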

\begin{proof}
\minrev{The proof follows from the definition of the matrix exponential.}
%By the definition of the matrix exponential and the fact that for invertible $X$ and $Y$ $(XY)^{-1} = Y^{-1}X^{-1}$, we have that
%\begin{align*}
%e^{-A}\left(cA + bI\right)^{-1}
%&
%=
%\left(\sum_{k=0}^\infty \frac{A^k}{k!}\right)^{-1}\left(A + bI\right)^{-1}
%=
%\left(cA\sum_{k=0}^\infty \frac{A^k}{k!} + bI\sum_{k=0}^\infty \frac{A^k}{k!}\right)^{-1}
%\\
%&
%=
%\left(\sum_{k=0}^\infty \frac{A^k}{k!}(cA) + \sum_{k=0}^\infty \frac{A^k}{k!}(bI)\right)^{-1}
%=
%\left(cA + bI\right)^{-1}\left(\sum_{k=0}^\infty \frac{A^k}{k!}\right)^{-1}
%\\
%&
%=
%\left(cA + bI\right)^{-1}e^{-A}
%\end{align*}
%\edit{where} we have used the commutativity of $A$ and itself and of $A$ and the identity.
\end{proof}

\edit{These lemmas now come together to give the general solution to differential equations of a certain form.}

\begin{lemma}\label{mainlemma}
Let $g(t) \in \mathbb{R}^n$ be a function described by the dynamics
$$
\updot{g}(t) = - L g(t) + \sum_{i \in \mathcal{S}}  \nu_i t^{\eta_i} e^{\gamma_i t}
$$
with an initial condition of $g(0) = g_0$, where $L \in \mathbb{R}^{n \times n}$ is invertible and $\mathcal{S}$ is a finite index set such that $\nu_i \in \mathbb{R}^n$, $\eta_i \in \mathbb{N}$, and $\gamma_i \in \mathbb{R}$ for each $i \in \mathcal{S}$. Then, if $L + \gamma_i I$ is invertible for all $i \in \mathcal{S}$ the explicit function for $g(t)$ is given by
\begin{align*}
g(t) &=
\sum_{i \in \mathcal{S}}\sum_{k=0}^{\eta_i} \frac{\eta_i !(-1)^k}{(\eta_i - k)!}\left(L  + \gamma_i I \right)^{-(k+1)}
\left(\nu_i t^{\eta_i - k} e^{\gamma_i t} \right)
-
\eta_i ! (-1)^{\eta_i}\left(L + \gamma_i I \right)^{-(\eta_i+1)}e^{-L t}\nu_i
+
e^{-L t} g_0
\end{align*}
for all $t \geq 0$.
\end{lemma}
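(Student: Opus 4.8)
The plan is to treat this as a standard first-order linear matrix ODE and solve it by the integrating-factor method, then simplify the resulting expression using the three preceding technical lemmas. First I would note that since $L$ is invertible, the matrix exponential $e^{Lt}$ is well defined and invertible with inverse $e^{-Lt}$, and that $\frac{\mathrm{d}}{\mathrm{d}t}e^{Lt} = Le^{Lt} = e^{Lt}L$. Multiplying the dynamics $\updot{g}(t) = -Lg(t) + \sum_{i \in \mathcal{S}} \nu_i t^{\eta_i} e^{\gamma_i t}$ on the left by $e^{Lt}$ and recognizing the left-hand side as $\frac{\mathrm{d}}{\mathrm{d}t}\left(e^{Lt}g(t)\right)$, I would integrate from $0$ to $t$ to obtain
$$
e^{Lt}g(t) = g_0 + \sum_{i \in \mathcal{S}}\int_0^t e^{Ls}\nu_i s^{\eta_i}e^{\gamma_i s}\,\mathrm{d}s,
$$
where interchanging the finite sum with the integral is immediate.

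Next I would apply Lemma~\ref{singleintlemma} to each integral in the sum; this is exactly where the hypothesis that $L + \gamma_i I$ is invertible for every $i \in \mathcal{S}$ is used, and it produces the double sum together with the constant term $-\eta_i!(-1)^{\eta_i}(L+\gamma_i I)^{-(\eta_i+1)}\nu_i$ for each $i$. Multiplying through on the left by $e^{-Lt}$ then expresses $g(t)$ as $e^{-Lt}g_0$ plus a sum of terms of the form $e^{-Lt}(L+\gamma_i I)^{-(k+1)}e^{Lt}\nu_i t^{\eta_i-k}e^{\gamma_i t}$ plus the constant-term contributions $-\eta_i!(-1)^{\eta_i}e^{-Lt}(L+\gamma_i I)^{-(\eta_i+1)}\nu_i$. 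The key simplification is that $e^{-Lt}$ commutes with $(L+\gamma_i I)^{-(k+1)}$ — an iterated application of Lemma~\ref{commutelemma} (taking $A = Lt$, $b = \gamma_i$, and $c = 1/t$ for $t \ne 0$, with the $t = 0$ case trivial) — so that $e^{-Lt}(L+\gamma_i I)^{-(k+1)}e^{Lt}\nu_i = (L+\gamma_i I)^{-(k+1)}\nu_i$, which collapses the expression to precisely the stated formula.

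Finally I would verify the initial condition as a sanity check: at $t = 0$ only the $k = \eta_i$ term of each inner sum survives (all others carry a positive power of $t$), and it equals $\eta_i!(-1)^{\eta_i}(L+\gamma_i I)^{-(\eta_i+1)}\nu_i$, which cancels the constant term $-\eta_i!(-1)^{\eta_i}(L+\gamma_i I)^{-(\eta_i+1)}e^{0}\nu_i$, leaving $g(0) = e^{0}g_0 = g_0$. An equally valid alternative is to skip the derivation and simply differentiate the claimed closed form, checking directly that it satisfies both the ODE and the initial condition; this requires the same commutativity bookkeeping but avoids the integration step. I do not anticipate a genuine obstacle, since the content is entirely in organizing the algebra, but the one place requiring care is the commutativity step: one must confirm that the rescaling used to invoke Lemma~\ref{commutelemma} is valid (handling $t = 0$ separately) and that it propagates to all powers $(k+1)$, not merely the first.
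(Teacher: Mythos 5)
Your proposal is correct and follows essentially the same route the paper indicates: integrating factor $e^{Lt}$, evaluation of the resulting integrals via Lemma~\ref{singleintlemma} (which is where the invertibility of $L+\gamma_i I$ enters), and commutation of $e^{-Lt}$ with $(L+\gamma_i I)^{-(k+1)}$ via iterated use of Lemma~\ref{commutelemma} to collapse the conjugated terms, with the initial-condition check confirming the constant terms cancel at $t=0$. The only cosmetic remark is that $e^{Lt}$ is well defined for any square matrix, so the invertibility of $L$ is not needed for the integrating factor itself.
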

\begin{proof}
\minrev{The proof follows from standard differential equation techniques and the three preceding lemmas.}
\end{proof}

%\begin{lemma}
%Let $A$, $B$, $M$, and $\Lambda \in \mathbb{R}^{n \times n}$ and let $\chi \in \mathbb{R}^n$. Then, if $A$ and $B$ are invertible,
%$$
%\int_0^t e^{-A s} M \left( e^{\chi s} I - e^{\Sigma s} \right) \Lambda e^{-B s} \,\mathrm{d}s
%=
%$$
%for all $t \geq 0$.
%\begin{proof}
%We start by distributing and separating the integral into two.
%\begin{align*}
%&\int_0^t e^{-A s} M \left( e^{\chi s} I - e^{\Sigma s} \right) \Lambda e^{-B s} \,\mathrm{d}s
%=
%\int_0^t e^{\chi s} e^{-A s} M \Lambda e^{-B s} \,\mathrm{d}s
%-
%\int_0^t e^{-A s} M e^{\Sigma s} \Lambda e^{-B s} \,\mathrm{d}s
%\end{align*}
%\edit{where} we have used Lemma~\ref{expmint}.
%\end{proof} 
%\end{lemma}

Now, before introducing one final lemma we first define a useful matrix.
For $\gamma, c \in \mathbb{R}$, $\nu \in \mathbb{R}^n$, and $L \in \mathbb{R}^{n \times n}$, let $M_{\gamma, \nu, L}(t) \in \mathbb{R}^{n \times n}$ be such that
\begin{equation}\label{mdef}
M_{\gamma, \nu, L}(t)
=
\int_0^t
e^{(\gamma I - L^\T)s}
\nu\nu^\T  e^{-L s}
\,\mathrm{d}s
\end{equation}
for all $t \geq 0$. Element-wise, we can express this matrix after integration as
\begin{equation*}\label{meldef}
\left(M_{\gamma, \nu, L}(t)\right)_{i,j}
  =
\begin{cases}
\sum_{k=1}^n\sum_{l=1}^n \nu_k\nu_l  \sum_{r=0}^\infty \sum_{w = 0}^\infty \frac{(L^r)_{k,i}(L^w)_{l,j}}{\gamma^{r+w+1}} {r + w \choose r } \left(e^{\gamma t} \sum_{z=0}^{r+w} \frac{(-\gamma t)^z}{z!} - 1\right)
& \text{if $\gamma \ne 0$},\\
\sum_{k=1}^n\sum_{l=1}^n \nu_k\nu_l  \sum_{r=0}^\infty \sum_{w = 0}^\infty \frac{(L^r)_{k,i}(L^w)_{l,j} t^{r+w+1}}{r!w!(r+w+1)}
& \text{if $\gamma = 0$}.
\end{cases}
\end{equation*}
This  function provides shorthand when integrating a particular function that otherwise does not produce a nice linear algebraic form. The difficulty of expressing this integral in matrix form stems from the fact that $L$ and $\nu \nu^\T$ need not commute. With defining $M_{\gamma, \nu, L}(t)$ we circumvent this issue by integrating on the element-level, but if $L$ and $\nu \nu^\T$ were to commute we could avoid this function entirely, as we will later see. For now, this definition leads us to our next lemma.

\begin{lemma}\label{doublemintgum}
Let $\eta, \gamma, c \in \mathbb{R}$, $\nu \in \mathbb{R}^n$, $L \in \mathbb{R}^{n \times n}$ be such that $L$, $\gamma I + L$, and $(\eta+1)\gamma I - L$ are each invertible. Then,
\begin{align*}
&
\int_0^t
\bigg(
\left((\eta + 1)\gamma I - L^\T\right)^{-1}
\left(e^{(\eta\gamma I - L^\T)s} - e^{-\gamma Is}\right)
\nu\nu^\T c e^{-L s}
+
e^{-L^\T s}
\nu\nu^\T c
\left(e^{(\eta\gamma I - L)s} - e^{-\gamma Is}\right)
\\
&
\quad
\cdot
\left((\eta + 1)\gamma I - L\right)^{-1}
\bigg)
\mathrm{d}s
\\
=
&
c\left((\eta + 1)\gamma I - L^\T\right)^{-1}
\bigg(
(\eta + 2)\gamma  M_{\eta \gamma, \nu, L}(t)
+
 e^{(\eta\gamma I - L^\T)t}\nu\nu^\T  e^{-L t}
 -
 \nu\nu^\T
+
\nu\nu^\T  \left(e^{-(\gamma I + L) t} -  I\right)(\gamma I + L)^{-1}
\\
&
\quad
\cdot \left((\eta + 1)\gamma I - L\right)
+
\left((\eta + 1)\gamma I - L^\T\right)(\gamma I + L^\T)^{-1}
\left(e^{-(\gamma I + L^\T) t} - I \right)\nu\nu^\T
\bigg)
\left((\eta + 1)\gamma I - L\right)^{-1}
\end{align*}
for all $t \geq 0$.
\end{lemma}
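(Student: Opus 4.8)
The plan is to integrate each of the two bracketed summands of the integrand separately, in each case peeling off the scalar factor $e^{-\gamma I s} = e^{-\gamma s}I$, and then to recombine the resulting four pieces by means of a Sylvester-type identity for the matrix $M_{\eta\gamma,\nu,L}$. To lighten notation, write $A = (\eta+1)\gamma I - L^\T$ and $B = (\eta+1)\gamma I - L$, so that $B = A^\T$ and both are invertible by hypothesis. Expanding and using that $e^{-\gamma I s}$ is a scalar multiple of the identity, the integrand equals
\[
c A^{-1}e^{(\eta\gamma I - L^\T)s}\nu\nu^\T e^{-Ls} - c e^{-\gamma s}A^{-1}\nu\nu^\T e^{-Ls} + c e^{-L^\T s}\nu\nu^\T e^{(\eta\gamma I - L)s}B^{-1} - c e^{-\gamma s}e^{-L^\T s}\nu\nu^\T B^{-1}.
\]

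First I would integrate the two ``Sylvester'' pieces. By the definition in Equation~\ref{mdef}, the first integrates to $c A^{-1}M_{\eta\gamma,\nu,L}(t)$. For the third, note that $\int_0^t e^{-L^\T s}\nu\nu^\T e^{(\eta\gamma I - L)s}\,\mathrm{d}s$ is the transpose of $\int_0^t e^{(\eta\gamma I - L^\T)s}\nu\nu^\T e^{-Ls}\,\mathrm{d}s = M_{\eta\gamma,\nu,L}(t)$, and one checks directly from the element-wise formula for $M_{\gamma,\nu,L}$ that this matrix is symmetric (simultaneously swap $i\leftrightarrow j$, the inner summation indices $k\leftrightarrow l$, and $r\leftrightarrow w$, using ${r+w\choose r}={r+w\choose w}$ and that the remaining scalar factor depends only on $r+w$). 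Hence the third piece integrates to $c M_{\eta\gamma,\nu,L}(t)B^{-1}$. For the two scalar-exponential pieces, since $e^{-\gamma s}e^{-Ls} = e^{-(\gamma I + L)s}$ with $\gamma I + L$ (hence also $\gamma I + L^\T$) invertible, Lemma~\ref{expmint} gives $\int_0^t e^{-(\gamma I + L)s}\,\mathrm{d}s = (\gamma I + L)^{-1}(I - e^{-(\gamma I + L)t})$ and likewise with $L^\T$; so the second and fourth pieces integrate to $-c A^{-1}\nu\nu^\T(\gamma I + L)^{-1}(I - e^{-(\gamma I + L)t})$ and $-c(\gamma I + L^\T)^{-1}(I - e^{-(\gamma I + L^\T)t})\nu\nu^\T B^{-1}$, respectively.

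The recombination uses the identity obtained by integrating $\frac{\mathrm{d}}{\mathrm{d}s}\left(e^{(\eta\gamma I - L^\T)s}\nu\nu^\T e^{-Ls}\right)$ over $[0,t]$: since $\eta\gamma I - L^\T$ commutes with $e^{(\eta\gamma I - L^\T)s}$ and $L$ with $e^{-Ls}$, this gives $(\eta\gamma I - L^\T)M_{\eta\gamma,\nu,L}(t) - M_{\eta\gamma,\nu,L}(t)L = e^{(\eta\gamma I - L^\T)t}\nu\nu^\T e^{-Lt} - \nu\nu^\T$, which rearranges to
\[
A\,M_{\eta\gamma,\nu,L}(t) + M_{\eta\gamma,\nu,L}(t)\,B = (\eta+2)\gamma\,M_{\eta\gamma,\nu,L}(t) + e^{(\eta\gamma I - L^\T)t}\nu\nu^\T e^{-Lt} - \nu\nu^\T.
\]
Since $c A^{-1}M_{\eta\gamma,\nu,L}(t) + c M_{\eta\gamma,\nu,L}(t)B^{-1} = c A^{-1}\left(A M_{\eta\gamma,\nu,L}(t) + M_{\eta\gamma,\nu,L}(t)B\right)B^{-1}$, the two Sylvester contributions collapse to $c A^{-1}\big[(\eta+2)\gamma M_{\eta\gamma,\nu,L}(t) + e^{(\eta\gamma I - L^\T)t}\nu\nu^\T e^{-Lt} - \nu\nu^\T\big]B^{-1}$. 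For the two scalar-exponential contributions I would insert $B\,B^{-1}$ on the right of the first and $A^{-1}A$ on the left of the second, flip the sign via $-(I - e^{-(\gamma I+L)t}) = e^{-(\gamma I+L)t} - I$, and use that $(\gamma I + L)^{-1}$ commutes with $e^{-(\gamma I + L)t} - I$ (both are power series in $\gamma I + L$; cf.\ Lemma~\ref{commutelemma}); this turns them into $c A^{-1}\big[\nu\nu^\T(e^{-(\gamma I + L)t} - I)(\gamma I + L)^{-1}B\big]B^{-1}$ and $c A^{-1}\big[A(\gamma I + L^\T)^{-1}(e^{-(\gamma I + L^\T)t} - I)\nu\nu^\T\big]B^{-1}$. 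Summing the four now commonly-factored terms yields exactly the claimed expression.

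The only genuine obstacle is the bookkeeping forced by non-commutativity: $L$ (and hence $e^{-Ls}$) does not commute with $\nu\nu^\T$, so one must track the left/right placement of every factor, and the clean factored form only emerges after recognizing the symmetry of $M_{\eta\gamma,\nu,L}(t)$ and applying the Sylvester identity in precisely the right spot. There is no analytic content beyond Lemma~\ref{expmint} and the product rule.
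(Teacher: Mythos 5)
Your argument is correct and uses exactly the ingredients the paper cites for this lemma: the definition of $M_{\eta\gamma,\nu,L}$ (together with its symmetry) for the two Sylvester pieces, the product rule to obtain the identity $A M + M B = (\eta+2)\gamma M + e^{(\eta\gamma I - L^\T)t}\nu\nu^\T e^{-Lt} - \nu\nu^\T$ with $A = (\eta+1)\gamma I - L^\T$ and $B = (\eta+1)\gamma I - L$, and Lemmas~\ref{expmint} and~\ref{commutelemma} for the scalar-exponential pieces. This is essentially the paper's intended proof, carried out correctly.
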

\begin{proof}
\minrev{The proof follows from the given definition of $M_{\gamma, \nu , L}(t)$, the product rule, and the preceding lemma.}
\end{proof}

\edit{With these lemmas and definitions now in hand we can proceed to our analysis of the $Hawkes/PH/\infty$ queueing system. \minrev{These results}, stated in the following theorem, \minrev{make} use of the form of the infinitesimal generator in Lemma~\ref{fubinifundQ}, \minrev{with simplification} through linearity of expectation and the binomial theorem.}

\subsection{Mean Dynamics of the $Hawkes/PH/\infty$ Queue} \label{ssmeandyn}

\edit{To begin investigation of the $Hawkes/PH/\infty$ queueing system, we first derive differential equations for the moments of the number in each phase of service and the intensity.}

\begin{theorem}\label{hawkdethm}
Consider a queueing system with arrivals occurring in accordance to a Hawkes process $\left(\lambda_t, N_t\right)$ with dynamics given in Equation~\ref{hawkdyn} and phase-type distributed service. Then we have differential equations for the moments of $Q_{t,i}$ given by
\begin{align}
%
% moments of phase i
\frac{\mathrm{d}}{\mathrm{d}t}\E{Q_{t,i}^m} &= \theta_i \sum_{g=0}^{m-1} {m \choose g}  \E{\lambda_t Q_{t,i}^g}
+ \sum_{g=0}^{m-1}\sum_{\substack{j=1\\ j\ne i}}^n {m \choose g} \mu_{ji} \E{Q_{t,j} Q_{t,i}^g}
\\&+ \sum_{g=1}^{m}{m \choose g - 1}\mu_i(-1)^{m-g+1}\E{Q_{t,i}^g}                                                        ,  \nonumber
\intertext{for the products of $Q_{t,i}$ and $Q_{t,j}$ where $i \ne j$ given by}
% product moments of phases i and j
 \frac{\mathrm{d}}{\mathrm{d}t}\E{Q_{t,i}^mQ_{t,j}^l} &=
\theta_i \sum_{g=0}^{m-1} {m \choose g} \E{\lambda_t  Q_{t,j}^l  Q_{t,i}^g}
+
\theta_j \sum_{h=0}^{l-1} {l \choose h} \E{\lambda_t  Q_{t,i}^m Q_{t,j}^h }
\\&
+
\sum_{\substack{k=1 \\ i \ne k \ne j}}^n \sum_{g=0}^{m-1} {m \choose g} \mu_{ki} \E{Q_{t,k} Q_{t,i}^{g} Q_{t,j}^l  }
+
\sum_{\substack{k=1 \\ j \ne k \ne i}}^n \sum_{h=0}^{l-1} {l \choose h} \mu_{kj} \E{Q_{t,k} Q_{t,i}^m  Q_{t,j}^{h} }  \nonumber
\\&
+
\mu_i\sum_{g=0}^{m-1} {m \choose g}  (-1)^{m-g} \E{Q_{t,j}^l Q_{t,i}^{g+1}}
+
\mu_{ij} \sum_{g=0}^{m} \sum_{h=0}^{l-1} {m \choose g} {l \choose h}  (-1)^{m-g}\E{Q_{t,i}^{g+1} Q_{t,j}^h }          \nonumber
\\&
+
\mu_j\sum_{h=0}^{l-1} {l \choose h} (-1)^{l-h}\E{Q_{t,i}^m Q_{t,j}^{h+1}}
+
\mu_{ji} \sum_{h=0}^{l} \sum_{g=0}^{m-1} {l \choose h} {m \choose g}   (-1)^{l-h}\E{Q_{t,j}^{h+1} Q_{t,i}^g }        , \nonumber
\intertext{and for the products of $\lambda_t$ and $Q_{t,i}$ given by}
% product moments of intensity and phase i
 \frac{\mathrm{d}}{\mathrm{d}t}\E{\lambda_t^mQ_{t,i}^l}
 &=
 \beta\lambda^* m \E{\lambda_t^{m-1} Q_{t,i}^l }
-  \beta m \E{\lambda_t^{m} Q_{t,i}^l }
+ \theta_i \sum_{g=0}^m \sum_{h=0}^{l-1} {m \choose g} {l \choose h}
\\&\cdot \alpha^{m-g} \E{ \lambda_t^{g+1} Q_{t,i}^h}
+  \sum_{g=0}^{m-1} {m \choose g} \alpha^{m-g}\E{\lambda_t^{g+1} Q_{t,i}^l } + \mu_i \sum_{h=0}^{l-1}{l \choose h}          \nonumber
\\&\cdot  (-1)^{l-h} \E{ \lambda_t^m Q_{t,i}^{h+1}}
+ \sum_{\substack{j=1 \\ j \ne i}}^n \sum_{h=0}^{l-1} {l \choose h} \mu_{ji}  \E{\lambda_t^m Q_{t,j}   Q_{t,i}^h}         ,  \nonumber
%
%
%\intertext{and for the total entities in queue, $\sum_{i=1}^n Q_{t,i}$,}
% moments of total in queue
 %\frac{\mathrm{d}}{\mathrm{d}t} \E{ \left(\sum_{i=1}^n Q_{t,i}\right)^m } &=
%\sum_{j=1}^n \sum_{g=0}^{m-1}{m \choose g}(-1)^{m-g} \mu_{j0}  \E{Q_{t,j}  \left(\sum_{i=1}^n Q_{t,i}\right)^g} + \sum_{g=0}^{m-1}{m \choose g}
%\\&\cdot  \E{\lambda_t \left(\sum_{i=1}^n Q_{t,i}\right)^g}                                                               .  \nonumber
%
\end{align}
where $t \geq 0$.
\end{theorem}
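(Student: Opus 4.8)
The plan is to invoke Lemma~\ref{fubinifundQ}, which reduces each differential equation to the computation of $\E{\mathcal{L}f(X_t)}$ for a suitable monomial test function $f$. Concretely, I would take $f(x) = Q_{t,i}^m$ for the first family, $f(x) = Q_{t,i}^m Q_{t,j}^l$ for the second, and $f(x) = \lambda_t^m Q_{t,i}^l$ for the third, and then apply the four-part generator $\mathcal{L}$ displayed above (excitation decay, arrivals, transfers, departures) term by term, expanding each finite difference with the binomial theorem and finally passing the expectation through by linearity.

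First I would handle the excitation-decay piece: since $\partial f/\partial\lambda_t = 0$ whenever $f$ does not involve $\lambda_t$, this term is absent in the first two families and contributes $\beta(\lambda^*-\lambda_t)\,m\,\lambda_t^{m-1}Q_{t,i}^l$ in the third, which upon taking expectations yields the $\beta\lambda^* m\E{\lambda_t^{m-1}Q_{t,i}^l} - \beta m\E{\lambda_t^m Q_{t,i}^l}$ terms. For the arrivals term I would use the expansions $(Q_{t,i}+1)^m - Q_{t,i}^m = \sum_{g=0}^{m-1}\binom{m}{g}Q_{t,i}^g$ and, where $\lambda_t$ is present, $(\lambda_t+\alpha)^m = \sum_{g=0}^m\binom{m}{g}\alpha^{m-g}\lambda_t^g$, weighted by the arrival rate $\lambda_t\theta_i$ and summed over phases (for the product family, arrivals into phase $i$ carry weight $\theta_i$ and arrivals into phase $j$ carry weight $\theta_j$, producing the two symmetric sums). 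The transfer term is treated analogously, pairing $(Q_{t,i}-1)$ in the source phase with $(Q_{t,j}+1)$ in the destination phase, weighted by $\mu_{ij}Q_{t,i}$; and the departure term uses $(Q_{t,i}-1)^m - Q_{t,i}^m = \sum_{g=0}^{m-1}\binom{m}{g}(-1)^{m-g}Q_{t,i}^g$ weighted by $\mu_{i0}Q_{t,i}$. After expansion, the departure contribution and the transfer-out-of-phase-$i$ contributions (those involving only $Q_{t,i}$) are combined into a single sum with overall rate $\mu_i = \mu_{i0} + \sum_{j\ne i}\mu_{ij}$; re-indexing $g \mapsto g-1$ absorbs the extra factor of $Q_{t,i}$ from the rate and gives the stated $\binom{m}{g-1}\mu_i(-1)^{m-g+1}$ form.

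Finally I would take expectations throughout, appealing to Lemma~\ref{fubinifundQ} and the preceding remark to justify the interchange of differentiation and expectation for these polynomially-bounded functions, and match the resulting terms against the three displayed systems. The main obstacle is organizational rather than conceptual: carefully tracking the re-indexing of the binomial sums --- in particular reconciling the $\binom{m}{g-1}$ shift and the sign $(-1)^{m-g+1}$ that arise from combining the rate-$Q_{t,i}$ weighting of departures and transfers-out with the alternating binomial expansion --- and, for the product moments, keeping the symmetric roles of $i$ and $j$ straight while separating the ``through-$k$'' transfer terms (with $k\ne i,j$) from the direct $i\leftrightarrow j$ transfers. No step uses anything beyond the generator, the binomial theorem, and Lemma~\ref{fubinifundQ}.
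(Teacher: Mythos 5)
Your proposal is correct and matches the paper's proof in all essentials: both apply the generator via Lemma~\ref{fubinifundQ} to monomial test functions, expand the finite differences with the binomial theorem, pass expectation through by linearity, and combine the departure and transfer-out terms into the rate $\mu_i$ with the $g \mapsto g-1$ re-indexing you describe. The only cosmetic difference is that the paper computes the single general moment $\E{\lambda_t^m Q_{t,i}^l Q_{t,j}^k}$ once and then specializes to $m=k=0$, $k=0$, and $m=0$, whereas you treat the three families with three separate test functions.
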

\vskip 1em
\begin{proof}
We can first observe that each of these moments can be generalized to $\E{\lambda_t^m Q_{t,i}^l Q_{t,j}^k}$. From Lemma~\ref{fubinifundQ} we see that
\begin{align*}
&\frac{\mathrm{d}}{\mathrm{d}t} \E{\lambda_t^m Q_{t,i}^l Q_{t,j}^k}
=
\text{E}\Bigg[
\beta(\lambda^* - \lambda_t)m \lambda_t^{m-1} Q_{t,i}^l Q_{t,j}^k
+
\lambda_t \theta_i \left( (\lambda_t + \alpha)^m (Q_{t,i} +1)^l Q_{t,j}^k - \lambda_t^m Q_{t,i}^l Q_{t,j}^k \right)
\\
&
\quad
+
\lambda_t \theta_j \left( (\lambda_t + \alpha)^m Q_{t,i}^l (Q_{t,j} +1)^k  - \lambda_t^m Q_{t,i}^l Q_{t,j}^k \right)
+
\sum_{\substack{x=1 \\ j \ne x \ne i}}^n \lambda_t \theta_x Q_{t,i}^l
Q_{t,j}^k \left((\lambda_t + \alpha)^m - \lambda_t^m\right)
\\
&
\quad
+
\sum_{\substack{x=1 \\ i \ne x \ne j}}^n \mu_{xi} Q_{t,x} \lambda_t^m Q_{t,j}^k \left((Q_{t,i}+1)^l - Q_{t,i}^l\right)
+
\sum_{\substack{x=1 \\ j \ne k \ne i}}^n \mu_{xj} Q_{t,x} \lambda_t^m Q_{t,i}^l \left((Q_{t,j}+1)^k - Q_{t,j}^k\right)
\\
&
\quad
+
\sum_{\substack{x=0 \\ i \ne x \ne j}}^n \mu_{ix} Q_{t,i} \lambda_t^m Q_{t,j}^k \left((Q_{t,i}-1)^l - Q_{t,i}^l\right)
+
\sum_{\substack{x=0 \\ j \ne x \ne i}}^n \mu_{jx} Q_{t,j} \lambda_t^m Q_{t,i}^l \left((Q_{t,j}-1)^k - Q_{t,j}^k\right)
\\
&
\quad
+
\mu_{ij} Q_{t,i} \lambda_t^m \left((Q_{t,i}-1)^l(Q_{t,j}+1)^k - Q_{t,i}^lQ_{t,j}^k \right)
 +
 \mu_{ji} Q_{t,j} \lambda_t^m \left((Q_{t,j}-1)^k(Q_{t,i}+1)^l - Q_{t,i}^lQ_{t,j}^k \right)
\Bigg]
\intertext{\edit{where} we have combined the transfers from one phase to another and departures from that phase into the same summation by starting the index at 0. Using the binomial theorem and linearity of expectation, we have the following:}
&
\minrev{\frac{\mathrm{d}}{\mathrm{d}t} \E{\lambda_t^m Q_{t,i}^l Q_{t,j}^k}}
=
\beta \lambda^* m \E{\lambda_t^{m-1} Q_{t,i}^l Q_{t,j}^k}
-
\beta m \E{\lambda_t^{m} Q_{t,i}^l Q_{t,j}^k}
+
\sum_{\substack{x=1 \\ j \ne x \ne i}}^n \sum_{y=0}^{m-1} {m \choose y} \theta_x \alpha^{m-y}
\\
&
\cdot
\E{ \lambda_t^{y+1} Q_{t,i}^l Q_{t,j}^k }
+
\theta_i \left( \sum_{x=0}^m  \sum_{y=0}^l  {m \choose x}   {l \choose y} \alpha^{m-x} \E{ \lambda_t^{x+1} Q_{t,i}^y Q_{t,j}^k }   - \E{ \lambda_t^{m+1} Q_{t,i}^l Q_{t,j}^k } \right)
\\
&
+
\theta_j \Bigg( \sum_{x=0}^m  \sum_{y=0}^k {m \choose x}   {k \choose y}
  \alpha^{m-x} \E{ \lambda_t^{x+1} Q_{t,i}^l Q_{t,j}^y }   - \E{ \lambda_t^{m+1} Q_{t,i}^l Q_{t,j}^k } \Bigg)
+
\sum_{\substack{x=1 \\ i \ne x \ne j}}^n \sum_{y = 0}^{l-1} {l \choose y} \mu_{xi}
\\
&
\cdot
 \E{\lambda_t^m Q_{t,x} Q_{t,i}^y Q_{t,j}^k }
+
\sum_{\substack{x=1 \\ i \ne x \ne j}}^n \sum_{y = 0}^{k-1} {k \choose y} \mu_{xj}
\E{\lambda_t^m Q_{t,x} Q_{t,i}^l Q_{t,j}^y }
+
\sum_{\substack{x=0 \\ i \ne x \ne j}}^n \sum_{y=0}^{l-1} {l \choose y} (-1)^{l-y} \mu_{ix} \E{ \lambda_t^m Q_{t,i}^{y+1} Q_{t,j}^k }
\\
&
+
\sum_{\substack{x=0 \\ i \ne x \ne j}}^n \sum_{y=0}^{k-1} {k \choose y} (-1)^{k-y} \mu_{jx} \E{ \lambda_t^m Q_{t,i}^{l} Q_{t,j}^{y+1} }
+
\mu_{ij} \Bigg( \sum_{x=0}^l  \sum_{y=0}^k  {l \choose x}   {k \choose y} (-1)^{l-x} \E{ \lambda_t^{m} Q_{t,i}^{x+1} Q_{t,j}^y }
\\
&
  - \E{ \lambda_t^{m} Q_{t,i}^{l+1} Q_{t,j}^k } \Bigg)
+
\mu_{ji} \left( \sum_{x=0}^l  \sum_{y=0}^k  {l \choose x}   {k \choose y} (-1)^{k-y} \E{ \lambda_t^{m} Q_{t,i}^{x} Q_{t,j}^{y+1} }   - \E{ \lambda_t^{m} Q_{t,i}^{l} Q_{t,j}^{k+1} } \right) .
\intertext{Now we simplify by recognizing that $\sum_{x \ne j}\mu_{ix} = \mu_i - \mu_{ij}$ and $\sum_{i \ne x \ne j} \theta_x = 1 - \theta_i - \theta_j$. This leaves us with}
&
\minrev{\frac{\mathrm{d}}{\mathrm{d}t} \E{\lambda_t^m Q_{t,i}^l Q_{t,j}^k}}
=
\beta \lambda^* m \E{\lambda_t^{m-1} Q_{t,i}^l Q_{t,j}^k}
-
\beta m \E{\lambda_t^{m} Q_{t,i}^l Q_{t,j}^k}
+
\sum_{y=0}^{m-1} {m \choose y} \alpha^{m-y} \E{ \lambda_t^{y+1} Q_{t,i}^l Q_{t,j}^k }
\\
&
+
\theta_i \sum_{x=0}^m  \sum_{y=0}^{l-1}  {m \choose x}   {l \choose y} \alpha^{m-x} \E{ \lambda_t^{x+1} Q_{t,i}^y Q_{t,j}^k }
+
\theta_j \sum_{x=0}^m  \sum_{y=0}^{k-1} {m \choose x}   {k \choose y}
  \alpha^{m-x} \E{ \lambda_t^{x+1} Q_{t,i}^l Q_{t,j}^y }
\\
&
+
\sum_{\substack{x=1 \\ i \ne x \ne j}}^n \sum_{y = 0}^{l-1} {l \choose y} \mu_{xi} \E{\lambda_t^m Q_{t,x} Q_{t,i}^y Q_{t,j}^k }
+
\sum_{\substack{x=1 \\ i \ne x \ne j}}^n \sum_{y = 0}^{k-1} {k \choose y} \mu_{xj}
\E{\lambda_t^m Q_{t,x} Q_{t,i}^l Q_{t,j}^y }
\\
&
+
\mu_i \sum_{y=0}^{l-1} {l \choose y} (-1)^{l-y}  \E{ \lambda_t^m Q_{t,i}^{y+1} Q_{t,j}^k }
+
\mu_{ij} \sum_{x=0}^l  \sum_{y=0}^{k-1}  {l \choose x}   {k \choose y} (-1)^{l-x} \E{ \lambda_t^{m} Q_{t,i}^{x+1} Q_{t,j}^y }
\\
&
+
\mu_j   \sum_{y=0}^{k-1} {k \choose y} (-1)^{k-y}  \E{ \lambda_t^m Q_{t,i}^{l} Q_{t,j}^{y+1} }
+
\mu_{ji}  \sum_{y=0}^k \sum_{x=0}^{l-1}  {l \choose x}   {k \choose y} (-1)^{k-y} \E{ \lambda_t^{m} Q_{t,i}^{x} Q_{t,j}^{y+1} }
\end{align*}
which is equivalent to each stated result when $m = k = 0$, $k = 0$, and $m = 0$, respectively.
\end{proof} 

\edit{We can now observe that we can form closed systems of linear ordinary differential equations from these equations. To do so, we \minrev{restrict our focus to the equations for} moments of combined power at most $m \in \mathbb{Z}^+$. Of course, the collection of equations that is of most practical interest is \minrev{found by setting} $m=2$, as this yields a system for the means and variances.}  This now gives rise to Corollary~\ref{hawkqueuede}, which \edit{states} the differential equations for the mean, variance, and covariances of queues driven by Hawkes processes.

\begin{corollary}\label{hawkqueuede}
Consider a queueing system with arrivals occurring in accordance to a Hawkes process $\left(\lambda_t, N_t\right)$ with dynamics given in Equation~\ref{hawkdyn} and phase-type distributed service. Then, we have the following differential equations for the mean, variance, and covariances of the number of entities in each phase and in the system as a whole:
\begin{align}
%
%
% mean of number in phase i
 \frac{\mathrm{d}}{\mathrm{d}t}\E{Q_{t,i}} &= \theta_i \E{\lambda_t} + \sum_{\substack{j=1 \\ j \ne i}}^n \mu_{ji} \E{Q_{t,j}} - \mu_{i} \E{Q_{t,i}}
 \label{meanqueuede}
%
%
% second moment of number in phase i
%\\ \frac{\mathrm{d}}{\mathrm{d}t}\E{Q_{t,i}^2} &= \theta_i \E{\lambda_t} + 2 \theta_i \E{\lambda_t Q_{t,i}} +  \sum_{\substack{j=1 \\ j \ne i}}^n \mu_{ji} \E{Q_{t,j}} + \sum_{\substack{j=1 \\ j \ne i}}^n 2\mu_{ji} \E{Q_{t,j}Q_{t,i}} + \mu_{i}\E{Q_{t,i}}\nonumber
%\\&\quad\,\,\,\,- 2\mu_{i}\E{Q_{t,i}^2}
%
%
% product moment of phases i and j
%\\ \frac{\mathrm{d}}{\mathrm{d}t} \E{Q_{t,j}Q_{t,i}} &= \theta_i \E{\lambda_t  Q_{t,j}} + \theta_j \E{\lambda_t  Q_{t,i} } - (\mu_i + \mu_j) \E{Q_{t,j} Q_{t,i}}  + \mu_{ij}\left(\E{Q_{t,i}^2} - \E{Q_{t,i}}\right)\nonumber
%\\&\quad\,\,\,\,+ \mu_{ji}\left(\E{Q_{t,j}^2} - \E{Q_{t,j}}\right)
%
%
% product moment of intensity and phase i
%\\ \frac{\mathrm{d}}{\mathrm{d}t} \E{\lambda_t Q_{t,i}} &= \beta\lambda^* \E{ Q_{t,i} }
%+(\alpha - \beta - \mu_i) \E{\lambda_t Q_{t,i} }
%+  \alpha \theta_i \E{ \lambda_t } + \theta_i \E{\lambda_t^2}
%+ \sum_{\substack{j=1 \\ j \ne i}}^n \mu_{ji}  \E{\lambda_t Q_{t,j}}
%
%
% variance of phase i
 \\ \frac{\mathrm{d}}{\mathrm{d}t}  \Var{Q_{t,i}} &= \theta_i \E{\lambda_t} + 2 \theta_i \Cov{\lambda_t, Q_{t,i}}
 + 2\sum_{\substack{j=1 \\ j \ne i}}^n \mu_{ji} \Cov{Q_{t,i},Q_{t,j}} + \mu_{i}\E{Q_{t,i}}
 \label{varqueuede}
\\&\quad\,\,\,\,  +  \sum_{\substack{j=1 \\ j \ne i}}^n \mu_{ji} \E{Q_{t,j}} - 2\mu_{i}\Var{Q_{t,i}} \nonumber
%
%
% covariance of intensity and phase i
\\ \frac{\mathrm{d}}{\mathrm{d}t}\Cov{\lambda_t, Q_{t,i}} &=
(\alpha - \beta - \mu_i) \Cov{\lambda_t, Q_{t,i} }
+  \alpha \theta_i \E{ \lambda_t }
+ \sum_{\substack{j=1 \\ j \ne i}}^n \mu_{ji}  \Cov{\lambda_t, Q_{t,j}}
\label{covintensityde}
\\&\quad\,\,\,\, + \theta_i \Var{\lambda_t} \nonumber
%
%
% covariance of phases i and j
\\ \frac{\mathrm{d}}{\mathrm{d}t}\Cov{Q_{t,i}, Q_{t,j}} &=
\theta_i \Cov{\lambda_t, Q_{t,j}}
+
\theta_j \Cov{\lambda_t, Q_{t,i}}
-
(\mu_i + \mu_j)\Cov{Q_{t,i},Q_{t,j}}
\label{covqueuede}
\\&\quad\,\,\,\,
\nonumber
+
\sum_{\substack{k=1 \\ k \ne i}}^n \mu_{ki} \Cov{Q_{t,k}, Q_{t,j}}
+
\sum_{\substack{k=1 \\ k \ne i}}^n \mu_{kj} \Cov{Q_{t,k}, Q_{t,i}}
-
\mu_{ij} \E{Q_{t,i}}
-
\mu_{ji} \E{Q_{t,j}} .
%
%
% mean of total in queue
%\\ \frac{\mathrm{d}}{\mathrm{d}t}\E{\sum_{i=1}^nQ_{t,i}} &= \E{\lambda_t } - \sum_{j=1}^n \mu_{j0}  \E{Q_{t,j}}
%
%
% second moment of total in queue
%\\ \frac{\mathrm{d}}{\mathrm{d}t}\E{\left(\sum_{i=1}^nQ_{t,i}\right)^2} &=   \E{\lambda_t} + 2  \sum_{i=1}^n\E{\lambda_t  Q_{t,i}}
%+ \sum_{j=1}^n \mu_{j0}  \E{Q_{t,j}} - \sum_{j=1}^n \sum_{i=1}^n \mu_{j0}  \E{Q_{t,j}Q_{t,i}}
%
%
% variance of total in queue
%\\ \frac{\mathrm{d}}{\mathrm{d}t} \Var{\sum_{i=1}^nQ_{t,i}} &= \E{\lambda_t}
%+ 2  \sum_{i=1}^n\Cov{\lambda_t, Q_{t,i}}
% - 2\sum_{j=1}^n \sum_{i=1}^n \mu_{j0}  \Cov{Q_{t,i},Q_{t,j}}
%\\&\quad\,\,\,\, + \sum_{i=1}^n \mu_{i0}  \E{Q_{t,i}}   \nonumber
%
\end{align}
\end{corollary}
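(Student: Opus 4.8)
The plan is to obtain Corollary~\ref{hawkqueuede} as a direct specialization of the moment ODEs in Theorem~\ref{hawkdethm}, combined with the Hawkes-process moment equations of Proposition~\ref{hawkgeneral}, after converting raw moments into centered ones. Each stated identity is a linear combination of lower-order moment equations together with a product-rule subtraction, so the content is careful bookkeeping rather than new machinery. Throughout, the key observation is that when all powers are at most two the binomial sums in Theorem~\ref{hawkdethm} collapse to one or two terms.

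First, for $\frac{\mathrm{d}}{\mathrm{d}t}\E{Q_{t,i}}$ I would set $m=1$ in the equation for $\E{Q_{t,i}^m}$: the sum $\sum_{g=0}^{m-1}$ retains only the $g=0$ term and $\sum_{g=1}^{m}{m\choose g-1}\mu_i(-1)^{m-g+1}\E{Q_{t,i}^g}$ reduces to $-\mu_i\E{Q_{t,i}}$, giving \eqref{meanqueuede} at once. For $\frac{\mathrm{d}}{\mathrm{d}t}\Var{Q_{t,i}}$ I would write $\Var{Q_{t,i}}=\E{Q_{t,i}^2}-\E{Q_{t,i}}^2$, take $m=2$ in the same family for $\frac{\mathrm{d}}{\mathrm{d}t}\E{Q_{t,i}^2}$, and subtract $2\E{Q_{t,i}}\frac{\mathrm{d}}{\mathrm{d}t}\E{Q_{t,i}}$ using \eqref{meanqueuede}. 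Here the $(-1)^{m-g+1}$ terms contribute both the departure drift $-2\mu_i\E{Q_{t,i}^2}$, which becomes $-2\mu_i\Var{Q_{t,i}}$ after subtraction, and the extra term $+\mu_i\E{Q_{t,i}}$ with no mean-equation analogue; the remaining products $2\theta_i\E{\lambda_t Q_{t,i}}$ and $2\mu_{ji}\E{Q_{t,j}Q_{t,i}}$ recombine with the subtracted pieces into $2\theta_i\Cov{\lambda_t,Q_{t,i}}$ and $2\mu_{ji}\Cov{Q_{t,i},Q_{t,j}}$, yielding \eqref{varqueuede}.

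For $\frac{\mathrm{d}}{\mathrm{d}t}\Cov{\lambda_t,Q_{t,i}}$ I would set $m=l=1$ in the equation for $\E{\lambda_t^m Q_{t,i}^l}$ and subtract $\frac{\mathrm{d}}{\mathrm{d}t}\E{\lambda_t}\cdot\E{Q_{t,i}}+\E{\lambda_t}\frac{\mathrm{d}}{\mathrm{d}t}\E{Q_{t,i}}$, where $\frac{\mathrm{d}}{\mathrm{d}t}\E{\lambda_t}=\beta\lambda^*-(\beta-\alpha)\E{\lambda_t}$ comes from Proposition~\ref{hawkgeneral} with $m=1$; the $\beta\lambda^*\E{Q_{t,i}}$ terms cancel, $\theta_i(\E{\lambda_t^2}-\E{\lambda_t}^2)$ collapses to $\theta_i\Var{\lambda_t}$, and the surviving $\lambda_t Q_{t,i}$ terms assemble into $(\alpha-\beta-\mu_i)\Cov{\lambda_t,Q_{t,i}}$, giving \eqref{covintensityde}. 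The same device applied to the equation for $\E{Q_{t,i}^m Q_{t,j}^l}$ with $m=l=1$, subtracting $\E{Q_{t,j}}\frac{\mathrm{d}}{\mathrm{d}t}\E{Q_{t,i}}+\E{Q_{t,i}}\frac{\mathrm{d}}{\mathrm{d}t}\E{Q_{t,j}}$, is intended to produce \eqref{covqueuede}.

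I expect the last identity to be the main obstacle. In the equation for $\E{Q_{t,i}^m Q_{t,j}^l}$ the transfer and departure sums run over the restricted index sets $i\ne k\ne j$, while \eqref{covqueuede} is written with the full sums $\sum_{k\ne i}\mu_{ki}\Cov{Q_{t,k},Q_{t,j}}$ and $\sum_{k\ne j}\mu_{kj}\Cov{Q_{t,k},Q_{t,i}}$; reconciling the two requires re-adding the $k=j$ and $k=i$ boundary terms and tracking how the $\mu_{ij}$ and $\mu_{ji}$ pieces combine. This is precisely where the $-\mu_{ij}\E{Q_{t,i}}-\mu_{ji}\E{Q_{t,j}}$ corrections arise, reflecting that a single transfer between phases $i$ and $j$ alters both coordinates simultaneously. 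Getting the signs on the $(-1)^{l-x}$ and $(-1)^{k-y}$ factors right, and confirming that every second-order $Q_{t,i}Q_{t,j}$-type product cancels against the product-rule terms so that only the stated means and covariances remain, is the delicate part; the rest is routine simplification.
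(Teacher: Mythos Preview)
Your proposal is correct and follows exactly the route the paper intends: the corollary is presented without a detailed proof, as the text immediately preceding it states that it arises by restricting the moment equations of Theorem~\ref{hawkdethm} to combined power at most two. Your plan of specializing $m,l$ to $1$ or $2$, subtracting the product-rule contributions, and invoking Proposition~\ref{hawkgeneral} for the intensity moments is precisely that specialization carried out explicitly, and your identification of the index-set reconciliation for the $\mu_{ij}$ transfer terms in \eqref{covqueuede} is the only genuinely delicate step.
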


\edit{We will find that it is quite useful to also be able to state the equations in Corollary~\ref{hawkqueuede} in linear algebraic form. Recall that the vector of the number in each phase of service is $Q_t \in \mathbb{N}^{n}$, the distribution of arrivals into phases is $\theta \in [0,1]^n$, and the sub-generator-matrix for the $n$ phases of service is $S \in \mathbb{R}^{n \times n}$ so that $S_{i,i} = - \mu_i$ for each $i \in \{1, \dots, n\}$ and $S_{i,j} = \mu_{i,j}$ for all $j \ne i$.  We now also incorporate the notation $\diag{x} \in \mathbb{R}^{n \times n}$ for $x \in \mathbb{R}^n$ as $\diag{x} \equiv \sum_{i=1}^n \textbf{V}_i x \textbf{v}_i^\T$, where $\textbf{v}_i \in \mathbb{R}^{n}$ is the unit column vector in the direction of the $i^\text{th}$ coordinate and $\textbf{V}_i =  \textbf{v}_i \textbf{v}_i^\T$, meaning that the $i^\text{th}$ diagonal element is 1 and the rest are 0.
Together, we have that the vector form of Equation~\ref{meanqueuede} is
$$
\frac{\mathrm{d}}{\mathrm{d}t}\E{Q_t}
=
\theta \E{\lambda_t} + S^\T \E{Q_t},
$$
the vector form of Equation~\ref{covintensityde} is
$$
\frac{\mathrm{d}}{\mathrm{d}t}\Cov{\lambda_t, Q_t}
=
\left(S^\T - (\beta - \alpha)I\right)\Cov{\lambda_t, Q_t} + \alpha \theta \E{\lambda_t} + \theta \Var{\lambda_t},
$$
and the matrix form of Equations~\ref{varqueuede} and~\ref{covqueuede} is
\begin{align*}
\frac{\mathrm{d}}{\mathrm{d}t}\Cov{Q_t, Q_t}
&=
S^\T\Cov{Q_t, Q_t} + \Cov{Q_t, Q_t}S + \theta \Cov{\lambda_t, Q_t}^\T + \Cov{\lambda_t, Q_t}\theta^\T
\\&\quad
+ \diag{\theta \E{\lambda_t} + S^\T \E{Q_t}} - S^\T \diag{\E{Q_t}} - \diag{\E{Q_t}}S
\end{align*}
where the diagonal elements of the matrix $\Cov{Q_t, Q_t}$ correspond to the variance of the number in each phase of service and the off-diagonal elements represent the covariance between two phases of service. We can now use the  technical lemmas in Subsection~\ref{sslemmas} to find explicit linear algebraic solutions to the closed system of differential equations in Corollary~\ref{hawkqueuede}.
}

\begin{theorem}\label{hawkqueuegeneral}
Consider a queueing system with arrivals occurring in accordance to a Hawkes process $\left(\lambda_t, N_t\right)$ with dynamics given in Equation~\ref{hawkdyn} \edit{ with $\alpha < \beta$} and phase-type distributed service. Let $S \in \mathbb{R}^{n \times n}$ be the sub-generator matrix for the transient states in the phase-distribution CTMC and let $\theta \in [0,1]^n$ be the initial distribution for arrivals to these states. If \edit{$S + (\beta - \alpha)I$ is} invertible, then \edit{the vector of the mean number in service in each phase of service is}
\begin{align}
&
%
%
%
%
%
%
% MEAN
\E{Q_t}
=
\lambda_\infty\hspace{-.1cm} \left(- S^\mathrm{T}\right)^{-1}\big(I - e^{S^\mathrm{T} t}\big)\theta
-
\left( \lambda_0 - \lambda_\infty \right)\left(S^\mathrm{T} + (\beta - \alpha)I\right)^{-1}\hspace{-.1cm}\big(e^{-(\beta - \alpha) t} I - e^{S^\mathrm{T} t}\big)\theta
\intertext{where $\lambda_\infty = \frac{\beta \lambda^*}{\beta - \alpha}$. Further, \edit{the vector of covariances between the intensity and each phase of service is}
\vspace{-.25cm}
}
&
%
%
%
%
%
% COVARIANCE VECTOR
\Cov{\lambda_t, Q_t}
=
\frac{\alpha(2\beta - \alpha)\lambda_\infty}{2(\beta - \alpha)}\left((\beta - \alpha)I - S^\mathrm{T}\right)^{-1}\left(I - e^{(S^\mathrm{T} - (\beta - \alpha)I)t}\right)\theta
-
\frac{\alpha\beta(\lambda_0 - \lambda_\infty)}{\beta - \alpha}
\nonumber
\\
&
\quad
\cdot \left(S^\mathrm{T}\right)^{-1} \left(e^{-(\beta - \alpha)t}I - e^{(S^\mathrm{T} - (\beta - \alpha)I)t}\right) \theta
+
\frac{\alpha^2 (2\lambda_0 - \lambda_\infty)}{2(\beta - \alpha)} \left(S^\mathrm{T} + (\beta - \alpha)I\right)^{-1}
\nonumber
\\
&
\quad
\cdot \left(e^{-2(\beta - \alpha)t}I - e^{(S^\mathrm{T} - (\beta - \alpha)I)t}\right) \theta  \, .
\intertext{Finally, \edit{the matrix of covariances between phases of service is given by}
\vspace{-.25cm}
}
&
%
%
%
%
%
% COVARIANCE MATRIX
\Cov{Q_t, Q_t}
=
\frac{\alpha(2\beta - \alpha)\lambda_\infty}{2(\beta - \alpha)}
\left((\beta - \alpha)I - S^\T\right)^{-1}
\Bigg(
2(\beta - \alpha)e^{S^\T t} M_{0,\theta, S}(t)e^{S t}
+
\theta \theta^\T
-
e^{S^\T t}
\theta \theta^\T
e^{S t}
\nonumber
\\
&
+
e^{S^\T t}
\theta \theta^\T
\left(e^{-(\beta - \alpha) t}I - e^{S t} \right)
((\beta - \alpha)I + S)^{-1}((\beta - \alpha)I - S)
+
\left((\beta - \alpha)I - S^\T\right)\left((\beta - \alpha)I + S^\T\right)^{-1}
\nonumber
\\
&
\cdot
\left(e^{-(\beta - \alpha) t}I - e^{S^\T t}\right) \theta \theta^\T e^{S t}
\Bigg)
\left((\beta - \alpha)I - S\right)^{-1}
+
\frac{\alpha\beta(\lambda_0 - \lambda_\infty)}{\beta - \alpha}
\left( S^\T\right)^{-1}
\Bigg(
(\beta - \alpha)e^{S^\T t} M_{-(\beta - \alpha),\theta, S}(t)e^{S t}
\nonumber
\\
&
+
e^{-(\beta - \alpha) t}\theta \theta^\T
-
e^{S^\T t}
\theta \theta^\T
e^{S t}
-
e^{S^\T t}
\theta \theta^\T  \left(e^{-(\beta - \alpha) t}I - e^{S t} \right)
((\beta - \alpha)I + S)^{-1}S
-
S^\T\left((\beta - \alpha)I + S^\T\right)^{-1}
\nonumber
\\
&
\cdot
\Big(e^{-(\beta - \alpha) t}I - e^{S^\T t}\Big)
\theta \theta^\T e^{S t}
\Bigg)
S^{-1}
-
\frac{\alpha^2(2\lambda_0 - \lambda_\infty)}{2(\beta - \alpha)}\big((\beta - \alpha)I + S^\T\big)^{-1}
\Bigg(
e^{-2(\beta - \alpha) t}\theta \theta^\T
-
e^{S^\T t}
\theta \theta^\T
e^{S t}
\nonumber
\\
&
-
e^{S^\T t}
\theta \theta^\T  \left(e^{-(\beta - \alpha) t}I - e^{S t} \right)
-
\left(e^{-(\beta - \alpha) t}I - e^{S^\T t}\right) \theta \theta^\T e^{S t}
\Bigg)
\left((\beta - \alpha)I + S\right)^{-1}
-
\lambda_\infty
\mathrm{diag}\Big(
\left(S^\mathrm{T}\right)^{-1}
  \nonumber
 \\
 &
 \cdot
  \left(I - e^{S^\mathrm{T} t}\right)\theta
\Big)
-
 ( \lambda_0 - \lambda_\infty )
\diag{
\left(S^\mathrm{T} + (\beta - \alpha)I\right)^{-1}\left(e^{-(\beta - \alpha) t} I - e^{S^\mathrm{T} t}\right)\theta
}
\end{align}
where all $t \geq 0$.
\end{theorem}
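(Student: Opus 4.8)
The plan is to solve, in turn, the three closed linear ODE systems whose vector and matrix forms are recorded just after Corollary~\ref{hawkqueuede}, substituting the exact Hawkes moments from Proposition~\ref{hawksolveprop} into the forcing terms and invoking the linear-algebra lemmas of Subsection~\ref{sslemmas}. All covariances and $\E{Q_0}$ vanish at $t=0$ since the system starts empty. I will also use that a transient phase-type sub-generator $S$ has spectrum in the open left half-plane, so that $S$, $(\beta-\alpha)I-S$, and $(\beta-\alpha)I-S^\mathrm{T}$ are automatically invertible; the only non-automatic requirement, invertibility of $S+(\beta-\alpha)I$ (equivalently $S^\mathrm{T}+(\beta-\alpha)I$), is precisely the stated hypothesis, and I would verify that each invocation of a lemma below needs nothing more.

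For $\E{Q_t}$, substitute $\E{\lambda_t}=\lambda_\infty+(\lambda_0-\lambda_\infty)e^{-(\beta-\alpha)t}$ into $\frac{\mathrm{d}}{\mathrm{d}t}\E{Q_t}=S^\mathrm{T}\E{Q_t}+\theta\E{\lambda_t}$. The forcing is then $\lambda_\infty\theta+(\lambda_0-\lambda_\infty)\theta\,e^{-(\beta-\alpha)t}$, so Lemma~\ref{mainlemma} applies with $L=-S^\mathrm{T}$, $g_0=0$, and $\gamma\in\{0,-(\beta-\alpha)\}$, producing the stated expression. For $\Cov{\lambda_t,Q_t}$, substitute $\E{\lambda_t}$ and $\Var{\lambda_t}$ from Proposition~\ref{hawksolveprop} into $\frac{\mathrm{d}}{\mathrm{d}t}\Cov{\lambda_t,Q_t}=\left(S^\mathrm{T}-(\beta-\alpha)I\right)\Cov{\lambda_t,Q_t}+\alpha\theta\E{\lambda_t}+\theta\Var{\lambda_t}$, then collect the coefficients of $1$, $e^{-(\beta-\alpha)t}$, and $e^{-2(\beta-\alpha)t}$ and simplify (for instance $\alpha\lambda_\infty+\frac{\alpha^2\lambda_\infty}{2(\beta-\alpha)}=\frac{\alpha(2\beta-\alpha)\lambda_\infty}{2(\beta-\alpha)}$). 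Lemma~\ref{mainlemma} then applies with $L=(\beta-\alpha)I-S^\mathrm{T}$, $g_0=0$, and $\gamma\in\{0,-(\beta-\alpha),-2(\beta-\alpha)\}$, and the resolvents $(L+\gamma I)^{-1}$ that appear are exactly the three in the statement.

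For $\Cov{Q_t,Q_t}$, the matrix form of Equations~\ref{varqueuede} and~\ref{covqueuede} is a Lyapunov-type ODE $\dot{X}=S^\mathrm{T}X+XS+G(t)$ with $X(0)=0$, so $X(t)=\int_0^t e^{S^\mathrm{T}(t-s)}G(s)e^{S(t-s)}\,\mathrm{d}s$. Split $G$ into the $\mathrm{diag}$ part and the part $\theta\Cov{\lambda_s,Q_s}^\mathrm{T}+\Cov{\lambda_s,Q_s}\theta^\mathrm{T}$. For the $\mathrm{diag}$ part, note $\theta\E{\lambda_s}+S^\mathrm{T}\E{Q_s}=\frac{\mathrm{d}}{\mathrm{d}s}\E{Q_s}$, so by the product rule the integrand equals the total $s$-derivative of $e^{S^\mathrm{T}(t-s)}\diag{\E{Q_s}}e^{S(t-s)}$, and this contribution integrates to $\diag{\E{Q_t}}$, which is precisely the last two $\mathrm{diag}$ terms of the statement once $\E{Q_t}$ is inserted. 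For the remaining part, insert the formula for $\Cov{\lambda_s,Q_s}$ just obtained; each of its three summands is a resolvent of $S^\mathrm{T}$ (hence commuting with the relevant matrix exponentials) times $\left(e^{as}I-e^{(S^\mathrm{T}-(\beta-\alpha)I)s}\right)\theta$ for $a\in\{0,-(\beta-\alpha),-2(\beta-\alpha)\}$. After moving the fixed exponentials $e^{S^\mathrm{T}t}$ and $e^{St}$ outside the integral and using $e^{-S^\mathrm{T}s}e^{(S^\mathrm{T}-(\beta-\alpha)I)s}=e^{-(\beta-\alpha)s}I$, the integrand of each of the three groups matches the integrand of Lemma~\ref{doublemintgum} with $L=S$, $\nu=\theta$, $\gamma=\beta-\alpha$, and $\eta=0,-1,-2$ respectively (with $c$ equal to $\pm$ the relevant scalar to absorb the sign of $\left((\eta+1)\gamma I-L^\mathrm{T}\right)^{-1}$). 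Applying Lemma~\ref{doublemintgum} to each group yields the three bracketed expressions; the coefficient $(\eta+2)\gamma$ of $M_{\eta\gamma,\nu,L}(t)$ equals $2(\beta-\alpha)$, $\beta-\alpha$, and $0$ for $\eta=0,-1,-2$, which explains why $M_{0,\theta,S}(t)$ and $M_{-(\beta-\alpha),\theta,S}(t)$ appear in the first two groups and no $M$-matrix appears in the third.

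The first two parts are routine once the Hawkes moments are substituted and the forcing is simplified. The main obstacle is the bookkeeping in the third part: matching products of $(S^\mathrm{T})^{-1}$, $\left((\beta-\alpha)I\pm S^\mathrm{T}\right)^{-1}$, matrix exponentials, and $\theta\theta^\mathrm{T}$ — none of which commute past $\theta\theta^\mathrm{T}$ — to the exact template of Lemma~\ref{doublemintgum}, keeping transposes and the left-versus-right placement of the resolvents straight, identifying the correct $(\gamma,\eta,c)$ for each decay rate, and confirming the cancellation that removes the $M$-term in the $\eta=-2$ group before the final algebraic simplification.
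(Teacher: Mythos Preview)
Your proposal is correct and follows essentially the same route as the paper: apply Lemma~\ref{mainlemma} to the vector ODEs for $\E{Q_t}$ and $\Cov{\lambda_t,Q_t}$ after substituting the Hawkes moments, then handle the Lyapunov ODE for $\Cov{Q_t,Q_t}$ by the product-rule trick on the $\diag$ terms (yielding $\diag{\E{Q_t}}$) and by Lemma~\ref{doublemintgum} on the $\theta\Cov{\lambda_s,Q_s}^\mathrm{T}+\Cov{\lambda_s,Q_s}\theta^\mathrm{T}$ terms with $\eta\in\{0,-1,-2\}$, noting that $(\eta+2)\gamma=0$ kills the $M$-matrix in the last group. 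The only cosmetic difference is that the paper conjugates by $e^{-S^\mathrm{T}t},e^{-St}$ first and multiplies back at the end, whereas you write the variation-of-constants integral $\int_0^t e^{S^\mathrm{T}(t-s)}G(s)e^{S(t-s)}\,\mathrm{d}s$ directly; these are the same computation.
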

\begin{proof}
Throughout this proof we use the fact that a matrix being invertible implies that its transpose is invertible as well. To begin, we can see from Corollary~\ref{hawkqueuede} that
$$
\frac{\mathrm{d}}{\mathrm{d}t} \E{Q_t} = S^\mathrm{T}\E{Q_t} + \theta\E{\lambda_t} = S^\mathrm{T}\E{Q_t} + \theta\left( \lambda_\infty + \left(\lambda_0 - \lambda_\infty\right)e^{-(\beta - \alpha)t}\right)
$$
and so we apply Lemma~\ref{mainlemma}. Let $\nu_1 = \theta \lambda_\infty$ and $\eta_1 = \gamma_1 = 0$, and let $\nu_2 = \theta(\lambda_0 - \lambda_\infty)$, $\eta_2 = 0$, and $\gamma_2 = -(\beta - \alpha)$. We assume that the queue starts empty. Then, we have
\begin{align*}
\E{Q_t} &= -\left(S^\mathrm{T}\right)^{-1}\theta \lambda_\infty + \left(S^\mathrm{T}\right)^{-1}e^{-S^\mathrm{T} t}\theta \lambda_\infty - \left(S^\mathrm{T} + (\beta - \alpha)I\right)^{-1}\theta(\lambda_0 - \lambda_\infty)e^{-(\beta - \alpha)t}
\\&\quad
+ \left(S^\mathrm{T} + (\beta - \alpha)I\right)^{-1}e^{-S^\mathrm{T}t}\theta(\lambda_0 - \lambda_\infty)
\end{align*}
which now simplifies to the stated result. \edit{Note \minrev{that} $S$ is invertible because it is diagonally dominant by definition and we have assumed the invertibility of $S + (\beta - \alpha)I$, which implies non-singularity of the respective transposes.} We find the stated result for $\Cov{\lambda_t, Q_t}$ through repeating the same technique to the corresponding differential equation systems, where \edit{again we make use of the linear algebraic representation. Thus, we are left to} solve for the covariance matrix. Note that from Corollary~\ref{hawkqueuede}, the variance of each phase and the covariance between phases can form one linear algebraic form as the covariance matrix, as shown below.
\begin{align*}
\frac{\mathrm{d}}{\mathrm{d}t}\Cov{Q_t, Q_t}
&=
S^\T \Cov{Q_t, Q_t}
+
\Cov{Q_t, Q_t} S
+
\theta \Cov{\lambda_t, Q_t}^\T
+
\Cov{\lambda_t, Q_t} \theta^\T
\\
&
\quad
+
\diag{\theta \E{\lambda_t} + S^\T \E{Q_t}}
-
S^\T\diag{\E{Q_t}}
-
\diag{\E{Q_t}}S
\end{align*}
Using the product rule and multiplying through by matrix exponentials on the right and left, we can also express this as below:
\begin{align*}
\frac{\mathrm{d}}{\mathrm{d}t}\left(e^{-S^\T t} \Cov{Q_t, Q_t} e^{- S t} \right)
&=
e^{-S^\T t}
\theta \Cov{\lambda_t, Q_t}^\T
e^{- S t}
+
e^{-S^\T t}
\Cov{\lambda_t, Q_t} \theta^\T
e^{- S t}
\\
&
\quad
+
e^{-S^\T t}
\diag{\theta \E{\lambda_t} + S^\T \E{Q_t}}
e^{- S t}
-
e^{-S^\T t}
S^\T\diag{\E{Q_t}}
e^{- S t}
\\
&
\quad
-
e^{-S^\T t}
\diag{\E{Q_t}}S
e^{- S t} .
\end{align*}
For the pair of $\Cov{\lambda_t, Q_t}$ terms, we use Lemma~\ref{doublemintgum} in conjunction with the explicit function for $\Cov{\lambda_t, Q_t}$ to find
\begin{align*}
&
\int_0^t
\left(
e^{-S^\T s}
\theta \Cov{\lambda_s, Q_s}^\T
e^{- S s}
+
e^{-S^\T s}
\Cov{\lambda_s, Q_s} \theta^\T
e^{- S s}
\right) \mathrm{d}s
\\
&
=
\frac{\alpha(2\beta - \alpha)\lambda_\infty}{2(\beta - \alpha)}
\left((\beta - \alpha)I - S^\T\right)^{-1}
\Bigg(
2(\beta - \alpha) M_{0,\theta, S}(t)
+
e^{-S^\T t}\theta \theta^\T e^{- S t}
-
\theta\theta^\T
+
\theta \theta^\T \left(e^{-((\beta - \alpha)I + S) t} - I \right)
\\
&
\cdot
((\beta - \alpha)I + S)^{-1}((\beta - \alpha)I - S)
+
\left((\beta - \alpha)I - S^\T\right)\left((\beta - \alpha)I + S^\T\right)^{-1}\left(e^{-((\beta - \alpha)I + S^\T) t} - I\right) \theta \theta^\T
\Bigg)
\\
&
\cdot
\left((\beta - \alpha)I - S\right)^{-1}
+
\frac{\alpha\beta(\lambda_0 - \lambda_\infty)}{\beta - \alpha}
\left( S^\T\right)^{-1}
\Bigg(
(\beta - \alpha) M_{-(\beta - \alpha),\theta, S}(t)
+
e^{-((\beta - \alpha)I + S^\T) t}\theta \theta^\T e^{- S t}
-
\theta\theta^\T
\\
&
-
\theta \theta^\T  \left(e^{-((\beta - \alpha)I + S) t} - I \right)
((\beta - \alpha)I + S)^{-1}S
-
S^\T\left((\beta - \alpha)I + S^\T\right)^{-1}\left(e^{-((\beta - \alpha)I + S^\T) t} - I\right) \theta \theta^\T
\Bigg)
S^{-1}
\\
&
-
\frac{\alpha^2(2\lambda_0 - \lambda_\infty)}{2(\beta - \alpha)}\big((\beta - \alpha)I + S^\T\big)^{-1}
\Bigg(
e^{-(2(\beta - \alpha)I + S^\T) t}\theta \theta^\T e^{- S t}
-
\theta\theta^\T
-
\theta \theta^\T  \left(e^{-((\beta - \alpha)I + S) t} - I \right)
\\
&
-
\left(e^{-((\beta - \alpha)I + S^\T) t} - I\right) \theta \theta^\T
\Bigg)
\left((\beta - \alpha)I + S\right)^{-1}
\end{align*}
and so we now integrate the remaining terms in the covariance matrix differential equations. Note that the product rule for three terms is $(fgh)' = f'gh + fg'h + fgh'$. We have already used this in concatenating the covariance matrix terms in the differential equation, and we can now make use of it again. Recall that $\frac{\mathrm{d}}{\mathrm{d}t} \E{Q_t} = S^\mathrm{T}\E{Q_t} + \theta\E{\lambda_t}$. Using this realization, the integral of the remaining three terms is
\begin{align*}
&
\int_0^t
\Big(
e^{-S^\T s}
\diag{\theta \E{\lambda_s} + S^\T \E{Q_s}}
e^{- S s}
-
e^{-S^\T s}
S^\T\diag{\E{Q_s}}
e^{- S s}
-
e^{-S^\T s}
\diag{\E{Q_s}}S
e^{- S s}
\Big)
\mathrm{d}s
\\
&
=
e^{-S^\T t}
\diag{\E{Q_t}}
e^{- S t}
%\\
%&
%=
%e^{-S^\T t}
%\diag{
%- \left(S^\mathrm{T}\right)^{-1}\left(I - e^{S^\mathrm{T} t}\right)\theta \lambda_\infty
%- \left(S^\mathrm{T} + (\beta - \alpha)I\right)^{-1}\left(e^{-(\beta - \alpha) t} I - e^{S^\mathrm{T} t}\right)\theta \left( \lambda_0 - \lambda_\infty \right)
%}
%e^{- S t}
\\
&
=
-
e^{-S^\T t}
\diag{
\left(S^\mathrm{T}\right)^{-1}\left(I - e^{S^\mathrm{T} t}\right)\theta
}
 e^{- S t} \lambda_\infty
 -
e^{-S^\T t}
\diag{
\left(S^\mathrm{T} + (\beta - \alpha)I\right)^{-1}\left(e^{-(\beta - \alpha) t} I - e^{S^\mathrm{T} t}\right)\theta
}
 \\
 &
 \qquad
\cdot
e^{- S t} ( \lambda_0 - \lambda_\infty )
\end{align*}
\edit{where} we are justified in moving the differentiation through the diagonalization and distributing it across sums via the definition of diagonalization as a linear combination. Combining this with the integral for the covariance between the queue and intensity and multiplying each side by the corresponding exponentials, we achieve the stated result.
\end{proof} 

 \vspace{-.2in}
 \begin{figure}[h]
\begin{center}	
\includegraphics[width=.6\textwidth]{./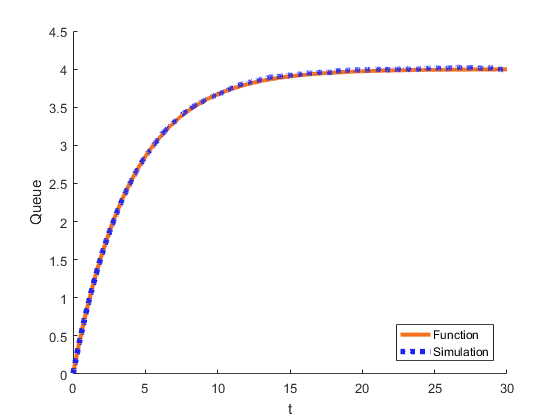}
\caption{Example Mean of the ${Hawkes/PH/\infty}$ Queue with Sub-Generator Matrix ${S_{\mathrm{Cox}}}$ as in Equation~\ref{coxdef}.}
\label{hawkesautocofig}
\end{center}
\end{figure}
 \vspace{-.2in}

As a brief example, consider a Hawkes process driven queueing system with infinite servers and suppose that the service is phase-type distributed with initial distribution $\theta = \onevec{1}$ and the following sub-generator matrix:
\begin{align}\label{coxdef}
S_{\mathrm{Cox}}
=
\begin{bmatrix}
-4 & 3 & 0 & 0 & 0\\
0 & -2 & 1 & 0 & 0\\
0 & 0 & -3 & 2 & 0\\
0 & 0 & 0 & -5 & 4\\
0 & 0 & 0 & 0 & -1
\end{bmatrix}
.
\end{align}
This is referred to as a Coxian distribution. It is characterized by each phase of service having an associated probability of either system departure or advancement to the next phase upon service completion. In this example, $\lambda^* = 1$, $\alpha = \frac 3 4$, and $\beta = 1$. The simulation is based on 100,000 replications.

\begin{remark}
\edit{We now note that the assumed nonsingularity of $S + (\beta - \alpha)I$ is necessary to implement the technical lemmas, but need not hold in order for a closed form solution to exist. If these conditions do not hold, one can instead make use of the structure of invertibility that is implied by a specific phase-type distribution. In Corollaries \ref{hawkqueueerlang} and \ref{hawkqueuehyper}, we demonstrate this for Erlang and hyper-exponential service, respectively. Like we have seen in Theorem~\ref{hawkqueuegeneral}, these expressions can be found through solving systems of differential equations provided by Corollary~\ref{hawkqueuede}.}
\\ \end{remark}

\edit{We start with the case of service times following a} Erlang distribution. In this case, we define $N \in \mathbb{R}^{n \times n}$ as the matrix of all ones on the first lower diagonal and zeros otherwise. Then, $S^\T = n\mu(N - I)$ for this phase-type distribution. Observe that $N$ is a nilpotent matrix of a particular structure: for $k \in \mathbb{N}$, $N^k$ is the matrix of all ones on the $k^\text{th}$ lower diagonal if $k \leq n -1$ and is the zero matrix otherwise. Additionally, in this case $\theta = \textbf{v}_1$ as all arrivals occur in the first phase. With this in hand, we see that
\begin{align*}
\left(M_{\gamma, \textbf{v}_1, n\mu(I-N^\T)}(t)\right)_{i,j} &= \left(M_{\gamma + 2n\mu, \textbf{v}_1,n\mu N^\T}(t)\right)_{i,j}
\\
&
=
\begin{cases}
{i+j-2 \choose i - 1}(n\mu)^{i+j-2}\frac{e^{(\gamma +  2n\mu)t}\sum_{k=0}^{i+j-2}\frac{(-(\gamma + 2n\mu)t)^k}{k!} - 1}{(\gamma + 2n\mu)^{i+j-1}} & \text{if $\gamma + 2n\mu \ne 0$}\\
\frac{(tn\mu)^{i+j-1}}{n\mu(i-1)!(j-1)!(i+j-1)} & \text{if $\gamma + 2n\mu = 0$}
\end{cases}
\end{align*}
and we make use of this in the following corollary.

\begin{corollary}\label{hawkqueueerlang}
Consider a queueing system with arrivals occurring in accordance to a Hawkes process $\left(\lambda_t, N_t\right)$ with dynamics given in Equation~\ref{hawkdyn} \edit{ with $\alpha < \beta$} and Erlang distributed service with $n$ phases and mean $\frac{1}{\mu}$. Then, \edit{when $n\mu \ne \beta - \alpha$, the vector of mean number in each phase of service is given by}
\edit{
\begin{align}
&
%
%
%
%
%
%
% MEAN
\E{Q_t}
=
\frac{\lambda_\infty}{n\mu} \big(I - e^{n\mu(N- I) t}\big){\mathbf{v}}
-
\left( \lambda_0 - \lambda_\infty \right)\left(n\mu N - (n\mu - \beta + \alpha)I\right)^{-1}
\big(e^{-(\beta - \alpha) t} I - e^{n\mu(N- I) t}\big) {\mathbf{v}}_1,
\end{align}
and when $n\mu = \beta - \alpha$, this vector is
\begin{align}
\E{Q_t}
=
\frac{\lambda_\infty}{n\mu}\hspace{-.1cm} \big(I - e^{n\mu(N- I) t}\big){\mathbf{v}}
+
\left( \lambda_0 - \lambda_\infty \right)e^{n\mu(N-I)t}x(t),
\end{align}
}where $\lambda_\infty = \frac{\beta \lambda^*}{\beta - \alpha}$ \edit{ and $x:\mathbb{R}^+ \to \mathbb{R}^n$ is such that $x_i(t) = \frac{(-n\mu)^{i-1}t^i}{i!}$}. Further, \edit{when $n\mu \ne \beta - \alpha$ the vector of covariances between the number in each phase of service and the intensity is
\begin{align}
&
%
%
%
%
%
% COVARIANCE VECTOR
\Cov{\lambda_t, Q_t}
=
\lambda_\infty\left(\alpha + \frac{\alpha^2}{2(\beta - \alpha)}\right)\left((n\mu + \beta - \alpha)I - n\mu N\right)^{-1}\left(I - e^{(n\mu N - (n\mu + \beta - \alpha)I)t}\right) {\mathbf{v}}_1
\nonumber
\\
&
\quad
+
\frac{\alpha\beta(\lambda_0 - \lambda_\infty)}{n\mu(\beta - \alpha)}
 \left(e^{-(\beta - \alpha)t}I - e^{(n\mu N - (n\mu + \beta - \alpha)I)t}\right)  {\mathbf{v}}
+
\frac{\alpha^2 (2\lambda_0 - \lambda_\infty)}{2(\beta - \alpha)}
\left(n\mu N - (n\mu - \beta + \alpha)I\right)^{-1}
\nonumber
\\
&
\quad
\cdot  \left(e^{-2(\beta - \alpha)t}I - e^{(n\mu N - (n\mu + \beta - \alpha)I)t}\right)  {\mathbf{v}}_1 ,
\end{align}
and when $n\mu = \beta - \alpha$, this is
\begin{align}
&
\Cov{\lambda_t, Q_t}
=
\lambda_\infty\left(\frac{\alpha}{n\mu} + \frac{\alpha^2}{2(n\mu)^2}\right)\left(2I - N\right)^{-1}\left(I - e^{n\mu (N - 2I)t}\right) {\mathbf{v}}_1
+
(\lambda_0 - \lambda_\infty)
\left(\frac{\alpha}{n\mu} + \frac{\alpha^2}{(n\mu)^2}\right)
\nonumber
\\
&
\quad
\cdot
 \left(e^{-n\mu t}I - e^{n\mu (N - 2I)t}\right)  {\mathbf{v}}
-
\frac{\alpha^2 (2\lambda_0 - \lambda_\infty)}{2n\mu}
 e^{n\mu(N - 2I)t}x(t).
\end{align}
}Finally, \edit{ when $ n\mu \ne \beta - \alpha$, the matrix of the covariance between the number in the phases of service is given by
\begin{align}
&
%
%
%
%
%
% COVARIANCE MATRIX
\Cov{Q_t, Q_t}
=
\frac{\alpha(2\beta - \alpha)\lambda_\infty}{2(\beta - \alpha)}
\left((n\mu + \beta - \alpha)I - n\mu N\right)^{-1}
\Bigg(
2(\beta - \alpha)e^{n\mu(N-I) t} M_{2n\mu,\onevec{1}, n\mu N^\T}(t)e^{n\mu(N^\T -I) t}
\nonumber
\\
&
\quad
+
\onevec{1} \onevec{1}^\T
-
e^{n\mu(N-I) t}
\onevec{1} \onevec{1}^\T
e^{n\mu(N^\T-I) t}
+
e^{n\mu(N-I) t}
\onevec{1} \onevec{1}^\T
\hspace{-.1cm}
\left(e^{-(\beta - \alpha) t}I - e^{n\mu(N^\T-I) t} \right)
\hspace{-.1cm}
(n\mu N^\T - (n\mu - \beta + \alpha)I)^{-1}
\nonumber
\\
&
\quad
\cdot
((n\mu + \beta - \alpha)I - n\mu N^\T)
+
((n\mu + \beta - \alpha)I - n\mu N)(n\mu N - (n\mu - \beta + \alpha)I)^{-1}
\left(e^{-(\beta - \alpha) t}I - e^{n\mu (N - I) t}\right)
\nonumber
\\
&
\quad
\cdot
 \onevec{1} \onevec{1}^\T e^{n\mu (N^\T - I) t}
\Bigg)
\left((n\mu + \beta - \alpha)I - n\mu N^\T \right)^{-1}
+
\frac{\alpha\beta(\lambda_0 - \lambda_\infty)}{(n\mu)^2(\beta - \alpha)}
\left( N - I\right)^{-1}
\Bigg(
(\beta - \alpha)e^{n\mu(N-I) t}
\nonumber
\\
&
\quad
\cdot
M_{2n\mu-\beta + \alpha,\onevec{1}, n\mu N^\T}(t)e^{n\mu(N^\T - I) t}
+
e^{-(\beta - \alpha) t}\onevec{1} \onevec{1}^\T
-
e^{n\mu(N-I) t}
\onevec{1} \onevec{1}^\T
e^{n\mu (N^\T - I) t}
-
n\mu
e^{n\mu(N-I) t}
\onevec{1} \onevec{1}^\T
\nonumber
\\
&
\quad
\cdot
 \left(e^{-(\beta - \alpha) t}I - e^{n\mu (N^\T - I) t} \right)
(n\mu N^\T - (n\mu - \beta + \alpha)I)^{-1}(N^\T-I)
-
n\mu(N-I)(n\mu N - (n\mu - \beta + \alpha)I)^{-1}
\nonumber
\\
&
\quad
\cdot
\Big(e^{-(\beta - \alpha) t}I - e^{n\mu(N-I) t}\Big)
\onevec{1} \onevec{1}^\T  e^{n\mu(N^\T-I) t}
\Bigg)
(N^\T - I)^{-1}
-
\frac{\alpha^2(2\lambda_0 - \lambda_\infty)}{2(\beta - \alpha)}(n\mu N - (n\mu - \beta + \alpha)I)^{-1}
\nonumber
\\
&
\quad
\cdot
\Bigg(
e^{-2(\beta - \alpha) t}\onevec{1} \onevec{1}^\T
-
e^{n\mu(N-I) t}
\onevec{1} \onevec{1}^\T
e^{n\mu (N^\T - I) t}
-
e^{n\mu(N-I) t}
\onevec{1} \onevec{1}^\T  \left(e^{-(\beta - \alpha) t}I - e^{n\mu(N^\T-I) t}\right)
\nonumber
\\
&
\quad
-
 \left(e^{-(\beta - \alpha) t}I - e^{n\mu(N-I) t}\right)\onevec{1} \onevec{1}^\T e^{n\mu(N^\T-I) t}
\Bigg)
(n\mu N^\T - (n\mu - \beta + \alpha)I)^{-1}
+
\frac{\lambda_\infty}{n\mu}
\mathrm{diag}\Big(
  \left(I - e^{n\mu(N-I) t}\right)\onevec{}
\Big)
  \nonumber
 \\
 &
 \quad
-
 ( \lambda_0 - \lambda_\infty )
\diag{
\left(n\mu N - (n\mu - \beta + \alpha)I\right)^{-1}\left(e^{-(\beta - \alpha) t} I - e^{n\mu(N-I) t}\right)\onevec{1}
},
\end{align}
whereas when $n\mu = \beta - \alpha$, this matrix is
\begin{align}
&\Cov{Q_t, Q_t}
=
\diag{
\frac{\lambda_\infty}{n\mu}\hspace{-.1cm} \big(I - e^{n\mu(N- I) t}\big){\mathbf{v}}
+
\left( \lambda_0 - \lambda_\infty \right)e^{n\mu(N-I)t}x(t)
}
+
e^{n\mu (N - I)t}
\Bigg(
\lambda_\infty \left(\frac{\alpha}{n\mu}+\frac{\alpha^2}{2(n\mu)^2}\right)
\nonumber
\\
&
\quad
\cdot
\bigg(
\left(M_{2n\mu,\onevec{1},n\mu N^\T}(t) - x(t)\onevec{1}^\T\right)\left(2I - N^\T\right)^{-1}
+
\left(2I - N\right)^{-1} \left(M_{2n\mu,\onevec{1},n\mu N^\T}(t) - \onevec{1}x^\T(t)\right)
\bigg)
+
(\lambda_0 - \lambda_\infty)
\nonumber
\\
&
\quad
\cdot
\left(\frac{\alpha}{n\mu} + \frac{\alpha^2}{(n\mu)^2}\right)
\bigg(
M_{2n\mu,\onevec{1},n\mu N^\T}(t)\left(I - N^\T\right)^{-1} + \left(I - N\right)^{-1} M_{2n\mu,\onevec{1},n\mu N^\T}(t)
-
x(t)\onevec{}^\T - \onevec{}x^\T (t)
\bigg)
\nonumber
\\
&
\quad
-
\frac{\alpha^2(2\lambda_0-\lambda_\infty)}{2n\mu}
\left(X(t) + X^\T (t)\right)
\Bigg)
e^{n\mu (N^\T - I)t},
\end{align}
}
where  all $t \geq 0$ and $X:\mathbb{R}^+ \to \mathbb{R}^{n\times n}$ is such that $X_{i,j}(t) = \frac{(-n\mu)^{i+j-2}t^{i+j-1}}{(i-1)!j!(i+j)}$.
\end{corollary}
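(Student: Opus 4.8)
The plan is to split the argument according to whether the matrix $S + (\beta - \alpha)I$ is invertible, which for Erlang service amounts to whether $n\mu \neq \beta - \alpha$. For Erlang service with $n$ phases and mean $\frac{1}{\mu}$ we have $S^\T = n\mu(N - I)$, where $N$ is the nilpotent lower-shift matrix, and $\theta = \onevec{1}$. Since $N^k\onevec{1} = \onevec{1+k}$ for $0 \le k \le n-1$ and $N^n = 0$, we have $(I - N)^{-1} = \sum_{k=0}^{n-1} N^k$, so that $(-S^\T)^{-1}\onevec{1} = \frac{1}{n\mu}\onevec{}$. Moreover $S^\T + (\beta - \alpha)I = n\mu N - (n\mu - \beta + \alpha)I$ has all eigenvalues equal to $-(n\mu - \beta + \alpha)$, hence it (and its transpose) is invertible if and only if $n\mu \neq \beta - \alpha$.

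First I would dispatch the case $n\mu \neq \beta - \alpha$. Here the hypothesis of Theorem~\ref{hawkqueuegeneral} holds, so the three stated expressions follow by substituting $S^\T = n\mu(N - I)$ and $\theta = \onevec{1}$ into the corresponding formulas of that theorem and simplifying. The simplifications are routine: $(-S^\T)^{-1}(I - e^{S^\T t})\theta = \frac{1}{n\mu}(I - e^{n\mu(N-I)t})\onevec{}$ because $(-S^\T)^{-1}$ commutes with $e^{S^\T t}$ and sends $\onevec{1}$ to $\frac{1}{n\mu}\onevec{}$; the shifted matrices become $(\beta - \alpha)I - S^\T = (n\mu + \beta - \alpha)I - n\mu N$ and $S^\T + (\beta - \alpha)I = n\mu N - (n\mu - \beta + \alpha)I$; the coefficient $\alpha + \frac{\alpha^2}{2(\beta - \alpha)}$ collapses to $\frac{\alpha(2\beta - \alpha)}{2(\beta - \alpha)}$; and the matrices $M_{0,\theta,S}(t)$ and $M_{-(\beta - \alpha),\theta,S}(t)$ in the covariance block turn into $M_{2n\mu,\onevec{1},n\mu N^\T}(t)$ and $M_{2n\mu - \beta + \alpha,\onevec{1},n\mu N^\T}(t)$ by the $M$-matrix identity recorded immediately before the statement. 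Collecting terms yields the displayed $n\mu \neq \beta - \alpha$ formulas.

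The case $n\mu = \beta - \alpha$ is the substantive one, because Theorem~\ref{hawkqueuegeneral} (indeed, the invertibility hypothesis of Lemma~\ref{mainlemma}) no longer applies. The plan is to return to the matrix-form linear ODE system of Corollary~\ref{hawkqueuede} and solve it directly via integrating factors, using the closed forms of $\E{\lambda_t}$ and $\Var{\lambda_t}$ from Proposition~\ref{hawksolveprop}. For the mean, $\E{Q_t} = e^{S^\T t}\int_0^t e^{-S^\T s}\theta\,\E{\lambda_s}\,\mathrm{d}s$; the constant part of $\E{\lambda_s}$ integrates against the still-invertible $-S^\T$ to give $\frac{\lambda_\infty}{n\mu}(I - e^{n\mu(N-I)t})\onevec{}$, while the $e^{-(\beta - \alpha)s}$ part gives $\int_0^t e^{(-S^\T - (\beta - \alpha)I)s}\onevec{1}\,\mathrm{d}s = \int_0^t e^{-n\mu N s}\onevec{1}\,\mathrm{d}s$, a purely polynomial integral (since $N$ is nilpotent) whose term-by-term evaluation via $N^k\onevec{1} = \onevec{1+k}$ produces precisely the vector $x(t)$. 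The same mechanism handles $\Cov{\lambda_t, Q_t}$: the homogeneous factor is now $e^{n\mu(N - 2I)t}$, and after the scalar exponentials cancel the $e^{-2(\beta - \alpha)t}$ forcing term again leaves $\int_0^t e^{-n\mu N s}\onevec{1}\,\mathrm{d}s = x(t)$. Finally, for the covariance matrix I would write $\Cov{Q_t,Q_t} = \diag{\E{Q_t}} + e^{S^\T t}\big(\int_0^t e^{-S^\T s}(\theta\Cov{\lambda_s,Q_s}^\T + \Cov{\lambda_s,Q_s}\theta^\T)e^{-S s}\,\mathrm{d}s\big)e^{S t}$, exactly as in the proof of Theorem~\ref{hawkqueuegeneral}, substitute the $\Cov{\lambda_s,Q_s}$ just found, and note that in every resulting summand the factors $e^{n\mu N^\T s}$ and $e^{-n\mu N^\T s}$ multiply to the identity and the scalar exponential prefactors cancel, so the surviving $s$-dependence is always a product of polynomial vectors; integrating these gives the matrix $X(t)$ (and its transpose) together with the $M_{2n\mu,\onevec{1},n\mu N^\T}(t)$ terms already present.

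The step I expect to be the main obstacle is the bookkeeping for the degenerate covariance-matrix case: one must keep track of the several forcing contributions, verify in each that the scalar exponentials cancel to leave a pure polynomial in $s$, and confirm that the accumulated polynomial integrals reassemble into the compact $x(t)$ and $X(t)$ expressions --- which are, in effect, the finite limits of the $(n\mu N - (n\mu - \beta + \alpha)I)^{-1}$-type factors that become singular as $n\mu \to \beta - \alpha$. Everything else reduces to linear algebra with nilpotent matrices and elementary integration.
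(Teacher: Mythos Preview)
Your proposal is correct and follows essentially the same approach as the paper. In fact the paper gives no proof at all for this corollary: the remark preceding it simply states that when the invertibility hypothesis of Theorem~\ref{hawkqueuegeneral} fails one can ``make use of the structure of invertibility that is implied by a specific phase-type distribution'' and that the expressions ``can be found through solving systems of differential equations provided by Corollary~\ref{hawkqueuede}.'' Your plan---substitute $S^\T = n\mu(N-I)$, $\theta=\onevec{1}$ into Theorem~\ref{hawkqueuegeneral} for the nondegenerate case, and for $n\mu=\beta-\alpha$ return to the matrix ODEs of Corollary~\ref{hawkqueuede} and integrate directly, exploiting the nilpotency of $N$ so that the singular integrals become finite polynomial sums yielding $x(t)$ and $X(t)$---is exactly this, spelled out in considerably more detail than the paper itself provides.
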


As with the Erlang, we also provide explicit formulas for the hyper-exponential distribution. In this case we have that $S = -D$ where $D$ is a diagonal matrix of the rates of service in each phase. This allows it to commute with the symmetric $\theta \theta^\T$, giving us
$$
M_{\gamma, \theta, -D}(t)
=
\int_0^t
e^{(\gamma I + D)s}
\theta\theta^\T  e^{D s}
\,\mathrm{d}s
=
\int_0^t
e^{(\gamma I + 2D)s}
\,\mathrm{d}s
\theta\theta^\T
=
(\gamma I + 2D)^{-1}
\left(
e^{(\gamma I + 2D)t}
-
I
\right)
\theta\theta^\T
$$
as long as $\gamma I + 2D$ is invertible. \edit{However, we also seek to address the case where $(\beta - \alpha)I + S = (\beta - \alpha)I - D$ is not invertible. In the hyper-exponential service setting, $(\beta - \alpha)I - D$ being singular implies that some $\mu_i = \beta - \alpha$, but it is not clear which or for how many $\mu_i$ this is the case. So, we instead use the element-level equations in Corollary~\ref{hawkqueuede} to solve for the explicit expressions. This method is preferable to the linear algebra approach for hyper-exponential service since in this setting $\mu_{ij} = 0$ for every $i$ and $j$.}

\begin{corollary}\label{hawkqueuehyper}
Consider a queueing system with arrivals occurring in accordance to a Hawkes process $\left(\lambda_t, N_t\right)$ with dynamics given in Equation~\ref{hawkdyn} \edit{with $\alpha < \beta$} and hyper-exponential distributed service with $n$ phases \edit{and} distinct service rate\edit{s  $\mu_1, \dots, \mu_n$}. Then, \edit{the mean number in phase $i \in \{1, \dots , n\}$ of service is
\begin{align}
&
%
%
%
%
%
%
% MEAN
\E{Q_{t,i}}
=
\begin{cases}
\frac{\lambda_\infty}{\mu_i}\left(1 - e^{-\mu_i t}\right)\theta_i
+
\frac{\lambda_0 - \lambda_\infty }{\mu_i - \beta + \alpha}\left(e^{-(\beta - \alpha) t}  - e^{- \mu_i t}\right)\theta_i
&
\text{if $\mu_i \ne \beta - \alpha$,}
\\
\frac{\lambda_\infty}{\mu_i}\left(1 - e^{-\mu_i t}\right)\theta_i
+
\left(\lambda_0 - \lambda_\infty \right)\theta_i t e^{-\mu_i t}
&
\text{if $\mu_i = \beta - \alpha$,}
\end{cases}
\end{align}
where} $\lambda_\infty = \frac{\beta \lambda^*}{\beta - \alpha}$. \minrev{Furthermore} \edit{the covariance \minrev{between} the number in phase $i$ of service and the intensity is
\begin{align}
&
%
%
%
%
%
% COVARIANCE VECTOR
\Cov{\lambda_t, Q_{t,i}}
=
\begin{cases}
\frac{\alpha \theta_i (2\beta - \alpha)\lambda_\infty}{2(\beta - \alpha)(\mu_i + \beta - \alpha)}\left(1 - e^{-(\mu_i + \beta - \alpha)t}\right)
+
\frac{\alpha\beta\theta_i(\lambda_0 - \lambda_\infty)}{\mu_i(\beta - \alpha)} \big(e^{-(\beta - \alpha)t}
&
\\
\quad
- e^{-(\mu_i + \beta - \alpha)t}\big)
-
\frac{\alpha^2 \theta_i (2\lambda_0 - \lambda_\infty)}{2(\beta - \alpha)(\mu_i - \beta + \alpha)}
 \left(e^{-2(\beta - \alpha)t} - e^{-(\mu_i + \beta - \alpha)t}\right)
 &
 \text{if $\mu_i \ne \beta - \alpha$,}
 \\
 \frac{\alpha \theta_i (2\mu_i + \alpha)\lambda_\infty}{4\mu_i^2}\left(1 - e^{-2\mu_i t}\right)
+
\frac{\alpha\beta\theta_i(\lambda_0 - \lambda_\infty)}{\mu_i^2} \big(e^{-\mu_i t}
- e^{-2\mu_i t}\big)
&
\\
\quad
-
\frac{\alpha^2 \theta_i (2\lambda_0 - \lambda_\infty)}{2\mu_i}
te^{-2\mu_i t}
 &
 \text{if $\mu_i = \beta - \alpha$.}
\end{cases}
\end{align}
Then,} \edit{ the covariance between the number in phase $i$ of service and the number in phase $j$ of service where $i, j \in \{1, \dots ,\}$ and $i \ne j$ is
\begin{align}
\Cov{Q_{t,i},Q_{t,j}}
&
=
\begin{cases}
\frac{\alpha\theta_i\theta_j(2\beta-\alpha)\lambda_\infty}{2(\beta-\alpha)(\mu_j+\beta-\alpha)}
\left(
\frac{1-e^{-(\mu_i+\mu_j)t}}{\mu_i+\mu_j}
-
\frac{e^{-(\mu_j+\beta-\alpha)t}-e^{-(\mu_i+\mu_j)t}}{\mu_i-\beta+\alpha}
\right)
&
\\
\quad
+
\frac{\alpha\beta\theta_i\theta_j(\lambda_0-\lambda_\infty)}{\mu_j(\beta-\alpha)}
\left(
\frac{e^{-(\beta-\alpha)t} - e^{-(\mu_i+\mu_j)t}}{\mu_i+\mu_j-\beta+\alpha}
-
\frac{e^{-(\mu_j+\beta-\alpha)t}-e^{-(\mu_i+\mu_j)t}}{\mu_i-\beta+\alpha}
\right)
&
\\
\quad
-
\frac{\alpha^2\theta_i \theta_j(2\lambda_0-\lambda_\infty)}{2(\beta-\alpha)(\mu_j-\beta+\alpha)}
\left(
\frac{e^{2(\beta-\alpha)t}-e^{-(\mu_i+\mu_j)t}}{\mu_i+\mu_j-2\beta+2\alpha}
-
\frac{e^{-(\mu_j+\beta-\alpha)t}-e^{-(\mu_i+\mu_j)t}}{\mu_i-\beta+\alpha}
\right)
&
\\
\quad
+
\frac{\alpha\theta_i\theta_j(2\beta-\alpha)\lambda_\infty}{2(\beta-\alpha)(\mu_i+\beta-\alpha)}
\left(
\frac{1-e^{-(\mu_i+\mu_j)t}}{\mu_i+\mu_j}
-
\frac{e^{-(\mu_i+\beta-\alpha)t}-e^{-(\mu_i+\mu_j)t}}{\mu_j-\beta+\alpha}
\right)
&
\\
\quad
+
\frac{\alpha\beta\theta_i\theta_j(\lambda_0-\lambda_\infty)}{\mu_i(\beta-\alpha)}
\left(
\frac{e^{-(\beta-\alpha)t} - e^{-(\mu_i+\mu_j)t}}{\mu_i+\mu_j-\beta+\alpha}
-
\frac{e^{-(\mu_i+\beta-\alpha)t}-e^{-(\mu_i+\mu_j)t}}{\mu_j-\beta+\alpha}
\right)
&
\\
\quad
-
\frac{\alpha^2\theta_i \theta_j(2\lambda_0-\lambda_\infty)}{2(\beta-\alpha)(\mu_i-\beta+\alpha)}
\left(
\frac{e^{2(\beta-\alpha)t}-e^{-(\mu_i+\mu_j)t}}{\mu_i+\mu_j-2\beta+2\alpha}
-
\frac{e^{-(\mu_i+\beta-\alpha)t}-e^{-(\mu_i+\mu_j)t}}{\mu_j-\beta+\alpha}
\right)
&
\text{if $\mu_i \ne \beta - \alpha \ne \mu_j$,}
\\
\frac{\alpha\theta_i\theta_j(2\beta-\alpha)\lambda_\infty}{4\mu_j^2}
\left(
\frac{1-e^{-(\mu_i+\mu_j)t}}{\mu_i+\mu_j}
-
\frac{e^{-2\mu_j t}-e^{-(\mu_i+\mu_j)t}}{\mu_i-\mu_j}
\right)
+
\frac{\alpha\beta\theta_i\theta_j(\lambda_0-\lambda_\infty)}{\mu_j^2}
&
\\
\quad
\cdot
\left(
\frac{e^{-\mu_j t} - e^{-(\mu_i+\mu_j)t}}{\mu_i}
-
\frac{e^{-2\mu_j t}-e^{-(\mu_i+\mu_j)t}}{\mu_i-\mu_j}
\right)
-
\frac{\alpha^2\theta_i \theta_j(2\lambda_0-\lambda_\infty)}{2\mu_j}
&
\\
\quad
\cdot
\left(
\frac{te^{-2\mu_i t}}{\mu_j-\mu_i}
+
\frac{e^{-(\mu_i+\mu_j)t} - e^{-2 \mu_i t}}{(\mu_j - \mu_i)^2}
\right)
+
\frac{\alpha\theta_i\theta_j(2\beta-\alpha)\lambda_\infty}{2\mu_j(\mu_i+\mu_j)}
\bigg(
\frac{1-e^{-(\mu_i+\mu_j)t}}{\mu_i+\mu_j}
&
\\
\quad
-
te^{-(\mu_i+\mu_j)t}
\bigg)
+
\frac{\alpha\beta\theta_i\theta_j(\lambda_0-\lambda_\infty)}{\mu_i \mu_j}
\bigg(
\frac{e^{-\mu_j t} - e^{-(\mu_i+\mu_j)t}}{\mu_i}
-
te^{-(\mu_i+\mu_j)t}
\bigg)
&
\\
\quad
-
\frac{\alpha^2\theta_i \theta_j(2\lambda_0-\lambda_\infty)}{2\mu_j(\mu_i-\mu_j)}
\left(
\frac{e^{2\mu_j t}-e^{-(\mu_i+\mu_j)t}}{\mu_i-\mu_j}
-
te^{-(\mu_i+\mu_j)t}
\right)
&
\text{if $\mu_i \ne \beta - \alpha = \mu_j$,}
%\\
%%
%%
%%
%%
%\frac{\alpha\theta_i\theta_j(2\beta - \alpha)\lambda_\infty}{2(\beta - \alpha)}
%\left(
%\frac{2(\beta - \alpha) + \mu_i + \mu_j}{(\mu_i + \mu_j)(\mu_i + \beta - \alpha)(\mu_j + \beta - \alpha)}
%\right)
%\left(
%1 - e^{-(\mu_i + \mu_j) t}
%\right)
%-
%\frac{\alpha \theta_i \theta_j}{\beta - \alpha}
%&
%\\
%\quad
%\cdot
%\left(
%\frac{(2\beta - \alpha)\lambda_\infty}{2(\mu_i + \beta - \alpha)}
%+
%\frac{\lambda_0 - \lambda_\infty}{\mu_i}
%-
%\frac{\alpha(2\lambda_0 - \lambda_\infty)}{2(\mu_i - \beta + \alpha)}
%\right)
%t
%e^{-(\mu_i + \mu_j)t}
%-
%\frac{\alpha \theta_i \theta_j}{\beta - \alpha}
%&
%\\
%\quad
%\cdot
%\left(
%\frac{(2\beta - \alpha)\lambda_\infty}{2(\mu_j + \beta - \alpha)}
%+
%\frac{\lambda_0 - \lambda_\infty}{\mu_i}
%\right)
%te^{-(\mu_i + \mu_j)t}
%+
%\frac{\alpha\beta\theta_i\theta_j(\mu_i + \mu_j)}{\mu_i^2\mu_j^2}
%&
%\\
%\quad
%\cdot
%(\lambda_0 - \lambda_\infty)
%\big(
%e^{-\mu_j t}
%-
%e^{-(\mu_i + \mu_j)t}
%\big)
%-
%\frac{\alpha^2\theta_i\theta_i(2\lambda_0 - \lambda_\infty)}{2\mu_i}
%t^2e^{-(\mu_i + \mu_j)t}
%&
%\text{if $\mu_i = \beta - \alpha = \mu_j$.}
\end{cases}
\end{align}
Finally,}
\edit{ the variance of the number in phase $i \in \{1, \dots, n \}$ of service is given by
\begin{align}
\Var{Q_{t,i}}
=
\begin{cases}
\frac{\lambda_\infty \theta_i}{\mu_i}\left(1-e^{-\mu_i t}\right)
+
\frac{\alpha\theta_i^2(2\beta-\alpha)\lambda_\infty}{2\mu_i(\beta-\alpha)(\mu_i+\beta-\alpha)}\left(1 - e^{-2\mu_i t}\right)
-
\Big(
\frac{\alpha\theta_i^2(2\beta-\alpha)\lambda_\infty}{(\beta-\alpha)(\mu_i+\beta-\alpha)}
&
\\
\quad
+
\frac{2\alpha\beta\theta_i^2(\lambda_0 - \lambda_\infty)}{\mu_i(\beta-\alpha)}
-
\frac{\alpha^2\theta_i^2(2\lambda_0-\lambda_\infty)}{(\beta-\alpha)(\mu_i-\beta+\alpha)}
\Big)
\frac{e^{-(\mu_i+\beta-\alpha)t}-e^{-2\mu_i t}}{\mu_i-\beta+\alpha}
+
\Big(
(\lambda_0-\lambda_\infty)\theta_i
&
\\
\quad
+ \frac{\mu_i(\lambda_0-\lambda_\infty)\theta_i}{\mu_i-\beta+\alpha} + \frac{2\alpha\beta\theta_i^2(\lambda_0-\lambda_\infty)}{\mu_i(\beta-\alpha)}
\Big)
\frac{e^{-(\beta-\alpha)t}-e^{-2\mu_i t}}{2\mu_i - \beta + \alpha}
-
\frac{\alpha^2\theta_i^2(2\lambda_0-\lambda_\infty)}{2(\beta-\alpha)(\mu_i-\beta+\alpha)^2}
&
\\
\quad
\cdot
\left(e^{-2(\beta-\alpha)t} - e^{-2\mu_i t}\right)
-
\frac{(\lambda_0-\lambda_\infty)\theta_i}{\mu_i-\beta+\alpha}\left(e^{-\mu_i t} - e^{-2\mu_i t}\right)
&
\text{if $\mu_i \ne \beta - \alpha \ne 2\mu_i$,}
\\
\frac{\lambda_\infty \theta_i}{\mu_i}\left(1-e^{-\mu_i t}\right)
+
\frac{\alpha\theta_i^2(2\beta-\alpha)\lambda_\infty}{2\mu_i(\beta-\alpha)(\mu_i+\beta-\alpha)}\left(1 - e^{-2\mu_i t}\right)
-
\Big(
\frac{\alpha\theta_i^2(2\beta-\alpha)\lambda_\infty}{(\beta-\alpha)(\mu_i+\beta-\alpha)}
&
\\
\quad
+
\frac{2\alpha\beta\theta_i^2(\lambda_0 - \lambda_\infty)}{\mu_i(\beta-\alpha)}
-
\frac{\alpha^2\theta_i^2(2\lambda_0-\lambda_\infty)}{(\beta-\alpha)(\mu_i-\beta+\alpha)}
\Big)
\frac{e^{-(\mu_i+\beta-\alpha)t}-e^{-2\mu_i t}}{\mu_i-\beta+\alpha}
+
\Big(
(\lambda_0-\lambda_\infty)\theta_i
&
\\
\quad
+
\frac{\mu_i(\lambda_0-\lambda_\infty)\theta_i}{\mu_i-\beta+\alpha} + \frac{2\alpha\beta\theta_i^2(\lambda_0-\lambda_\infty)}{\mu_i(\beta-\alpha)}
\Big)
te^{-2\mu_i t}
-
\frac{\alpha^2\theta_i^2(2\lambda_0-\lambda_\infty)}{2(\beta-\alpha)(\mu_i-\beta+\alpha)^2}
\big(e^{-2(\beta-\alpha)t}
&
\\
\quad
-
e^{-2\mu_i t}\big)
-
\frac{(\lambda_0-\lambda_\infty)\theta_i}{\mu_i-\beta+\alpha}\left(e^{-\mu_i t} - e^{-2\mu_i t}\right)
&
\text{if $2\mu_i = \beta - \alpha$,}
\\
\frac{\lambda_\infty \theta_i}{\mu_i}\left(1-e^{-\mu_i t}\right)
+
\frac{\alpha\theta_i^2(2\beta-\alpha)\lambda_\infty}{4\mu_i^3}\left(1 - e^{-2\mu_i t}\right)
-
\Big(
\frac{\alpha\theta_i^2(2\beta-\alpha)\lambda_\infty}{2\mu_i^2}
&
\\
\quad
+
\frac{2\alpha\beta\theta_i^2(\lambda_0 - \lambda_\infty)}{\mu_i^2}
\Big)
te^{-2\mu_i t}
+
\Big(
(\lambda_0-\lambda_\infty)\theta_i
+
\frac{2\alpha\beta\theta_i^2(\lambda_0-\lambda_\infty)}{\mu_i^2}
\Big)
&
\\
\quad
\cdot
\frac{e^{-\mu_i t}-e^{-2\mu_i t}}{\mu_i }
-
\frac{\alpha^2\theta_i^2(2\lambda_0-\lambda_\infty)}{2\mu_i }
t^2 e^{-2\mu_i t}
+
(\lambda_0-\lambda_\infty)\theta_i
\bigg(
\frac{t e^{-\mu_i t}}{\mu_i}
&
\\
\quad
+
\frac{e^{-2\mu_i t} - e^{-\mu_i t}}{\mu_i^2}
\bigg)
&
\text{if $\mu_i = \beta - \alpha$,}
\end{cases}
\end{align}
}
where all $t \geq 0$.
\end{corollary}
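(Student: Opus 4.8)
The plan is to exploit the fact that hyper-exponential service has no inter-phase transfers, so $\mu_{ij} = 0$ for all $i \ne j$ and $\mu_i = \mu_{i0}$. Substituting this into the differential equations of Corollary~\ref{hawkqueuede} collapses every summation over transfer rates and leaves a family of \emph{scalar} linear ordinary differential equations, the lone exception being $\Cov{Q_{t,i},Q_{t,j}}$, whose forcing couples two phases but which is still driven only by previously solved quantities. Concretely, $\frac{\mathrm{d}}{\mathrm{d}t}\E{Q_{t,i}} = \theta_i \E{\lambda_t} - \mu_i \E{Q_{t,i}}$, $\frac{\mathrm{d}}{\mathrm{d}t}\Cov{\lambda_t, Q_{t,i}} = (\alpha - \beta - \mu_i)\Cov{\lambda_t, Q_{t,i}} + \alpha\theta_i\E{\lambda_t} + \theta_i\Var{\lambda_t}$, $\frac{\mathrm{d}}{\mathrm{d}t}\Cov{Q_{t,i},Q_{t,j}} = \theta_i\Cov{\lambda_t, Q_{t,j}} + \theta_j\Cov{\lambda_t, Q_{t,i}} - (\mu_i+\mu_j)\Cov{Q_{t,i},Q_{t,j}}$, and $\frac{\mathrm{d}}{\mathrm{d}t}\Var{Q_{t,i}} = \theta_i\E{\lambda_t} + 2\theta_i\Cov{\lambda_t, Q_{t,i}} + \mu_i\E{Q_{t,i}} - 2\mu_i\Var{Q_{t,i}}$. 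Each is of the form $\dot y = -ay + \sum_k c_k t^{j_k} e^{-b_k t}$ with $y(0) = 0$, solved by the integrating factor $e^{at}$; this is exactly the scalar version of Lemma~\ref{mainlemma}, which is why the text opts to work at the element level here.

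The execution proceeds in dependency order. First I would solve for $\E{Q_{t,i}}$, substituting $\E{\lambda_t} = \lambda_\infty + (\lambda_0 - \lambda_\infty)e^{-(\beta-\alpha)t}$ from Proposition~\ref{hawksolveprop}; multiplying by $e^{\mu_i t}$ and integrating produces the term with denominator $\mu_i - \beta + \alpha$ when $\mu_i \ne \beta - \alpha$ and a resonant $t e^{-\mu_i t}$ term when $\mu_i = \beta - \alpha$, giving the two stated cases. Next I would solve for $\Cov{\lambda_t, Q_{t,i}}$, substituting both $\E{\lambda_t}$ and $\Var{\lambda_t}$ from Proposition~\ref{hawksolveprop}; the integrating factor is $e^{(\mu_i + \beta - \alpha)t}$, the forcing carries the exponentials $1$, $e^{-(\beta-\alpha)t}$, $e^{-2(\beta-\alpha)t}$, and the only possible resonance (since $\mu_i > 0$) is again $\mu_i = \beta - \alpha$, arising from integrating $e^{(\mu_i - \beta + \alpha)t}$, which yields the two cases. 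With these in hand, $\Cov{Q_{t,i}, Q_{t,j}}$ follows from the integrating factor $e^{(\mu_i+\mu_j)t}$ applied to $\theta_i\Cov{\lambda_t, Q_{t,j}} + \theta_j\Cov{\lambda_t, Q_{t,i}}$, and $\Var{Q_{t,i}}$ follows from $e^{2\mu_i t}$ applied to $\theta_i\E{\lambda_t} + 2\theta_i\Cov{\lambda_t, Q_{t,i}} + \mu_i\E{Q_{t,i}}$; in each instance I would simply read off the exponents appearing in the (already piecewise) forcing and check them against the integrating factor for coincidences. In the variance equation two distinct resonances can occur, namely $\mu_i = \beta - \alpha$ (from the $e^{-(\mu_i+\beta-\alpha)t}$ and $e^{-2(\beta-\alpha)t}$ pieces of $\Cov{\lambda_t, Q_{t,i}}$) and $2\mu_i = \beta - \alpha$ (from the bare $e^{-(\beta-\alpha)t}$ piece), which is precisely why $\Var{Q_{t,i}}$ is split into three cases; the phase-phase covariance splits analogously according to which of $\mu_i$, $\mu_j$, and their combinations with $\beta-\alpha$ coincide, producing the listed cases.

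The underlying integrals are all elementary, so the real difficulty --- and the main obstacle --- is bookkeeping: each later quantity is forced by an expression that is itself piecewise-defined, so one must carefully track how the resonance cases of $\E{Q_{t,i}}$ and $\Cov{\lambda_t, Q_{t,i}}$ propagate into $\Var{Q_{t,i}}$ and $\Cov{Q_{t,i},Q_{t,j}}$, and then algebraically consolidate the resulting proliferation of exponential terms into the compact forms stated. I note that the alternative of specializing Theorem~\ref{hawkqueuegeneral} is not available here, because that theorem requires $S + (\beta-\alpha)I = -(D - (\beta-\alpha)I)$ to be invertible, and in the hyper-exponential setting this fails precisely when some $\mu_i = \beta - \alpha$ --- with no a priori control over which or how many $\mu_i$ are affected --- so the element-level approach above is both the natural route and the one that best exploits the vanishing of $\mu_{ij}$.
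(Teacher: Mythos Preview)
Your proposal is correct and follows essentially the same approach as the paper: specialize the element-level ODEs of Corollary~\ref{hawkqueuede} using $\mu_{ij}=0$ for $i\ne j$, then solve the resulting scalar linear ODEs in dependency order via integrating factors, splitting into cases at each resonance. The paper explicitly adopts this element-level route for the same reason you identify, namely that Theorem~\ref{hawkqueuegeneral} requires $S+(\beta-\alpha)I$ invertible, which fails in the hyper-exponential setting precisely when some $\mu_i=\beta-\alpha$.
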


\edit{We now note that in both Corollary~\ref{hawkqueueerlang} and Corollary~\ref{hawkqueuehyper}, taking $n=1$ reduces the setting to exponential service. We demonstrate the simplification and use of the singe-phase expressions in finding the auto-covariance of the $Hawkes/M/\infty$ queue, shown in Proposition~\ref{hawkesminfautoco}. We also note that these findings} compare quite nicely \edit{to simulations} in numerical demonstrations. In Subsection~\ref{subsec_sim}, we provide several example figures of these equations and their simulated counterparts.\\

\edit{Now that we have investigated the transient behavior of the $Hawkes/PH/\infty$ queue for a variety of settings it is natural to consider the behavior of the system in steady-state. This, along with the behavior of the system with an unstable arrival process, is the focus of the next subsection.}

\subsection{Limiting Behavior of the $Hawkes/PH/\infty$ Queue}\label{sslimits}

\edit{In many situations, the steady-state behavior of a queueing system may be of particular interest. With that in mind, we now investigate the mean and variance of the $Hawkes/PH/\infty$ queue as time goes to infinity.}

\begin{corollary}\label{hawkqueuelimit}
Consider a queueing system with arrivals occurring in accordance to a Hawkes process $\left(\lambda_t, N_t\right)$ with dynamics given in Equation~\ref{hawkdyn} and phase-type distributed service. Let $S \in \mathbb{R}^{n \times n}$ be the sub-generator matrix for the transient states in the phase-distribution CTMC and let $\theta \in [0,1]^n$ be the initial distribution for arrivals to these states. \edit{Then, the steady-state mean number in each phase of service is given by the vector}
\begin{align}
&
%
%
%
%
%
%
% MEAN
\mathcal{Q}_\infty
\equiv
\lim_{t \to \infty}
\E{Q_t}
=
\lambda_\infty\hspace{-.1cm} \left(- S^\mathrm{T}\right)^{-1}\theta
\end{align}
where $\lambda_\infty = \frac{\beta \lambda^*}{\beta - \alpha}$. Further, \edit{the vector of steady-state covariances between the number in each phase of service and the intensity is}
\vspace{-.25cm}
\begin{align}
&
%
%
%
%
%
% COVARIANCE VECTOR
\mathcal{C}_\infty
\equiv
\lim_{t \to \infty}
\Cov{\lambda_t, Q_t}
=
\lambda_\infty\frac{\alpha(2\beta - \alpha)}{2(\beta - \alpha)}
\left((\beta - \alpha)I - S^\mathrm{T}\right)^{-1}\theta
\, .
\end{align}
Finally, \edit{the matrix of steady-state covariances between each phase of service $\mathrm{lim}_{t\to\infty} \Cov{Q_t, Q_t}$, denoted $\mathcal{V}_\infty$, is given by the solution to the Lyapunov equation
\begin{align}
S^\T \mathcal{V}_\infty + \mathcal{V}_\infty S + \mathcal{M} = 0
\end{align}
where $\mathcal{M} = \theta \mathcal{C}_\infty^\T + \mathcal{C}_\infty \theta^\T - S^\T \diag{\mathcal{Q}_\infty}  - \diag{\mathcal{Q}_\infty} S$. If $S$ is symmetric, then $\mathcal{V}_\infty  =-\frac{1}{2}S^{-1}\mathcal{M}$.}
%\begin{align}
%=
%\int_0^\infty
%e^{S^\T t}\mathcal{M} e^{S t} \mathrm{d}t
%=
%\int_0^\infty
%e^{2S t}\mathcal{M}  \mathrm{d}t
%\end{align}
\end{corollary}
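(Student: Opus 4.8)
The plan is to obtain all three limits by passing $t \to \infty$ in the linear-algebraic ODEs of Corollary~\ref{hawkqueuede} (equivalently, in the explicit solutions of Theorem~\ref{hawkqueuegeneral}), relying on the fact that every one of these systems has a stable homogeneous part. The relevant coefficient matrices are $S^\T$ and $S^\T - (\beta-\alpha)I$, both Hurwitz: $S$ is the sub-generator of the transient states of a CTMC in which absorption is certain (so $-S$ is a nonsingular M-matrix, and $S$ is invertible by the diagonal-dominance argument used in the proof of Theorem~\ref{hawkqueuegeneral}), and $\alpha < \beta$; moreover the forcing terms $\E{\lambda_t}$ and $\Var{\lambda_t}$ converge by Corollary~\ref{hawklimitcor}. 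Hence $\E{Q_t}$, $\Cov{\lambda_t,Q_t}$ and $\Cov{Q_t,Q_t}$ each converge to finite limits, and at those limits the time derivatives vanish, turning the ODEs into algebraic equations. (Alternatively one may send $t\to\infty$ directly in Theorem~\ref{hawkqueuegeneral}, where every term carries a decaying factor $e^{S^\T t}$ or $e^{-(\beta-\alpha)t}$, and the sandwiched integrals $e^{S^\T t}M_{\gamma,\theta,S}(t)e^{St}$ become, after the substitution $u=t-s$, finite integrals over $[0,\infty)$ of exponentially decaying integrands.)

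With convergence in hand, setting $\frac{\mathrm{d}}{\mathrm{d}t}\E{Q_t}=0$ in $\frac{\mathrm{d}}{\mathrm{d}t}\E{Q_t}=\theta\E{\lambda_t}+S^\T\E{Q_t}$ and using $\E{\lambda_t}\to\lambda_\infty$ gives $S^\T\mathcal{Q}_\infty=-\lambda_\infty\theta$, so $\mathcal{Q}_\infty=\lambda_\infty(-S^\T)^{-1}\theta$. Likewise $\frac{\mathrm{d}}{\mathrm{d}t}\Cov{\lambda_t,Q_t}=0$ together with $\E{\lambda_t}\to\lambda_\infty$ and $\Var{\lambda_t}\to\frac{\alpha^2\lambda_\infty}{2(\beta-\alpha)}$ yields $\big((\beta-\alpha)I-S^\T\big)\mathcal{C}_\infty=\lambda_\infty\big(\alpha+\frac{\alpha^2}{2(\beta-\alpha)}\big)\theta$; since $\beta-\alpha>0$ is not an eigenvalue of the Hurwitz matrix $S^\T$, the matrix $(\beta-\alpha)I-S^\T$ is invertible, and simplifying the scalar to $\frac{\alpha(2\beta-\alpha)}{2(\beta-\alpha)}$ gives the stated $\mathcal{C}_\infty$. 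For the covariance matrix I would pass to the limit in the matrix ODE; the crucial point is that the diagonal forcing term $\diag{\theta\E{\lambda_t}+S^\T\E{Q_t}}$ vanishes in the limit precisely because $\theta\lambda_\infty+S^\T\mathcal{Q}_\infty=0$, which leaves $0=S^\T\mathcal{V}_\infty+\mathcal{V}_\infty S+\mathcal{M}$ with $\mathcal{M}=\theta\mathcal{C}_\infty^\T+\mathcal{C}_\infty\theta^\T-S^\T\diag{\mathcal{Q}_\infty}-\diag{\mathcal{Q}_\infty}S$. That this Lyapunov equation pins $\mathcal{V}_\infty$ down uniquely follows from $S^\T$ and $-S$ having disjoint spectra (one in the open left, the other in the open right half-plane), the standard nonsingularity criterion for Sylvester/Lyapunov equations. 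Finally, when $S=S^\T$, substituting the ansatz $\mathcal{V}_\infty=-\frac{1}{2}S^{-1}\mathcal{M}$ into $S\mathcal{V}_\infty+\mathcal{V}_\infty S=-\mathcal{M}$ and simplifying confirms the closed form.

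I expect the only genuine obstacle to be the passage from the ODEs to the algebraic/Lyapunov equations: one must justify $\frac{\mathrm{d}}{\mathrm{d}t}\Cov{Q_t,Q_t}\to 0$ (not merely boundedness of $\Cov{Q_t,Q_t}$), together with the uniqueness statement for the Lyapunov equation. Both reduce to the Hurwitz property of $S$ and the stability condition $\alpha<\beta$, after which everything else is routine bookkeeping with the already-derived ODEs and the Hawkes limits of Corollary~\ref{hawklimitcor}.
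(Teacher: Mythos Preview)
Your approach is essentially the paper's: the authors' proof is a single sentence stating that one may either take $t\to\infty$ in Theorem~\ref{hawkqueuegeneral} or set the ODEs of Corollary~\ref{hawkqueuede} to zero and solve, and you carry out the latter with considerably more care about convergence and uniqueness than the paper supplies. One small caveat: your verification of the symmetric case is not complete as stated, since substituting $\mathcal{V}_\infty=-\tfrac{1}{2}S^{-1}\mathcal{M}$ into $S\mathcal{V}_\infty+\mathcal{V}_\infty S=-\mathcal{M}$ yields $-\tfrac{1}{2}\mathcal{M}-\tfrac{1}{2}S^{-1}\mathcal{M}S$, which equals $-\mathcal{M}$ only if $S\mathcal{M}=\mathcal{M}S$; the paper does not justify this either, so you are no worse off, but ``simplifying confirms'' overstates what the substitution alone gives.
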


\begin{proof}
The proof follows by either taking the limit of the equations in Theorem~\ref{hawkqueuegeneral} or setting the corresponding differential equations to 0 and finding the equilibrium solution.
\end{proof}

\edit{\begin{remark}\label{hawkqueuelimitremark}
We note that in steady-state the invertibility conditions from Theorem~\ref{hawkqueuegeneral} are no longer necessary. We can further observe that these equations reveal an interesting relationship among these steady-state values for the case of single phase service. For $\mu$ as the rate of exponential service, Corollary~\ref{hawkqueuelimit} yields
\begin{equation}\label{hminfsteady}
\mathcal{V}_\infty = \mathcal{Q}_\infty + \frac{1}{\mu}\mathcal{C}_\infty  = \frac{\lambda_\infty}{\mu}\left(1 + \frac{\alpha(2\beta - \alpha)}{2(\beta - \alpha)(\mu + \beta - \alpha)}\right) .
\end{equation}
Thus, we have that the steady-state variance of the number in system for the $Hawkes/M/\infty$ queue is \minrev{equal to} the mean number in system plus the expected service duration times the steady-state covariance between the number in system and the intensity. Thus this provides an explicit contrast with Poisson-driven queues, as the steady-state distribution of a $M/M/\infty$ system is known to be Poisson distributed with rate equal to the steady-state mean number in system. This implies that the steady-state variance for such a queue is equal to its steady-state mean, unlike the relationship we observe for the $Hawkes/M/\infty$ system in Equation~\ref{hminfsteady}.\\ \end{remark}}

\edit{However, as we have noted, if $\alpha \geq \beta$ the Hawkes process is unstable and so steady-state analysis of the queue will not apply. Thus, in this scenario we instead investigate the transient behavior of the mean of the queue under the unstable arrival process. }

\edit{
\begin{corollary}
Consider a queueing system with arrivals occurring in accordance to a Hawkes process $\left(\lambda_t, N_t\right)$ with dynamics given in Equation~\ref{hawkdyn} with $\alpha \geq \beta$ and phase-type distributed service. Let $S \in \mathbb{R}^{n \times n}$ be the sub-generator matrix for the transient states in the phase-distribution CTMC and let $\theta \in [0,1]^n$ be the initial distribution for arrivals to these states. Then the vector of mean number in service in each phase of service is given by
\begin{align}
\E{Q_t}
&=
\left((\alpha - \beta)I - S^\T\right)^{-1}\left(e^{(\alpha - \beta)t} I - e^{S^\T t}\right)\theta \left(\frac{\beta\lambda^*}{\alpha - \beta} + \lambda_0\right)
\nonumber
\\&\quad
+
(S^\T)^{-1}\left(I - e^{S^\T t}\right)\theta \frac{\beta \lambda^*}{\alpha - \beta}
\intertext{when $\alpha > \beta$ and
}
\E{Q_t}
&=
-(S^\T)^{-1}\left(I - e^{S^\T t}\right)\theta(\lambda_0 -\beta\lambda^*) - (S^\T)^{-1}\theta\beta\lambda^* t
\end{align}
when $\alpha = \beta$.
\end{corollary}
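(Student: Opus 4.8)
The plan is to reuse the linear ordinary differential equation for $\E{Q_t}$ already obtained in Corollary~\ref{hawkqueuede}, whose vector form $\frac{\mathrm{d}}{\mathrm{d}t}\E{Q_t} = S^\T\E{Q_t} + \theta\E{\lambda_t}$ did not use the stability condition anywhere in its derivation (it rests only on the generator of the queue, Fubini, and the fundamental theorem of calculus), so it remains valid when $\alpha \geq \beta$. The only new ingredient in the unstable regime is the forcing term $\theta\E{\lambda_t}$: when $\alpha > \beta$ we substitute $\E{\lambda_t} = \left(\frac{\beta\lambda^*}{\alpha-\beta} + \lambda_0\right)e^{(\alpha-\beta)t} - \frac{\beta\lambda^*}{\alpha-\beta}$, and when $\alpha = \beta$ we substitute $\E{\lambda_t} = \beta\lambda^* t + \lambda_0$, both being the transient mean intensities established earlier for the unstable Hawkes process. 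In either case the resulting ODE has exactly the form handled by Lemma~\ref{mainlemma}, with $-L = S^\T$ and initial condition $g_0 = \E{Q_0} = 0$ since the queue starts empty.

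For $\alpha > \beta$ I would present the forcing as a sum of two terms $\nu_i t^{\eta_i} e^{\gamma_i t}$ with $(\nu_1,\eta_1,\gamma_1) = (\theta(\frac{\beta\lambda^*}{\alpha-\beta}+\lambda_0),\,0,\,\alpha-\beta)$ and $(\nu_2,\eta_2,\gamma_2) = (-\theta\frac{\beta\lambda^*}{\alpha-\beta},\,0,\,0)$, then invoke Lemma~\ref{mainlemma} after checking its hypotheses: $S$, hence $S^\T = -L$, is invertible because it is diagonally dominant by construction, $L + \gamma_1 I = (\alpha-\beta)I - S^\T$ is invertible by the standing assumption, and $L + \gamma_2 I = -S^\T$ is invertible. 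Because both $\eta_i = 0$, the inner sum in Lemma~\ref{mainlemma} collapses to a single term and each summand reduces to $(L+\gamma_i I)^{-1}(e^{\gamma_i t}I - e^{-Lt})\nu_i$; rewriting $e^{-Lt} = e^{S^\T t}$ and adding the two contributions (the homogeneous term $e^{-Lt}g_0$ vanishing) yields the claimed expression. For $\alpha = \beta$ the forcing contains a genuine degree-one factor, so I would take $(\nu_1,\eta_1,\gamma_1) = (\theta\beta\lambda^*,\,1,\,0)$ and $(\nu_2,\eta_2,\gamma_2) = (\theta\lambda_0,\,0,\,0)$, for which only invertibility of $S^\T$ is required. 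Applying Lemma~\ref{mainlemma}, the $\eta_1 = 1$ term produces the linear-in-$t$ piece $-(S^\T)^{-1}\theta\beta\lambda^* t$ along with the lower-order constant and exponential terms, and collecting these with the $\nu_2$ contribution gives the stated formula.

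I do not expect a real obstacle: once the unstable transient mean intensity is inserted, each case is a direct specialization of Lemma~\ref{mainlemma}, and what remains is algebraic bookkeeping. The points that warrant a little care are verifying the invertibility hypotheses of Lemma~\ref{mainlemma} --- in particular noting that $S$ is automatically invertible by diagonal dominance, so that no assumption beyond invertibility of $(\alpha-\beta)I - S^\T$ is needed in the $\alpha > \beta$ case --- and, in the $\alpha = \beta$ case, retaining every term generated by the degree-one polynomial forcing rather than only the leading linear one.
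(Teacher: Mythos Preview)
Your plan is exactly the paper's (implicit) argument: this corollary is stated without proof, and the intended derivation is precisely the one used for Theorem~\ref{hawkqueuegeneral} --- write the vector ODE $\frac{\mathrm{d}}{\mathrm{d}t}\E{Q_t}=S^\T\E{Q_t}+\theta\E{\lambda_t}$ from Corollary~\ref{hawkqueuede}, substitute the unstable transient mean intensity, and invoke Lemma~\ref{mainlemma}. Your identification of the forcing terms, the required invertibility checks, and the role of diagonal dominance of $S$ are all correct.

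One caution that your last sentence already anticipates: when you actually carry out Lemma~\ref{mainlemma} in the $\alpha=\beta$ case with $(\nu_1,\eta_1,\gamma_1)=(\theta\beta\lambda^*,1,0)$, the $k=1$ contribution yields $-(S^\T)^{-2}(I-e^{S^\T t})\theta\beta\lambda^*$, not $+(S^\T)^{-1}(I-e^{S^\T t})\theta\beta\lambda^*$ as the printed formula would imply. Differentiating the displayed $\alpha=\beta$ expression and comparing with $S^\T\E{Q_t}+\theta(\beta\lambda^* t+\lambda_0)$ shows they agree only if $(S^\T)^{-1}\theta=-\theta$, so this is a typo in the statement rather than a defect in your method; just be aware that faithfully executing your plan will produce the corrected expression.
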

}

\subsection{Auto-covariance of the $Hawkes/PH/\infty$ Queue}\label{ssautoco}

\edit{We now consider the auto-covariance of the number in this queueing system, $Q_t \in \mathbb{R}^n$. Analogous to the auto-covariance for the number of arrivals from the Hawkes process discussed in Subsection~\ref{hawkesDinf}, this matrix quantity is defined as
$$
\Cov{Q_t, Q_{t-\tau}}
=
\E{Q_t Q_{t-\tau}^\T} - \E{Q_t}\E{Q_{t-\tau}}^\T
$$
where $t \geq \tau \geq 0$ and otherwise the covariance is equal to 0. For an infinite server queue with Hawkes process arrivals and phase-type distributed service, the findings in Subsection~\ref{ssmeandyn} \minrev{give} us expressions for $\E{Q_t}$ and $\E{Q_{t-\tau}}$. \minrev{Let $\mathcal{F}_s$ be the filtration of the queueing system, the Hawkes process, and the intensity at time $s \geq 0$. Then, assuming $S + (\beta - \alpha)I$ is invertible, conditional expectation yields}
\begin{align*}
\E{Q_t Q_{t-\tau}^\T}
&=
\E{\E{Q_t \mid F_{t-\tau}} Q_{t-\tau}^\T}
\\
&
=
\mathrm{E}\Big[
\Big(
\lambda_\infty \left(- S^\mathrm{T}\right)^{-1}\big(I - e^{S^\mathrm{T} \tau}\big)\theta
-
\left( \lambda_{t-\tau} - \lambda_\infty \right)\left(S^\mathrm{T} + (\beta - \alpha)I\right)^{-1}\big(e^{-(\beta - \alpha) \tau} I
\\
&
\quad
- e^{S^\mathrm{T} \tau}\big)\theta
+ e^{S^\T \tau}Q_{t - \tau}
\Big)
Q_{t-\tau}^\T
\Big]
\\
&
=
\lambda_\infty \left(- S^\mathrm{T}\right)^{-1}\big(I - e^{S^\mathrm{T} \tau}\big)\theta
\E{Q_{t-\tau}}^\T
-
\left(S^\mathrm{T} + (\beta - \alpha)I\right)^{-1}\big(e^{-(\beta - \alpha) \tau} I - e^{S^\mathrm{T} \tau}\big)\theta
\\
&
\quad
\cdot
\left(
\E{\lambda_{t-\tau} Q_{t-\tau}^\T}
-
\lambda_\infty \E{Q_{t-\tau}}^\T
\right)
+ e^{S^\T \tau}\E{Q_{t-\tau}Q_{t-\tau}^\T}
\end{align*}
by application of the expression for the vector of the mean number in each phase given in  Theorem~\ref{hawkqueuegeneral}, modified to start at time $t - \tau$. Upon recognizing that $\E{\lambda_{t-\tau} Q_{t-\tau}^\T} = \Cov{\lambda_{t-\tau},Q_{t-\tau}} + \E{\lambda_{t-\tau}} \E{Q_{t-\tau}}^\T$ and $\E{Q_{t-\tau}Q_{t-\tau}^\T} = \Cov{Q_{t-\tau}, Q_{t-\tau}^\T} + \E{Q_{t-\tau}}\E{Q_{t-\tau}}^\T$, we have that
\begin{align}\label{autocorphaseeq}
&\Cov{Q_{t}, Q_{t - \tau}}
%&
%=
%\lambda_\infty\left(-S^\T \right)^{-1}\left(I - e^{S^\T \tau}\right)\theta\E{Q_{t-\tau}}^\T
%-
%\left(S^\T + (\beta - \alpha)I\right)^{-1} \left(e^{-(\beta - \alpha)\tau} I - e^{S^\T \tau} \right)
%\\
%&
%\quad
%\cdot
%\theta \E{\lambda_{t - \tau} Q_{t-\tau}}^\T
%+
%\lambda_\infty \left(S^\T + (\beta - \alpha)I\right)^{-1} \left(e^{-(\beta - \alpha)\tau} I - e^{S^\T \tau} \right) \theta \E{ Q_{t-\tau}}^\T
%\\
%&
%\quad
%+
%\E{Q_{t - \tau}Q_{t-\tau}^\T}
%-
%\E{Q_t}\E{Q_{t-\tau}}^\T
%%%%%%%%%%%%%%%%%%%%%%%%%%%%%%%%%%%%%%
=
\lambda_\infty\left(-S^\T \right)^{-1}\left(I - e^{S^\T \tau}\right)\theta\E{Q_{t-\tau}}^\T
-
\left(S^\T + (\beta - \alpha)I\right)^{-1} \left(e^{-(\beta - \alpha)\tau} I - e^{S^\T \tau} \right)
\nonumber
\\
&
\qquad
\cdot
\theta \left(\Cov{\lambda_{t - \tau}, Q_{t-\tau}}^\T + \E{\lambda_{t-\tau}}\E{Q_{t-\tau}}^\T - \lambda_\infty \E{Q_{t-\tau}}^\T\right)
+
e^{S^\T \tau}
\Cov{Q_{t - \tau},Q_{t-\tau}}
\nonumber
\\
&
\qquad
+
\left(
e^{S^\T \tau}
\E{Q_{t-\tau}}
-
\E{Q_t}
\right)
\E{Q_{t-\tau}}^\T
\end{align}
and that each term in this expression can be calculated by applying Theorem~\ref{hawkqueuegeneral}. An explicit expression for the transient auto-covariance using this approach is given in the Appendix. In this section we give an explicit expression for the auto-covariance of the $Hawkes/M/\infty$ queue. In this setting with service rate $\mu$, the same approach as above yields
\begin{align}\label{autocorexpeq1}
\Cov{Q_t, Q_{t - \tau}}
&=
\frac{\lambda_\infty}{\mu}\left(1 - e^{-\mu \tau}\right)\E{Q_{t-\tau}}
+
e^{-\mu \tau}
\Var{Q_{t-\tau}}
+
\Cov{\lambda_{t-\tau}, Q_{t-\tau}}\frac{e^{-(\beta - \alpha)\tau} - e^{-\mu \tau}}{\mu - \beta + \alpha}
\nonumber
\\&\quad
+
 \left(\E{\lambda_{t-\tau}} - \lambda_\infty\right)\E{Q_{t-\tau}}
\frac{e^{-(\beta - \alpha)\tau} - e^{-\mu \tau}}{\mu - \beta + \alpha}
+
e^{-\mu \tau}
\E{Q_{t-\tau}}^2
-
\E{Q_t}\E{Q_{t-\tau}}
\end{align}
when $\mu \ne \beta - \alpha$ and
\begin{align}\label{autocorexpeq2}
\Cov{Q_t, Q_{t - \tau}}
&=
\frac{\lambda_\infty}{\mu}\left(1 - e^{-\mu \tau}\right)\E{Q_{t-\tau}}
+
e^{-\mu \tau}
\Var{Q_{t-\tau}}
+
\Cov{\lambda_{t-\tau}, Q_{t-\tau}}\tau e^{-\mu \tau}
\nonumber
\\&\quad
+
 \left(\E{\lambda_{t-\tau}} - \lambda_\infty\right)\E{Q_{t-\tau}}
\tau e^{-\mu \tau}
+
e^{-\mu \tau}
\E{Q_{t-\tau}}^2
-
\E{Q_t}\E{Q_{t-\tau}}
\end{align}
when $\mu = \beta - \alpha$, where each of these makes use of Corollary~\ref{hawkqueuehyper} with $n = 1$, $\theta_1 = 1$, and $\mu_i = \mu$. These expressions are made explicit in the following proposition.
}

\begin{proposition}\label{hawkesminfautoco}
Consider a queueing system with arrivals occurring in accordance to a Hawkes process $\left(\lambda_t, N_t\right)$ with dynamics given in Equation~\ref{hawkdyn} with $\alpha < \beta$ and exponentially distributed service with rate $\mu$. Then, for $t \geq \tau \geq 0$ the auto-covariance of the number in system is
\vspace{-.1in}
\begin{align}
&
\Cov{Q_t, Q_{t-\tau}}
=
\frac{\lambda_\infty}{\mu}\left(1 - e^{-\mu \tau}\right)
\left(
\frac{\lambda_\infty}{\mu}\left(1 - e^{-\mu (t-\tau)}\right)
+
\frac{\lambda_0 - \lambda_\infty }{\mu - \beta + \alpha}\left(e^{-(\beta - \alpha) (t-\tau)}  - e^{- \mu (t-\tau)}\right)
\right)
+
\frac{\lambda_\infty }{\mu}
\nonumber
\\
&
\quad
\cdot
\left(e^{-\mu \tau}-e^{-\mu t}\right)
+
\frac{\alpha(2\beta-\alpha)\lambda_\infty}{2\mu(\beta-\alpha)(\mu+\beta-\alpha)}\left(e^{-\mu \tau} - e^{-\mu (2 t - \tau)}\right)
-
\bigg(
\frac{\alpha(2\beta-\alpha)\lambda_\infty}{(\beta-\alpha)(\mu+\beta-\alpha)}
+
\frac{2\alpha\beta(\lambda_0 - \lambda_\infty)}{\mu(\beta-\alpha)}
\nonumber
\\
&
\quad
-
\frac{\alpha^2(2\lambda_0-\lambda_\infty)}{(\beta-\alpha)(\mu-\beta+\alpha)}
\bigg)
\frac{e^{-(\mu+\beta-\alpha)t + (\beta-\alpha)\tau}-e^{-\mu (2t - \tau)}}{\mu-\beta+\alpha}
+
\bigg(
\lambda_0-\lambda_\infty
+
\frac{\mu(\lambda_0-\lambda_\infty)}{\mu-\beta+\alpha} + \frac{2\alpha\beta(\lambda_0-\lambda_\infty)}{\mu(\beta-\alpha)}
\bigg)
\nonumber
\\
&
\quad
\cdot
h(t-\tau)e^{-\mu \tau}
-
\frac{\alpha^2(2\lambda_0-\lambda_\infty)}{2(\beta-\alpha)(\mu-\beta+\alpha)^2}
\big(e^{-2(\beta-\alpha)t - (\mu-2\beta+2\alpha)\tau} - e^{-\mu (2t - \tau)}\big)
-
\frac{\lambda_0-\lambda_\infty}{\mu-\beta+\alpha}\big(e^{-\mu t}
\nonumber
\\
&
\quad
 - e^{-\mu (2t - \tau)}\big)
+
e^{-\mu \tau}
\bigg(
\frac{\lambda_\infty}{\mu}\left(1 - e^{-\mu (t-\tau)}\right)
+
\frac{\lambda_0 - \lambda_\infty }{\mu - \beta + \alpha}
\left(e^{-(\beta - \alpha) (t-\tau)}  - e^{- \mu (t-\tau)}\right)
\bigg)^2
+
\frac{e^{-(\beta - \alpha)\tau} - e^{-\mu \tau}}{\mu - \beta + \alpha}
\nonumber
\\
&
\quad
\cdot
\Bigg(
\frac{\alpha  (2\beta - \alpha)\lambda_\infty}{2(\beta - \alpha)(\mu + \beta - \alpha)}
\left(1 - e^{-(\mu + \beta - \alpha)(t-\tau)}\right)
+
\frac{\alpha\beta(\lambda_0 - \lambda_\infty)}{\mu(\beta - \alpha)}
\big(
e^{-(\beta - \alpha)(t-\tau)}
-
e^{-(\mu + \beta - \alpha)(t-\tau)}
\big)
\nonumber
\\
&
\quad
-
\frac{\alpha^2  (2\lambda_0 - \lambda_\infty)}{2(\beta - \alpha)(\mu - \beta + \alpha)}
 \big(e^{-2(\beta - \alpha)(t-\tau)}
 - e^{-(\mu + \beta - \alpha)(t-\tau)}\big)
\Bigg)
+
(\lambda_0 - \lambda_\infty)
   \frac{e^{-(\beta - \alpha)\tau} - e^{-\mu \tau}}{\mu - \beta + \alpha}
\Bigg(
\frac{\lambda_0 - \lambda_\infty }{\mu - \beta + \alpha}
\nonumber
\\
&
\quad
\cdot
\left(e^{-2(\beta - \alpha)(t-\tau)}  - e^{-(\mu + \beta - \alpha)(t-\tau)}\right)
+
\frac{\lambda_\infty}{\mu}\left(e^{-(\beta - \alpha)(t-\tau)} - e^{-(\mu + \beta - \alpha)(t-\tau)}\right)
\Bigg)
-
\bigg(
\frac{\lambda_\infty}{\mu}\left(1 - e^{-\mu t}\right)
+
\frac{\lambda_0 - \lambda_\infty }{\mu - \beta + \alpha}
\nonumber
\\
&
\quad
\cdot
\big(e^{-(\beta - \alpha) t} - e^{- \mu t}
\big)
\bigg)
\bigg(
\frac{\lambda_\infty}{\mu}\left(1 - e^{-\mu (t-\tau)}\right)
+
\frac{\lambda_0 - \lambda_\infty }{\mu - \beta + \alpha}
\left(e^{-(\beta - \alpha) (t-\tau)}  - e^{- \mu (t-\tau)}
\right)
\bigg)
\end{align}
when $\mu \ne \beta - \alpha$ and
\begin{align}
&
\Cov{Q_t, Q_{t-\tau}}
=
\frac{\lambda_\infty}{\mu}\left(1 - e^{-\mu \tau}\right)
\left(
\frac{\lambda_\infty}{\mu}\left(1 - e^{-\mu (t-\tau)}\right)
+
\left(\lambda_0 - \lambda_\infty \right) (t-\tau) e^{-\mu (t-\tau)}
\right)
+
\frac{\lambda_\infty}{\mu}\left(e^{-\mu \tau}-e^{-\mu t}\right)
\nonumber
\\
&
\quad
+
\frac{\alpha(2\beta-\alpha)\lambda_\infty}{4\mu^3}\left(e^{-\mu \tau} - e^{-\mu (2t-\tau)}\right)
-
\Big(
\frac{\alpha(2\beta-\alpha)\lambda_\infty}{2\mu^2}
+
\frac{2\alpha\beta(\lambda_0 - \lambda_\infty)}{\mu^2}
\Big)
(t-\tau)e^{-\mu(2t- \tau)}
+
\Big(
\lambda_0-\lambda_\infty
\nonumber
\\
&
\quad
+
\frac{2\alpha\beta(\lambda_0-\lambda_\infty)}{\mu^2}
\Big)
\frac{e^{-(\beta-\alpha)t -(\mu-\beta+\alpha)\tau}-e^{-\mu(2 t-\tau)}}{\mu }
-
\frac{\alpha^2(2\lambda_0-\lambda_\infty)}{2\mu }
(t-\tau)^2 e^{-\mu (2t-\tau)}
+
(\lambda_0-\lambda_\infty)
\nonumber
\\
&
\quad
\cdot
\bigg(
\frac{(t-\tau) e^{-\mu t}}{\mu}
+
\frac{e^{-\mu (2t-\tau)} - e^{-\mu t}}{\mu^2}
\bigg)
+
e^{-\mu \tau}
\left(
\frac{\lambda_\infty}{\mu}\left(1 - e^{-\mu (t-\tau)}\right)
+
\left(\lambda_0 - \lambda_\infty \right) (t-\tau) e^{-\mu (t-\tau)}
\right)^2
\nonumber
\\
&
\quad
+
\bigg(
 \frac{\alpha (2\mu + \alpha)\lambda_\infty}{4\mu^2}\left(1 - e^{-2\mu (t-\tau)}\right)
+
\frac{\alpha\beta(\lambda_0 - \lambda_\infty)}{\mu^2} \big(e^{-\mu (t-\tau)}
- e^{-2\mu (t-\tau)}\big)
-
\frac{\alpha^2 (2\lambda_0 - \lambda_\infty)}{2\mu}
(t-\tau)e^{-2\mu (t - \tau)}
\bigg)
\nonumber
\\
&
\quad
\cdot
\tau e^{-\mu\tau}
+
  \tau
(\lambda_0 - \lambda_\infty)e^{- \mu t}
\Bigg(
\frac{\lambda_\infty}{\mu}\left(1 - e^{-\mu (t-\tau)}\right)
+
\left(\lambda_0 - \lambda_\infty \right) (t-\tau) e^{-\mu (t-\tau)}
\Bigg)
-
\bigg(
\frac{\lambda_\infty}{\mu}\left(1 - e^{-\mu t}\right)
\nonumber
\\
&
\quad
+
\left(\lambda_0 - \lambda_\infty \right) t e^{-\mu t}
\bigg)
\bigg(
\frac{\lambda_\infty}{\mu}\left(1 - e^{-\mu (t-\tau)}\right)
+
\left(\lambda_0 - \lambda_\infty \right) (t-\tau) e^{-\mu (t-\tau)}
\bigg)
\end{align}
when $\mu = \beta - \alpha$, where $h(s) = s e^{-2\mu s}$ if $2\mu = \beta - \alpha$ and $h(s) = \frac{e^{-(\beta - \alpha)s} - e^{-2\mu s}}{2\mu - \beta + \alpha}$ if $2\mu \ne \beta - \alpha$ for all $s \geq 0$.
\end{proposition}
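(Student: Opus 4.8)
The plan is to obtain the result directly from Equations~\ref{autocorexpeq1} and~\ref{autocorexpeq2}, which already express $\Cov{Q_t, Q_{t-\tau}}$ for the $Hawkes/M/\infty$ queue in terms of quantities whose closed forms are in hand. The right-hand sides of those two equations involve only $\E{Q_{t-\tau}}$, $\Var{Q_{t-\tau}}$, $\Cov{\lambda_{t-\tau}, Q_{t-\tau}}$, $\E{\lambda_{t-\tau}}$, and $\E{Q_t}$. The first four are precisely the single-phase specializations ($n=1$, $\theta_1 = 1$, $\mu_1 = \mu$) of the formulas in Corollary~\ref{hawkqueuehyper}, and the last, $\E{\lambda_{t-\tau}}$, is given by Proposition~\ref{hawksolveprop}. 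So the proof is a substitute-and-simplify argument, which is exactly what the derivation preceding the proposition statement sets up.

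First I would record the $n=1$ reductions of Corollary~\ref{hawkqueuehyper}: for $\mu \ne \beta - \alpha$, $\E{Q_s} = \frac{\lambda_\infty}{\mu}(1 - e^{-\mu s}) + \frac{\lambda_0 - \lambda_\infty}{\mu - \beta + \alpha}\left(e^{-(\beta - \alpha)s} - e^{-\mu s}\right)$, together with the matching single-phase expressions for $\Var{Q_s}$ and $\Cov{\lambda_s, Q_s}$ (and their $\mu = \beta - \alpha$ counterparts, which replace the $\frac{1}{\mu - \beta + \alpha}$ factors by explicit $s e^{-\mu s}$-type terms). I would also record $\E{\lambda_s} = \lambda_\infty + (\lambda_0 - \lambda_\infty)e^{-(\beta - \alpha)s}$ from Proposition~\ref{hawksolveprop}, so that the factor $\E{\lambda_{t-\tau}} - \lambda_\infty$ appearing in Equations~\ref{autocorexpeq1}--\ref{autocorexpeq2} simplifies to $(\lambda_0 - \lambda_\infty)e^{-(\beta - \alpha)(t-\tau)}$.

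Then I would substitute these into Equation~\ref{autocorexpeq1} (for $\mu \ne \beta - \alpha$) and Equation~\ref{autocorexpeq2} (for $\mu = \beta - \alpha$), evaluating each piece at argument $s = t - \tau$ or $s = t$ as indicated, and collect the resulting exponential terms. The outer case split in the proposition mirrors the one in Corollary~\ref{hawkqueuehyper}: the $\mu = \beta - \alpha$ branch produces the $\tau e^{-\mu\tau}$ and $(t-\tau)^2$-type terms, while the $\mu \ne \beta - \alpha$ branch produces the $\frac{e^{-(\beta - \alpha)\tau} - e^{-\mu\tau}}{\mu - \beta + \alpha}$-type terms. The finer sub-case — the piecewise definition of $h(s)$ according to whether $2\mu = \beta - \alpha$ — is simply inherited from the sub-case in the single-phase variance formula of Corollary~\ref{hawkqueuehyper}, whose denominator $2\mu_i - \beta + \alpha$ degenerates exactly when $2\mu = \beta - \alpha$; writing that contribution as $(\,\cdots\,)\,h(t-\tau)\,e^{-\mu\tau}$ packages both possibilities into one line.

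The main obstacle is purely bookkeeping rather than conceptual: there are many exponential modes present ($e^{-\mu\tau}$, $e^{-\mu t}$, $e^{-\mu(2t - \tau)}$, $e^{-(\beta - \alpha)\tau}$, $e^{-(\mu + \beta - \alpha)(t-\tau)}$, $e^{-2(\beta - \alpha)(t-\tau)}$, and the additional cross-products coming from squaring $\E{Q_{t-\tau}}$ in Equations~\ref{autocorexpeq1}--\ref{autocorexpeq2}), each carried by a rational function of $\mu$, $\beta - \alpha$, $\lambda_0$, and $\lambda_\infty$, and one must track carefully that every denominator that appears ($\mu - \beta + \alpha$, $\mu + \beta - \alpha$, $2\mu - \beta + \alpha$) is nonzero in the case being treated — which is guaranteed by the hypothesis $\alpha < \beta$ together with the explicit case on $\mu$ versus $\beta - \alpha$ and the piecewise definition of $h$. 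No new idea is required beyond this substitution; all the content lives in Equations~\ref{autocorexpeq1}--\ref{autocorexpeq2}, Corollary~\ref{hawkqueuehyper}, and Proposition~\ref{hawksolveprop}.
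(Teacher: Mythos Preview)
Your proposal is correct and matches the paper's own proof essentially line for line: the paper likewise specializes Corollary~\ref{hawkqueuehyper} to $n=1$ to obtain the closed forms of $\E{Q_s}$, $\Var{Q_s}$, and $\Cov{\lambda_s,Q_s}$, and then substitutes these (together with $\E{\lambda_s}$) into Equations~\ref{autocorexpeq1}--\ref{autocorexpeq2}. Your identification of the $h$ sub-case as inherited from the $2\mu_i - \beta + \alpha$ denominator in the single-phase variance is exactly right.
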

\begin{proof}
The stated forms follow by simplification of the expressions in Corollary~\ref{hawkqueuehyper}, yielding
\begin{align*}
\E{Q_t}
=
\frac{\lambda_\infty}{\mu}\left(1 - e^{-\mu t}\right)
+
\frac{\lambda_0 - \lambda_\infty }{\mu - \beta + \alpha}\left(e^{-(\beta - \alpha) t}  - e^{- \mu t}\right)
\end{align*}
for the mean of the $Hawkes/M/\infty$ queue,
\begin{align*}
\Cov{\lambda_t,Q_t}
&
=
\frac{\alpha  (2\beta - \alpha)\lambda_\infty}{2(\beta - \alpha)(\mu + \beta - \alpha)}\left(1 - e^{-(\mu + \beta - \alpha)t}\right)
+
\frac{\alpha\beta(\lambda_0 - \lambda_\infty)}{\mu(\beta - \alpha)} \big(e^{-(\beta - \alpha)t}
- e^{-(\mu + \beta - \alpha)t}\big)
\\
&
\quad
-
\frac{\alpha^2  (2\lambda_0 - \lambda_\infty)}{2(\beta - \alpha)(\mu - \beta + \alpha)}
 \left(e^{-2(\beta - \alpha)t} - e^{-(\mu + \beta - \alpha)t}\right)
\end{align*}
for the covariance between the queue and the intensity, and
\begin{align*}
&
\Var{Q_{t}}
=
\frac{\lambda_\infty }{\mu}\left(1-e^{-\mu t}\right)
+
\frac{\alpha(2\beta-\alpha)\lambda_\infty}{2\mu(\beta-\alpha)(\mu+\beta-\alpha)}\left(1 - e^{-2\mu t}\right)
-
\bigg(
\frac{\alpha(2\beta-\alpha)\lambda_\infty}{(\beta-\alpha)(\mu+\beta-\alpha)}
+
\frac{2\alpha\beta(\lambda_0 - \lambda_\infty)}{\mu(\beta-\alpha)}
\nonumber
\\
&
\quad
-
\frac{\alpha^2(2\lambda_0-\lambda_\infty)}{(\beta-\alpha)(\mu-\beta+\alpha)}
\bigg)
\frac{e^{-(\mu+\beta-\alpha)t}-e^{-2\mu t}}{\mu-\beta+\alpha}
+
\bigg(
\lambda_0-\lambda_\infty
+
\frac{\mu(\lambda_0-\lambda_\infty)}{\mu-\beta+\alpha}
 +
  \frac{2\alpha\beta(\lambda_0-\lambda_\infty)}{\mu(\beta-\alpha)}
\bigg)
\nonumber
\\
&
\quad
\cdot
\frac{e^{-(\beta-\alpha)t}-e^{-2\mu t}}{2\mu - \beta + \alpha}
-
\frac{\alpha^2(2\lambda_0-\lambda_\infty)}{2(\beta-\alpha)(\mu-\beta+\alpha)^2}
\left(e^{-2(\beta-\alpha)t} - e^{-2\mu t}\right)
-
\frac{\lambda_0-\lambda_\infty}{\mu-\beta+\alpha}\left(e^{-\mu t} - e^{-2\mu t}\right)
\end{align*}
for the variance of the queue, all in the case where $\mu \ne \beta - \alpha$. The remaining derivation follows directly from substitution of these functions and the corresponding expressions for remaining cases and epochs into Equations \ref{autocorexpeq1} and \ref{autocorexpeq2}.
\end{proof} 

\vspace{-.05in}
\begin{figure}[h]
	\begin{center}
\includegraphics[width=.45\textwidth]{./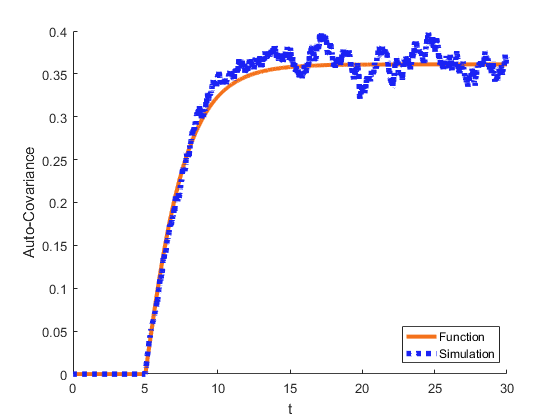}\includegraphics[width=.45\textwidth]{./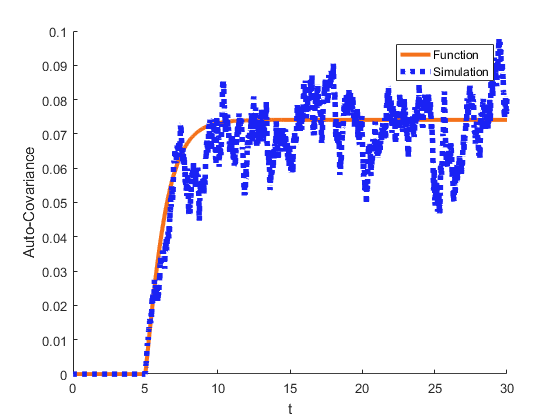}
\caption{Auto-covariance of the ${Hawkes / M / \infty}$ Queue for ${\tau = 5}$, where ${\alpha = \frac 3 4}$, ${\beta = \frac 5 4}$, ${\lambda^* = \mu = 1}$ (left) and  ${\alpha = 1}$, ${\beta = 2}$, ${\lambda^* = \mu = 1}$ (right).} \label{queueautoco}
\end{center}
\end{figure}

In Figure~\ref{queueautoco} the expressions in Proposition~\ref{hawkesminfautoco} are compared to simulations, based on 100,000 replications.

\subsection{Generating Functions for the $Hawkes/PH/\infty$ Queue}\label{ssgenfunc}

To complement these findings, we also derive a form for the moment generating function for a general queueing system driven by a Hawkes process.

\begin{theorem}\label{MGF}
Consider a queueing system with arrivals occurring in accordance to a Hawkes process $\left(\lambda_t, N_t\right)$ with dynamics given in Equation~\ref{hawkdyn} \edit{ with $\alpha < \beta$} and phase-type distributed service. Let $\delta \in \mathbb{R}^{n+1}_+$ and let $M(\delta , t) = M(\delta_0 , \dots , \delta_n, t) = \E{e^{\delta_0 \lambda_t + \sum_{i=1}^n \delta_i Q_{t,i}}}$. Then, the moment generating function for the queueing system $M(\delta, t)$ is given by the solution to the following partial differential equation,
\begin{align}
\frac{\partial M(\delta, t)}{\partial t}  &= \delta_0 \beta \lambda^* M(\delta, t) + \left(\sum_{i=1}^n \theta_i(e^{\delta_0 \alpha + \delta_i} - 1) - \delta_0\beta \right)\frac{\partial M(\delta, t)}{\partial \delta_0}
\\&+ \sum_{i=1}^n\left(\mu_{i0}(e^{-\delta_i}-1) + \sum_{k \ne i} \mu_{ik}(e^{\delta_k-\delta_i}-1)\right) \frac{\partial M(\delta, t)}{\partial \delta_i}  \, .\nonumber
\end{align}
\end{theorem}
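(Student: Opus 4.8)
The plan is to apply the infinitesimal generator $\mathcal{L}$ from Lemma~\ref{fubinifundQ} to the exponential test function $f(x) = e^{\delta_0 \lambda + \sum_{i=1}^n \delta_i q_i}$, where $x = (\lambda, N, q_1, \dots, q_n)$, and then to invoke Lemma~\ref{fubinifundQ} to turn $\E{\mathcal{L}f(X_t)}$ into $\frac{\partial}{\partial t} M(\delta, t)$. The observation that makes this tractable is that $f$ does not depend on $N$ and is exponential in $\lambda$ and the $q_i$, so every term produced by $\mathcal{L}$ is a constant multiple of $f(x)$, $\lambda f(x)$, or $q_i f(x)$. Concretely, $\frac{\partial f}{\partial \lambda} = \delta_0 f(x)$, the arrival shift gives $f(\lambda + \alpha, N+1, q_i + 1) = e^{\delta_0 \alpha + \delta_i} f(x)$, a transfer gives $f(q_i - 1, q_j + 1) = e^{\delta_j - \delta_i} f(x)$, and a departure gives $f(q_i - 1) = e^{-\delta_i} f(x)$.

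Substituting these identities into the generator displayed before Lemma~\ref{fubinifundQ} and collecting terms yields
\[
\mathcal{L}f(x) = \delta_0 \beta \lambda^* f(x) - \delta_0 \beta \lambda f(x) + \sum_{i=1}^n \theta_i\bigl(e^{\delta_0 \alpha + \delta_i} - 1\bigr)\lambda f(x) + \sum_{i=1}^n\left(\mu_{i0}(e^{-\delta_i}-1) + \sum_{k \ne i}\mu_{ik}(e^{\delta_k - \delta_i}-1)\right) q_i f(x).
\]
Taking expectations and using $\E{\lambda_t f(X_t)} = \frac{\partial M}{\partial \delta_0}(\delta, t)$ and $\E{Q_{t,i} f(X_t)} = \frac{\partial M}{\partial \delta_i}(\delta, t)$ — obtained by differentiating under the expectation in the definition $M(\delta,t) = \E{e^{\delta_0 \lambda_t + \sum_i \delta_i Q_{t,i}}}$ — produces exactly the right-hand side of the claimed equation, while Lemma~\ref{fubinifundQ} identifies $\E{\mathcal{L}f(X_t)}$ with $\frac{\partial M}{\partial t}$. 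After that, the argument is finished.

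The only step requiring genuine care is the analytic justification: applying Lemma~\ref{fubinifundQ} (equivalently Dynkin's formula) to the unbounded function $f$, and interchanging $\delta$-differentiation with expectation. This is where the stability assumption $\alpha < \beta$ is used — it guarantees $M(\delta, t)$ is finite, and the needed derivatives exist, for $\delta$ in a neighborhood of the origin, and it controls the growth of $\lambda_t$ and $Q_t$ enough for the localization/stopping-time argument described in the remark following Lemma~\ref{fubinifundQ} (cf. \citet{oelschlager1984martingale}) to go through. Once these points are granted, the remainder is the purely algebraic substitution above, which I would not belabor.
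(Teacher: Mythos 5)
Your proposal is correct and follows essentially the same route as the paper's own proof: apply the infinitesimal generator to the exponential test function, exploit that shifts of the arguments become multiplicative factors, take expectations, and recognize $\E{\lambda_t f}$ and $\E{Q_{t,i} f}$ as the partial derivatives $\partial M/\partial \delta_0$ and $\partial M/\partial \delta_i$. The only difference is that you make the analytic caveats (unboundedness of $f$, interchange of differentiation and expectation) explicit, which the paper relegates to the remark following Lemma~\ref{fubinifundQ}.
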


\begin{proof}
This proof makes use of techniques similar to the prior theorems, and so we omit the preceding infinitesimal generator steps. Note that $\frac{\partial M(\delta, t)}{\partial t} = \frac{\partial}{\partial t}\E{e^{\delta_0 \lambda_t + \sum_{i=1}^n \delta_i Q_{t,i}}} $. From this, we start with the following.
\begin{align*}
\frac{\partial M(\delta, t)}{\partial t}
&= \mathrm{E}\Bigg[\delta_0 \beta (\lambda^* - \lambda_t) e^{\delta_0 \lambda_t + \sum_{i=1}^n \delta_i Q_{t,i}}
+ \sum_{j=1}^n \lambda_t \theta_j \left(e^{\delta_0 (\lambda_t + \alpha) + \sum_{k \ne j} \delta_k Q_{t,k} + \delta_j (Q_{t,j} + 1)} - e^{\delta \lambda_t + \sum_{i=1}^n \delta_i Q_{t,i}} \right)
\\&+ \sum_{k=1}^n \sum_{j \ne k} \mu_{jk}Q_{t,j}\left(e^{\delta_0 \lambda_t + \sum_{l \ne j \wedge l \ne k} \delta_l Q_{t,l} + \delta_j(Q_{t,j} - 1) + \delta_k(Q_{t,k} + 1)} - e^{\delta_0 \lambda_t + \sum_{i=1}^n \delta_i Q_{t,i}} \right)
\\&+ \sum_{j=1}^n \mu_{j0}Q_{t,j}\left(e^{\delta_0 \lambda_t + \sum_{k \ne j} \delta_k Q_{t,k} + \delta_j(Q_{t,k} -1 )}-e^{\delta_0 \lambda_t + \sum_{i=1}^n \delta_i Q_{t,i}}\right) \Bigg]
\intertext{Now, we distribute terms and notice that the difference of exponentials here can be expressed as the following products.}
\frac{\partial M(\delta, t)}{\partial t}&= \mathrm{E}\Bigg[\delta_0 \beta \lambda^* e^{\delta_0 \lambda_t + \sum_{i=1}^n \delta_i Q_{t,i}}  - \delta_0 \beta\lambda_t e^{\delta_0 \lambda_t + \sum_{i=1}^n \delta_i Q_{t,i}}
+ \sum_{j=1}^n \lambda_t \theta_j e^{\delta_0 \lambda_t + \sum_{i=1}^n \delta_i Q_{t,i}} \left(e^{\delta_0 \alpha + \delta_j} - 1\right)
\\&+ \sum_{k=1}^n \sum_{j \ne k} \mu_{jk}Q_{t,j}e^{\delta_0 \lambda_t + \sum_{i=1}^n \delta_i Q_{t,i}} \left(e^{\delta_k - \delta_j} - 1\right)
+ \sum_{j=1}^n \mu_{j0}Q_{t,j}e^{\delta_0 \lambda_t + \sum_{i=1}^n \delta_i Q_{t,i}} \left(e^{-\delta_j} - 1\right) \Bigg]
\intertext{Here, we can now use linearity of expectation and group like terms.}
\frac{\partial M(\delta, t)}{\partial t}&= \delta_0 \beta \lambda^* \E{e^{\delta_0 \lambda_t + \sum_{i=1}^n \delta_i Q_{t,i}}}
+ \left( \sum_{j=1}^n \theta_j(e^{\delta_0 \alpha + \delta_j} - 1) - \delta_0 \beta\right) \E{\lambda_t e^{\delta_0 \lambda_t + \sum_{i=1}^n \delta_i Q_{t,i}}}
\\&+ \sum_{j=1}^n \left(\mu_{j0}(e^{-\delta_j}-1) + \sum_{k \ne j} \mu_{jk}(e^{\delta_k - \delta_j} - 1) \right) \E{Q_{t,j} e^{\delta_0 \lambda_t + \sum_{i=1}^n \delta_i Q_{t,i}}}
\end{align*}
Finally, here we recognize the form of partial derivatives of $M(\delta, t)$ in each expectation, and so we \edit{ simplify to the desired result.}
\end{proof}

We can use this to also find a partial differential equation for the natural logarithm of the moment generating function. This is called the cumulant moment generating function, as the derivative of this function yields the cumulant moments.

\begin{corollary}\label{cgfPDE}
Consider a queueing system with arrivals occurring in accordance to a Hawkes process $\left(\lambda_t, N_t\right)$ with dynamics given in Equation~\ref{hawkdyn} and phase-type distributed service. Let $\delta \in \mathbb{R}^{n+1}_+$ and let $G(\delta , t) = G(\delta_0 , \dots , \delta_n, t) = \log\left(\E{e^{\delta_0 \lambda_t + \sum_{i=1}^n \delta_i Q_{t,i}}}\right)$. Then, the cumulant moment generating function for the queueing system $G(\delta, t)$ is given by the solution to the following partial differential equation,
\begin{align}
\frac{\partial G(\delta, t)}{\partial t}  &= \delta_0 \beta \lambda^* + \left( \sum_{i=1}^n \theta_i(e^{\delta_0 \alpha + \delta_i} - 1) - \delta_0\beta \right)\frac{\partial G(\delta, t)}{\partial \delta_0}
\\&+ \sum_{i=1}^n\left(\mu_{i0}(e^{-\delta_i}-1) + \sum_{k \ne i} \mu_{ik}(e^{\delta_k-\delta_i}-1)\right) \frac{\partial G(\delta, t)}{\partial \delta_i}  \, .\nonumber
\end{align}
\end{corollary}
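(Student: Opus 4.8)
The plan is to exploit the identity $G(\delta, t) = \log M(\delta, t)$ and reduce the claim directly to the partial differential equation for $M(\delta,t)$ established in Theorem~\ref{MGF}. First I would observe that since $M(\delta, t) = \E{e^{\delta_0 \lambda_t + \sum_{i=1}^n \delta_i Q_{t,i}}}$ is the expectation of a strictly positive random variable, it is itself strictly positive wherever it is finite; hence $G = \log M$ is well-defined there and inherits the differentiability of $M$ in $t$ and in each of $\delta_0, \dots, \delta_n$. The chain rule then gives $\frac{\partial G}{\partial t} = \frac{1}{M}\frac{\partial M}{\partial t}$ and $\frac{\partial G}{\partial \delta_i} = \frac{1}{M}\frac{\partial M}{\partial \delta_i}$ for every $i \in \{0, 1, \dots, n\}$.

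Next I would take the partial differential equation for $M(\delta, t)$ from Theorem~\ref{MGF} and divide both sides by $M(\delta,t)$. On the right-hand side, the term $\delta_0 \beta \lambda^* M(\delta,t)$ collapses to $\delta_0 \beta \lambda^*$, while each remaining term is a coefficient depending only on $\delta$ (namely $\sum_{i=1}^n \theta_i(e^{\delta_0\alpha + \delta_i}-1) - \delta_0\beta$ or $\mu_{i0}(e^{-\delta_i}-1) + \sum_{k\ne i}\mu_{ik}(e^{\delta_k - \delta_i}-1)$) multiplying a partial derivative $\frac{\partial M}{\partial \delta_i}$; dividing by $M$ turns each such factor $\frac{1}{M}\frac{\partial M}{\partial \delta_i}$ into $\frac{\partial G}{\partial \delta_i}$. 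On the left-hand side, $\frac{1}{M}\frac{\partial M}{\partial t}$ becomes $\frac{\partial G}{\partial t}$. Collecting terms yields precisely the stated equation.

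I do not anticipate a genuine obstacle here; this is essentially an immediate consequence of Theorem~\ref{MGF}. The only point requiring any care is the justification that $M$ is positive and regular enough for the logarithm and the termwise chain rule to be applied, which follows from positivity of the exponential and the smoothness already implicit in Theorem~\ref{MGF}. If one wished to be fully precise about the admissible set of $\delta$, one would restrict attention to those $\delta$ for which $M(\delta, t)$ is finite on a neighborhood, but this is routine and does not alter the form of the PDE.
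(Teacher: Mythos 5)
Your proposal is correct and follows essentially the same route as the paper: the paper's proof likewise applies the chain rule to write $\frac{\partial G}{\partial t}$ as $\frac{1}{M}\frac{\partial M}{\partial t}$, substitutes the PDE from Theorem~\ref{MGF}, and identifies each $\frac{1}{M}\frac{\partial M}{\partial \delta_i}$ with $\frac{\partial G}{\partial \delta_i}$. Your added remarks on positivity of $M$ and the admissible set of $\delta$ are reasonable but not part of the paper's argument.
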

\begin{proof}
To begin, we see from the derivative of the logarithm and the chain rule that
\begin{align*}
\frac{\partial G(\delta, t)}{\partial t}
&=
\frac{\partial }{\partial t}
\log\left(\E{e^{\delta_0 \lambda_t + \sum_{i=1}^n \delta_i Q_{t,i}}}\right)
=
\frac{\frac{\partial }{\partial t}
\E{e^{\delta_0 \lambda_t + \sum_{i=1}^n \delta_i Q_{t,i}}}}
{
\E{e^{\delta_0 \lambda_t + \sum_{i=1}^n \delta_i Q_{t,i}}}
}
\intertext{and here we can recognize that these expectations are the moment generating function. Using Theorem~\ref{MGF}, we have}
\frac{\partial G(\delta, t)}{\partial t}
&=
 \delta_0 \beta \lambda^*
 +
 \left(\sum_{i=1}^n \theta_i(e^{\delta_0 \alpha + \delta_i} - 1) - \delta_0\beta\right)\frac{\frac{\partial }{\partial \delta_0}
\E{e^{\delta_0 \lambda_t + \sum_{i=1}^n \delta_i Q_{t,i}}}}
{
\E{e^{\delta_0 \lambda_t + \sum_{i=1}^n \delta_i Q_{t,i}}}
}
\\&\quad
+
\sum_{i=1}^n\left(\mu_{i0}(e^{-\delta_i}-1) + \sum_{k \ne i} \mu_{ik}(e^{\delta_k-\delta_i}-1)\right) \frac{\frac{\partial }{\partial \delta_i}
\E{e^{\delta_0 \lambda_t + \sum_{i=1}^n \delta_i Q_{t,i}}}}
{
\E{e^{\delta_0 \lambda_t + \sum_{i=1}^n \delta_i Q_{t,i}}} }.
\end{align*}
Now we recognize that $\frac{\frac{\partial }{\partial \delta_i}
\E{e^{\delta_0 \lambda_t + \sum_{i=1}^n \delta_i Q_{t,i}}}}
{
\E{e^{\delta_0 \lambda_t + \sum_{i=1}^n \delta_i Q_{t,i}}} }= \frac{\partial G(\delta, t)}{\partial \delta_i} $, and so we have the stated result.
\end{proof}

Comparing these two partial differential equations, we see that the expression for the cumulant moment generating function only depends on the partial derivatives, not on the function itself.  In some cases the cumulant moment generating function is better since \edit{it} directly will compute the variance, skewness, and higher order cumulants directly without having to know the relationships between cumulants and moments.  Moreover, the \edit{cumulant} moments have shift and scale invariance properties, which are often desired. \edit{The PDE in Corollary~\ref{cgfPDE} produces a form that \minrev{provides} insight to the \minrev{solution} through use of the method of characteristics, which we now show in the following theorem.}

\edit{
\begin{theorem}
Consider a queueing system with arrivals occurring in accordance to a Hawkes process $\left(\lambda_t, N_t\right)$ with dynamics given in Equation~\ref{hawkdyn} and phase-type distributed service with transient state sub-generator matrix $S \in \mathbb{R}^{n \times n}$. Let $\delta \in \mathbb{R}^{n+1}_+$ and let $G(\delta , t) = G(\delta_0 , \dots , \delta_n, t) = \log\left(\E{e^{\delta_0 \lambda_t + \sum_{i=1}^n \delta_i Q_{t,i}}}\right)$. Then, the cumulant moment generating function for the queueing system $G(\delta, t)$ is given by
\begin{align}
G(\delta, t)
&
=
\beta \lambda^* \int_0^t  h(z) \mathrm{d}z
+
h(0)\lambda_0
\end{align}
where $h(z)$ is the solution to the ordinary differential equation
$$
\updot{h}(z) = 1 - e^{\alpha h(z)}\theta^\T \left(\onevec{} + e^{-S(z-t)}\left(e^{\diag{\delta}} - I\right)\onevec{} \right) + \beta h(z)
$$
with initial value $h(t) = \delta_0$.
\end{theorem}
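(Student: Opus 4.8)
The plan is to solve the first-order PDE for $G$ supplied by Corollary~\ref{cgfPDE} via the method of characteristics, reading the components of the characteristic curve directly as the ingredients of the claimed formula. I would rewrite that PDE in transport form as $\partial_t G - b_0(\delta)\partial_{\delta_0}G - \sum_{i=1}^n b_i(\delta)\partial_{\delta_i}G = \delta_0\beta\lambda^*$, where $b_0(\delta) = \sum_{i=1}^n\theta_i(e^{\delta_0\alpha+\delta_i}-1) - \delta_0\beta$ and $b_i(\delta) = \mu_{i0}(e^{-\delta_i}-1) + \sum_{k\neq i}\mu_{ik}(e^{\delta_k-\delta_i}-1)$. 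Fixing a target point $(\delta,t)$, I would introduce the characteristic curve $z\mapsto \delta(z) = (\delta_0(z),\ldots,\delta_n(z))$ solving $\dot\delta_j(z) = -b_j(\delta(z))$ with terminal data $\delta(t)=\delta$, and set $h(z)\equiv\delta_0(z)$. Differentiating $G$ along this curve and substituting the PDE gives $\frac{\mathrm d}{\mathrm d z}G(\delta(z),z) = \partial_t G + \sum_j\dot\delta_j\partial_{\delta_j}G = \beta\lambda^* h(z)$, which is the relation to be integrated at the end.

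The crux is to make the characteristic system explicit. First I would show that the service coordinates linearize under the substitution $u(z)\equiv e^{\diag{\delta(z)}}\onevec{} - \onevec{}$, whose $i$-th entry is $e^{\delta_i(z)}-1$: differentiating and using $\dot\delta_i=-b_i$ together with $\mu_i=\mu_{i0}+\sum_{k\neq i}\mu_{ik}$ collapses the derivative to $\dot u_i(z) = \mu_i e^{\delta_i(z)} - \mu_{i0} - \sum_{k\neq i}\mu_{ik}e^{\delta_k(z)}$, and this is exactly $-(Su)_i$ since $S_{ii}=-\mu_i$, $S_{ij}=\mu_{ij}$ for $j\neq i$, and $\mu_{i0}=\mu_i-\sum_{j\neq i}\mu_{ij}$. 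Hence $u(z) = e^{-S(z-t)}u(t) = e^{-S(z-t)}(e^{\diag{\delta}}-I)\onevec{}$. Substituting this into $\dot h(z) = -b_0(\delta(z)) = \beta h(z) + \theta^\T\onevec{} - e^{\alpha h(z)}\theta^\T(u(z)+\onevec{})$ and using $\theta^\T\onevec{}=1$ yields precisely the stated scalar ODE for $h$ with terminal value $h(t)=\delta_0$.

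It then remains only to assemble the formula. Integrating $\frac{\mathrm d}{\mathrm d z}G(\delta(z),z) = \beta\lambda^* h(z)$ over $z\in[0,t]$ gives $G(\delta,t) = G(\delta(t),t) = G(\delta(0),0) + \beta\lambda^*\int_0^t h(z)\,\mathrm d z$, and since the system starts empty $G(\delta(0),0) = \log\E{e^{\delta_0(0)\lambda_0}} = \delta_0(0)\lambda_0 = h(0)\lambda_0$, which is the claimed expression.

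I expect the main obstacle to be the rigorous justification of the method of characteristics rather than the algebra: one must know that $G$ is $C^1$ in $(\delta,t)$ on the relevant region so the PDE of Corollary~\ref{cgfPDE} holds classically, invoke Picard--Lindel\"of for local existence and uniqueness of the characteristic ODEs (immediate since the $b_j$ are smooth), and verify that the characteristic does not leave that region before time $0$; the $u$-equation is linear and hence global, but the $h$-equation carries the $e^{\alpha h}$ term and can blow up in finite time, so one should restrict $\delta$ to a neighbourhood of the origin on which the moment generating function is finite and argue that $h$ stays bounded there. I would also flag the standing assumption $Q_0=0$, which is what lets the boundary term $G(\delta(0),0)$ reduce to $h(0)\lambda_0$.
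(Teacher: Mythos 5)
Your proposal is correct and follows essentially the same route as the paper: the method of characteristics applied to the PDE of Corollary~\ref{cgfPDE}, the exponential substitution that linearizes the service-coordinate characteristics (you use $u$ with $\dot u = -Su$ where the paper uses $x = u + \onevec{}$ and the affine system $\dot x = -Sx + S\onevec{}$), the resulting scalar ODE for $h$, and integration of the source term $\beta\lambda^* h$ along the characteristic with boundary value $h(0)\lambda_0$ coming from the empty initial condition. Your closing remarks on regularity and finiteness of the moment generating function address points the paper leaves implicit, but they do not change the argument.
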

\begin{proof}
We proceed by the method of characteristics for the PDE given in Corollary~\ref{cgfPDE}. To do so, let $z$ be a parametrization variable and let $\Delta_0, \Delta_1, \dots, \Delta_n$ be characteristics variables. From recognizing the linearity of the PDE, we see that we can implement the method of characteristics by setting $\updot{\Delta}_i(z) := \frac{\mathrm{d}\Delta_i(z)}{\mathrm{d}z}$ equal to the function serving as coefficient of $\frac{\partial G(\delta, t)}{\partial \delta_i}$ in the PDE for each $i \in \{0 , \dots , n\}$, each with initial condition that $\Delta_i(t) = \delta_i$. This yields the following system of characteristic ODE's:
\begin{align*}
\updot{\Delta}_0(z) &= 1 - e^{\Delta_0 \alpha}\sum_{j\ne i}\theta_je^{\Delta_j} + \Delta_0 \beta, \\
\updot{\Delta}_i(z) &= \mu_i - \mu_{i0}e^{-\Delta_i} - \sum_{j \ne i} \mu_{ij}e^{\Delta_j - \Delta_i} &&  \forall i \in \{1, \dots , n\}.
\end{align*}
We now let $x \in \mathbb{R}^n$ be such that $x_i = e^{\Delta_i}$. Note that this substitution can also be expressed $x = e^{\diag{\Delta}}\onevec{}$, as this will be of use in solving the system. Then, we have that $\updot{x}_i(z) = x_i(z) \updot{\Delta}_i(z)$. In this form, the last $n$ characteristic ODE's can be expressed as
$$
\updot{x}(z) =  - S x(z) + S \onevec{}
$$
which means that
$$
x(z) = \onevec{} + e^{-S (z - t)}\left(e^{\diag{\delta}} - I\right)\onevec{}
$$
\edit{where} we have used the initial condition $x(t) = e^{\diag{\Delta(t)}} = e^{\diag{\delta}}$. We now note that to follow the method of characteristics fully and receive a closed form \minrev{solution} to the PDE we would want to solve the remaining characteristic ODE
$$
\updot{\Delta}_0(z)
=
1 - e^{\Delta_0 \alpha}\sum_{j\ne i}\theta_j e^{\Delta_j} + \Delta_0 \beta
=
1 - e^{\Delta_0 \alpha}\theta^\T x + \Delta_i \beta
$$
which has initial condition that $\Delta_0(t) = \delta_0$. Because this form of ODE is not known to have a closed form solution in terms of standard math functions, we let $h(z)$ be defined as the solution to this initial value problem. Then, we now complete the method of characteristics by solving
$$
\updot{g}(z) = \beta \lambda^* \Delta_0(z) = \beta\lambda^* h(z)
$$
with the initial condition that $g(0) = G(\Delta(0), 0) = \Delta_0(0) \lambda_0 = h(0)\lambda_0$. Since this ODE is already separated, we have
$$
g(z) - h(0)\lambda_0
=
g(z) - g(0)
=
\int_0^z \updot{g}(\xi) \mathrm{d}\xi
=
\beta \lambda^* \int_0^z  h(\xi) \mathrm{d}\xi .
$$
Thus, we now have
$$
G(\delta, t)
=
g(t)
=
\beta \lambda^* \int_0^t  h(\xi) \mathrm{d}\xi
+
h(0)\lambda_0
$$
and this is the stated result.
%\\
%\\Note that alternatively this statement could be verified for correctness by validation through use of the chain rule and uniqueness of solution due to the linearity of the partial differential equation. We demonstrate this below. By use of Leibniz rule on this theorem's stated result, we see that
%\begin{align*}
%\frac{\partial G(\delta, t)}{\partial t}
%&=
%\beta \lambda^* h(t) + \int_0^t \frac{\partial h(z)}{\partial t} \mathrm{d}z ?
%\end{align*}
\end{proof} 
}

\edit{While the ODE in this statement may not be able to be solved for a closed form expression outside of special cases, this reduction of the PDE to an ODE simplifies numerical implementations. We now note that this of course extends to the moment generating function as well by simply taking the exponential of the cumulant generating function.}

\subsection{Simulation Study}\label{subsec_sim}

To conclude Section~\ref{sec_hawkphinf} we provide a collection of simulation examples that verify the accuracy of our expressions for the moments in a variety of settings. In each example we derive the simulated functions via 100,000 replications of the procedure described in \citet{ogata1981lewis}. We start with the mean and variance of a single phase system, as shown in the pair of plots below in Figure~\ref{queuecomp}.

\vspace{-.05in}
\begin{figure}[h]
	\begin{center}
\includegraphics[width=.45\textwidth]{./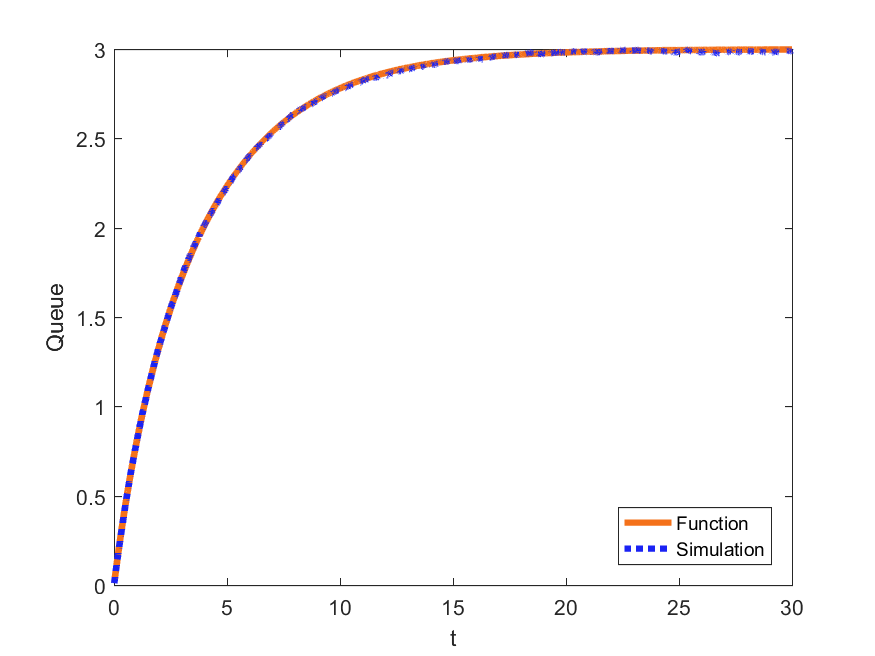}\includegraphics[width=.45\textwidth]{./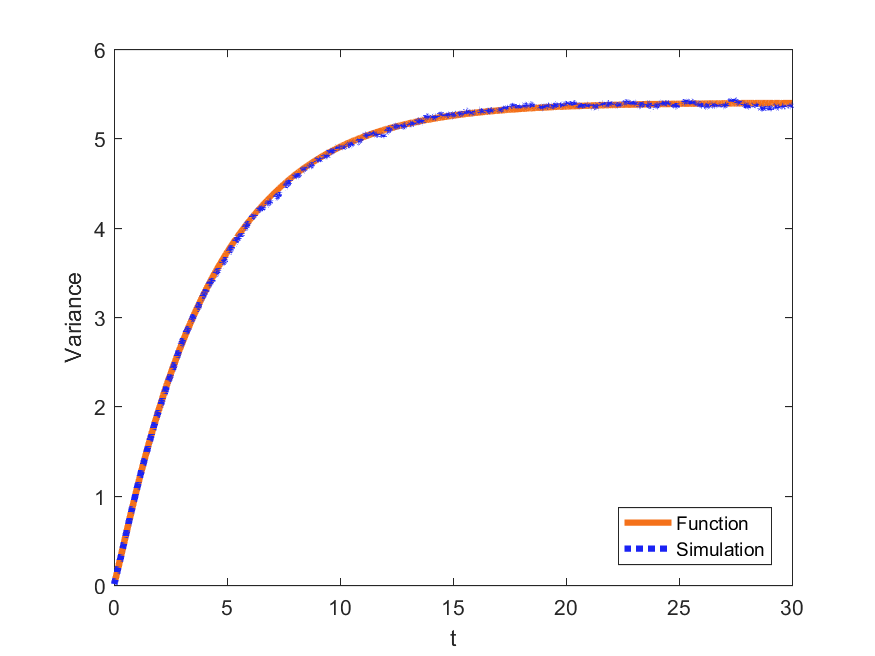}
\caption{Mean (left) and Variance (right) of ${Q_t}$ in ${Hawkes / M / \infty}$, ${\alpha = \frac 1 2}$, ${\beta = \frac 3 4}$, ${\lambda^* = \mu = 1}$.} \label{queuecomp}
\end{center}
\end{figure}

As a second example, we also consider a three-phase Erlang distributed service. We use two different parameter settings, one in which the mean service duration is 1 and another in which the mean service length is 6. In the first case, $\alpha = \frac{1}{2}$, $\beta = \frac{3}{4}$, and $\lambda^* = 1$. In the latter, $\alpha = \frac{3}{4}$, $\beta = \frac{5}{4}$, and $\lambda^* = 1$. The mean is shown in Figure~\ref{queueerlang}, the variance in Figure~\ref{varerlang}, the covariance of the queue and the intensity in Figure~\ref{covqlerlang}, and the covariance of the phases of the queue in Figure~\ref{covqqerlang}.

\vspace{-.25in}
\begin{figure}[h]
	\begin{center}
\includegraphics[width=.45\textwidth]{./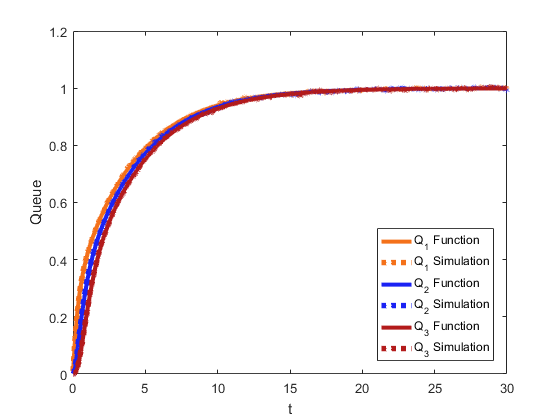}\includegraphics[width=.45\textwidth]{./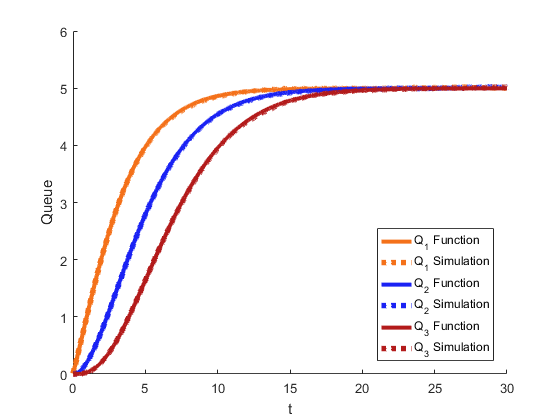}
\caption{Mean of the ${Hawkes / E_3 / \infty}$ Queue, where ${\alpha = \frac 1 2}$, ${\beta = \frac 3 4}$, ${\lambda^* = 1}$, ${\frac{1}{\mu} = 1}$ (left) and  ${\alpha = \frac 3 4}$, ${\beta = \frac 5 4}$, ${\lambda^* = 1}$, ${\frac{1}{\mu}  = 6}$ (right).} \label{queueerlang}
\end{center}
\end{figure}

\begin{figure}[h]
	\begin{center}
\includegraphics[width=.45\textwidth]{./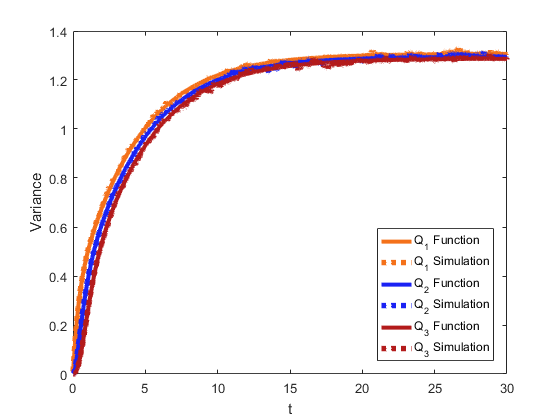}\includegraphics[width=.45\textwidth]{./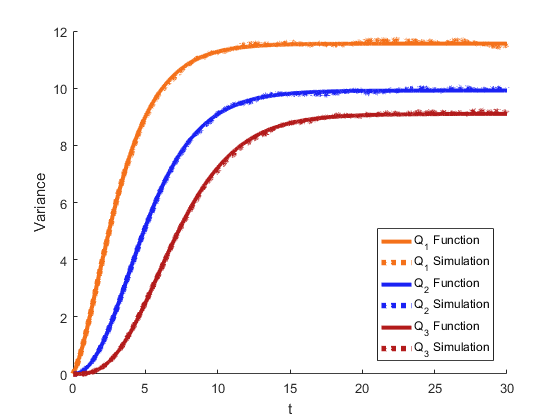}
\caption{Variance of the ${Hawkes / E_3 / \infty}$ Queue, where ${\alpha = \frac 1 2}$, ${\beta = \frac 3 4}$, ${\lambda^* = 1}$, ${\frac{1}{\mu} = 1}$ (left) and  ${\alpha = \frac 3 4}$, ${\beta = \frac 5 4}$, ${\lambda^* = 1}$, ${\frac{1}{\mu}  = 6}$ (right).} \label{varerlang}
\end{center}
\end{figure}

\begin{figure}[h]
	\begin{center}
\includegraphics[width=.45\textwidth]{./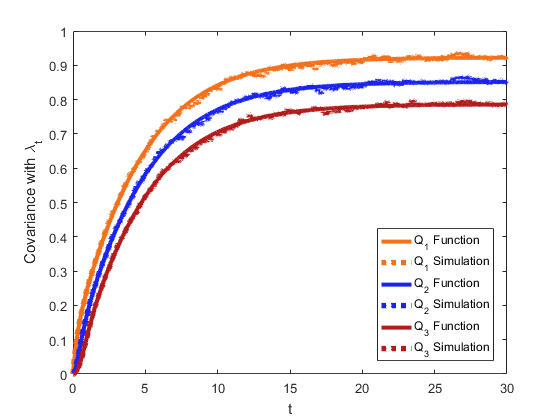}\includegraphics[width=.45\textwidth]{./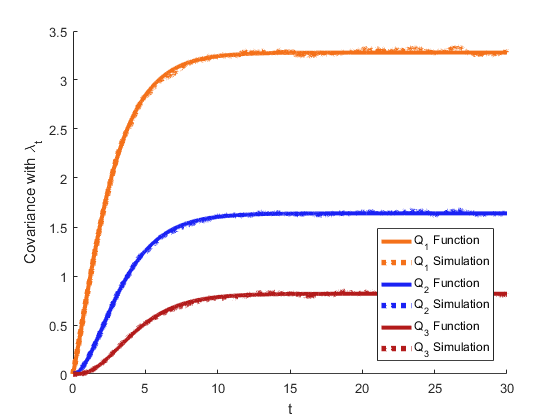}
\caption{Covariance of ${Hawkes / E_3 / \infty}$ Queue, where ${\alpha = \frac 1 2}$, ${\beta = \frac 3 4}$, ${\lambda^* = 1}$, ${\frac{1}{\mu} = 1}$ (left) and  ${\alpha = \frac 3 4}$, ${\beta = \frac 5 4}$, ${\lambda^* = 1}$, ${\frac{1}{\mu}  = 6}$ (right).}
\label{covqlerlang}
\end{center}
\end{figure}

\begin{figure}[h]
	\begin{center}
\includegraphics[width=.45\textwidth]{./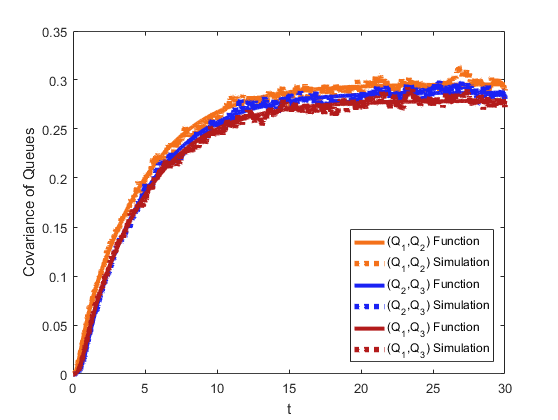}\includegraphics[width=.45\textwidth]{./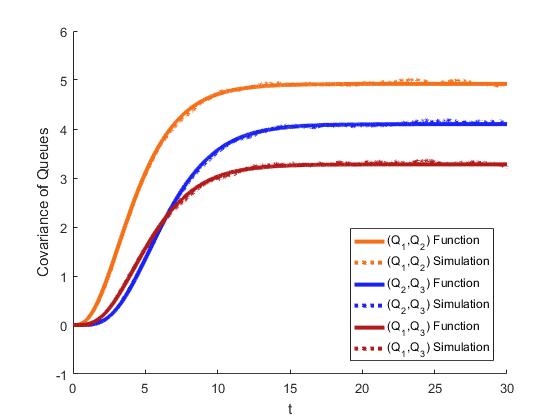}
\caption{Covariance between Phases in the ${Hawkes / E_3 / \infty}$ Queue, where ${\alpha = \frac 1 2}$, ${\beta = \frac 3 4}$, ${\lambda^* = 1}$, ${\frac{1}{\mu} = 1}$ (left) and  ${\alpha = \frac 3 4}$, ${\beta = \frac 5 4}$, ${\lambda^* = 1}$, ${\frac{1}{\mu}  = 6}$ (right).}  \label{covqqerlang}
\end{center}
\end{figure}

In addition to the Erlang setting, we also verify the performance of the hyper-exponential service equations. We again consider a three phase distributed service and display a pair of scenarios. In both parameter groups $\theta = [.15, .4, .45]^\T$ and $\mu = [1, 4, 6]^\T$. In the first setting we consider $\alpha = \frac 1 2$, $\beta = 1$, and $\lambda^* = 2$, whereas in the second setting $\alpha = 1$, $\beta = 2$, and $\lambda^* = 2$. These are displayed in the same order as the Erlang examples are: mean in Figure~\ref{queuehyp}, variance in Figure~\ref{varhyp}, covariance with the intensity in Figure~\ref{covqlhyp}, and covariance of the queues in Figure~\ref{covqqhyp}.

\begin{figure}[h]
	\begin{center}
\includegraphics[width=.45\textwidth]{./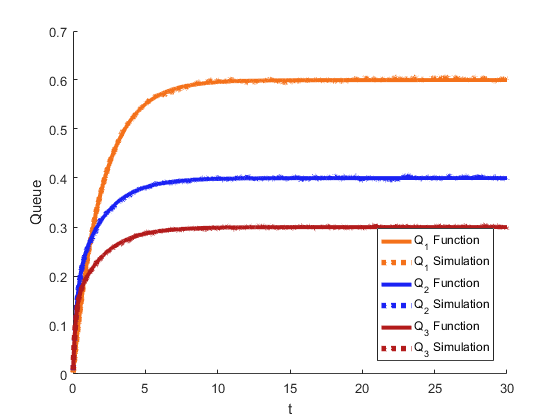}\includegraphics[width=.45\textwidth]{./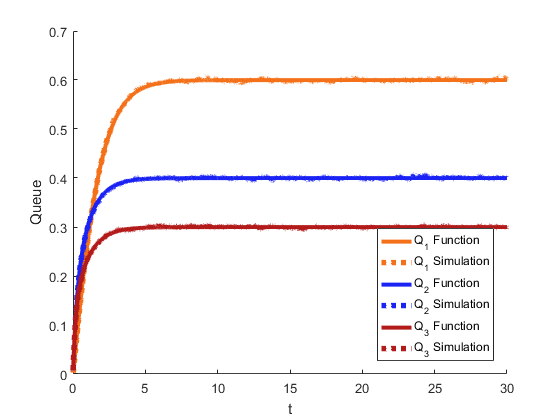}
\caption{Mean of the ${Hawkes / H_3 / \infty}$ Queue, where ${\alpha = \frac 1 2}$, ${\beta = 1}$, ${\lambda^* = 2}$, ${\theta = [.15, .4, .45]^\T}$, ${\mu = [1, 4, 6]^\T}$ (left) and  ${\alpha = 1}$, ${\beta = 2}$, ${\lambda^* = 2}$, ${\theta = [.15, .4, .45]^\T}$, ${\mu = [1, 4, 6]^\T}$ (right).} \label{queuehyp}
\end{center}
\end{figure}

\begin{figure}[h]
	\begin{center}
\includegraphics[width=.45\textwidth]{./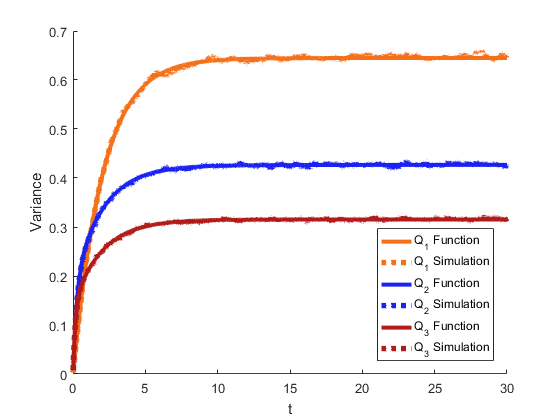}\includegraphics[width=.45\textwidth]{./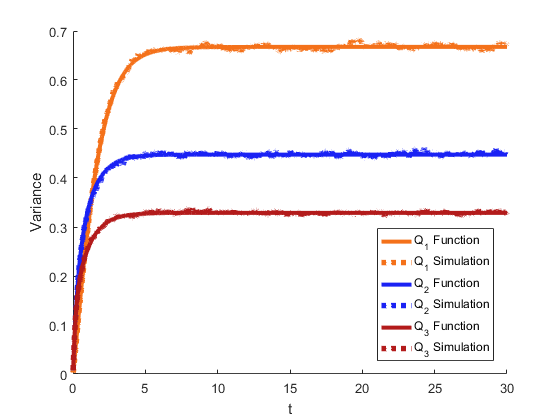}
\caption{Variance of the ${Hawkes / H_3 / \infty}$ Queue, where ${\alpha = \frac 1 2}$, ${\beta = 1}$, ${\lambda^* = 2}$, ${\theta = [.15, .4, .45]^\T}$, ${\mu = [1, 4, 6]^\T}$ (left) and  ${\alpha = 1}$, ${\beta = 2}$, ${\lambda^* = 2}$, ${\theta = [.15, .4, .45]^\T}$, ${\mu = [1, 4, 6]^\T}$ (right).} \label{varhyp}
\end{center}
\end{figure}

\begin{figure}[h]
	\begin{center}
\includegraphics[width=.45\textwidth]{./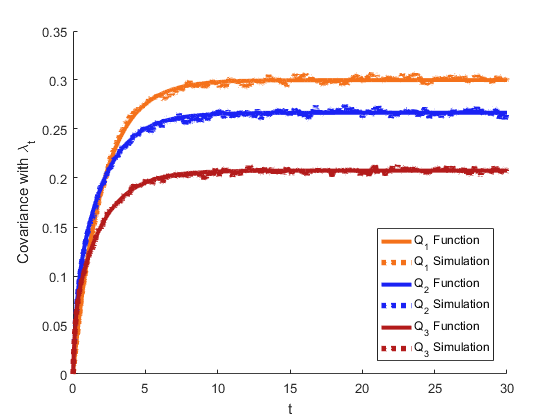}\includegraphics[width=.45\textwidth]{./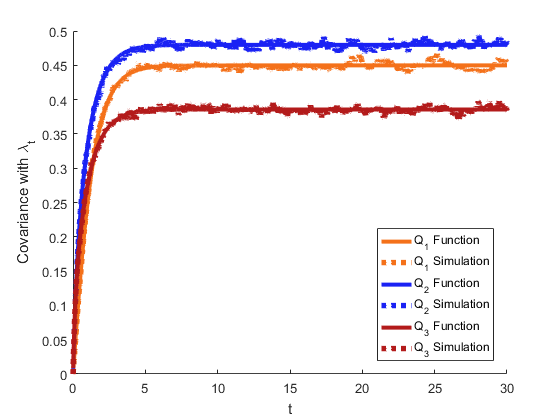}
\caption{Covariance of ${\lambda_t}$ and the ${Hawkes / H_3 / \infty}$ Queue, where ${\alpha = \frac 1 2}$, ${\beta = 1}$, ${\lambda^* = 2}$, ${\theta = [.15, .4, .45]^\T}$, ${\mu = [1, 4, 6]^\T}$ (left) and  ${\alpha = 1}$, ${\beta = 2}$, ${\lambda^* = 2}$, ${\theta = [.15, .4, .45]^\T}$, ${\mu = [1, 4, 6]^\T}$ (right).}  \label{covqlhyp}
\end{center}
\end{figure}

\begin{figure}[h]
	\begin{center}
\includegraphics[width=.45\textwidth]{./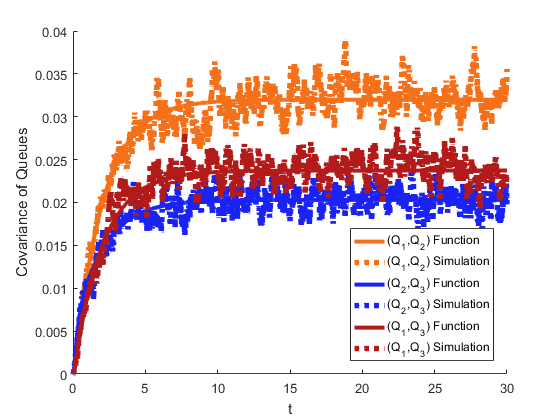}\includegraphics[width=.45\textwidth]{./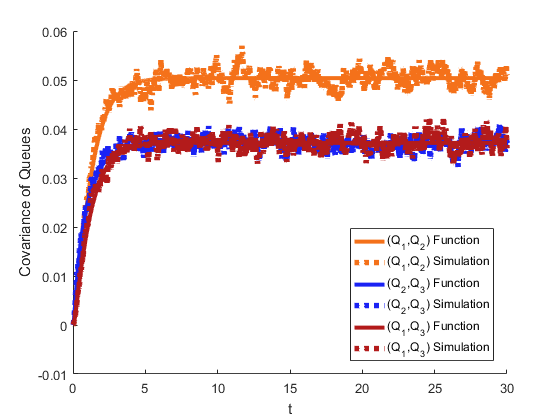}
\caption{Covariance between Phases in the ${Hawkes / H_3 / \infty}$ Queue, where ${\alpha = \frac 1 2}$, ${\beta = 1}$, ${\lambda^* = 2}$, ${\theta = [.15, .4, .45]^\T}$, ${\mu = [1, 4, 6]^\T}$ (left) and  ${\alpha = 1}$, ${\beta = 2}$, ${\lambda^* = 2}$, ${\theta = [.15, .4, .45]^\T}$, ${\mu = [1, 4, 6]^\T}$ (right).} \label{covqqhyp}
\end{center}
\end{figure}

In conducting these simulation experiments we have made an interesting observation. Consider the following example: let $\lambda^* = 1$, $\alpha = 1$, and $\beta = 2$. Then, let $D = 1$ be the fixed service length in a $Hawkes/D/\infty$ system and let $\mu = 1$ be the parameter of the exponential distribution in a $Hawkes/M/\infty$ system. We plot the simulated variances of these two systems in Figure~\ref{hawkesMDfig} based on 10,000 replications, in which we find that the variance is larger in the deterministic service setting.

 \begin{figure}[h]
\begin{center}	
\includegraphics[width=.6\textwidth]{./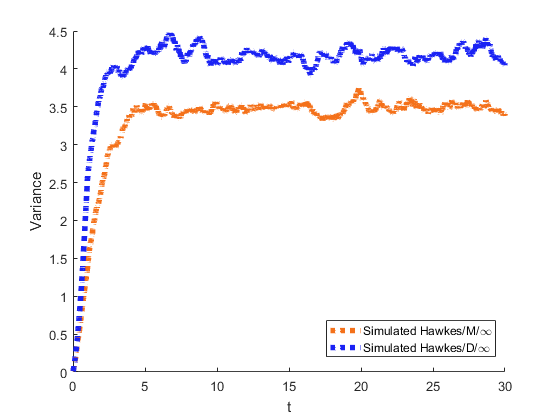}
\caption{Comparison of Variances in ${Hawkes/M/\infty}$ and ${Hawkes/D/\infty}$ Queues when ${\frac{1}{\mu} = D = 1}$, with ${\lambda^* = 1}$, ${\alpha = 1}$, and ${\beta = 2}$.}
\label{hawkesMDfig}
\end{center}
\end{figure}

 While this relationship may seem unexpected, there is an intuitive explanation for it. Because the Hawkes process exhibits clustering behavior in the arrival times, a service system with fixed service length will also experience clusters of departures times. By comparison, a system with random service durations has the opportunity to counteract the clustering behavior and disperse the departure times. In Proposition~\ref{hawkMDcomp} we show that the steady-state variance in the deterministic service setting is greater than that of the exponential service setting. 
 
 \begin{proposition}\label{hawkMDcomp}
 For equal Hawkes process parameters $\lambda^*$, $\alpha$, and $\beta$ and equivalent service parameters $D = \frac{1}\mu > 0$, the steady-state variance of the $Hawkes/D/\infty$ queue is greater than the steady-state variance of the $Hawkes/M/\infty$ queue.
 \end{proposition}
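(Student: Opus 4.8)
The plan is to write down closed forms for both steady-state variances, subtract them, and reduce the sign of the difference to a single one-variable inequality that can be settled by an elementary calculus argument.

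First I would obtain $\lim_{t\to\infty}\mathrm{Var}[Q_t]$ for the $Hawkes/D/\infty$ queue. Starting from the transient variance $\mathcal{C}(t,0) + \mathcal{C}(t-D,0) - 2\,\mathcal{C}(t,D)$ of Theorem~\ref{hawkesDinfthem} and using the explicit form of $\mathcal{C}(t,\tau)$ from Lemma~\ref{cdeflemma}, I would check that in this combination the linear-in-$t$ slopes cancel down to a finite constant, that the terms proportional to $t e^{-(\beta-\alpha)t}$ vanish, and that all $\lambda_0$-dependent constants cancel (as they must, since the steady state does not depend on the initial intensity), leaving
$$
\mathrm{Var}[Q_\infty^D] = \frac{\beta^2\lambda_\infty}{(\beta-\alpha)^2}\,D - \frac{\alpha(2\beta-\alpha)\lambda_\infty}{(\beta-\alpha)^3}\left(1 - e^{-(\beta-\alpha)D}\right);
$$
equivalently this is the variance of the number of points of the stationary Hawkes process in a window of length $D$, obtained by integrating its stationary covariance density. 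For the $Hawkes/M/\infty$ queue I would invoke Equation~\ref{hminfsteady} of Remark~\ref{hawkqueuelimitremark} with $\mu = 1/D$, which gives
$$
\mathrm{Var}[Q_\infty^M] = \lambda_\infty D + \frac{\lambda_\infty\,\alpha(2\beta-\alpha)\,D^2}{2(\beta-\alpha)\bigl(1 + (\beta-\alpha)D\bigr)}.
$$

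Next I would subtract. Using $\beta^2 - (\beta-\alpha)^2 = \alpha(2\beta-\alpha)$ to combine the linear terms, factoring out the strictly positive constant $\tfrac{\lambda_\infty\,\alpha(2\beta-\alpha)}{(\beta-\alpha)^3}$ (strictly positive because $\alpha<\beta$ forces $2\beta-\alpha = \beta + (\beta-\alpha) > 0$), and substituting $r = (\beta-\alpha)D > 0$, the difference collapses to
$$
\mathrm{Var}[Q_\infty^D] - \mathrm{Var}[Q_\infty^M] = \frac{\lambda_\infty\,\alpha(2\beta-\alpha)}{(\beta-\alpha)^3}\,\phi(r), \qquad \phi(r) := r - 1 + e^{-r} - \frac{r^2}{2(1+r)},
$$
so the proposition reduces to proving $\phi(r) > 0$ for all $r > 0$. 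Since $\phi(0) = 0$, it suffices to show $\phi'(r) > 0$ on $(0,\infty)$. A direct computation gives $\phi'(r) = \tfrac12 - e^{-r} + \tfrac{1}{2(1+r)^2}$, and after multiplying by the positive quantity $2(1+r)^2 e^{r}$ this is positive iff $e^{r}\bigl((1+r)^2 + 1\bigr) > 2(1+r)^2$. Using $e^{r} > 1+r$ for $r>0$ and then factoring, $e^{r}\bigl((1+r)^2+1\bigr) - 2(1+r)^2 > (1+r)\bigl((1+r)^2 + 1 - 2(1+r)\bigr) = (1+r)\bigl((1+r)-1\bigr)^2 = (1+r)r^2 > 0$, so $\phi$ is strictly increasing and $\phi(r) > \phi(0) = 0$.

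I expect the main obstacle to be the first step: each of $\mathcal{C}(t,0)$, $\mathcal{C}(t-D,0)$, and $\mathcal{C}(t,D)$ diverges individually, so some care is needed to verify that the linear slopes, the $te^{-(\beta-\alpha)t}$ pieces, and the $\lambda_0$-dependent constants really do cancel in the combination and to record the correct finite remainder. Once that limit is in hand, everything else is a routine algebraic reduction together with the elementary scalar estimate above.
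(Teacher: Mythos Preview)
Your proposal is correct and follows essentially the same route as the paper: compute both steady-state variances explicitly, subtract, factor out the positive constant $\lambda_\infty\,\alpha(2\beta-\alpha)/(\beta-\alpha)^3$, and reduce to a one-variable inequality in $r=(\beta-\alpha)D$ that is settled by checking a derivative. The only cosmetic difference is the final normalization --- the paper multiplies your bracket by an additional $2(1+r)$ to get $\Upsilon(r)=r^2-2(1-e^{-r})+2re^{-r}=2(1+r)\phi(r)$, whose derivative $\Upsilon'(r)=2r(1-e^{-r})$ is manifestly positive, so the $e^r>1+r$ estimate you invoke is not needed.
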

 \begin{proof}
 Let $\beta > \alpha > 0$ and let $\lambda^* > 0$. Further, let $D = \frac{1}\mu > 0$. By Theorem~\ref{hawkesDinfthem}, the steady-state variance of the $Hawkes/D/\infty$ queue is
 \begin{align*}
 \mathcal{V}_{\mathrm{D}} \equiv \lambda_\infty D\left(1 + \frac{2\alpha\beta - \alpha^2}{(\beta-\alpha)^2}\right) - \lambda_\infty(1 - e^{-(\beta-\alpha)D})\frac{2\alpha\beta - \alpha^2}{(\beta-\alpha)^3}.
 \end{align*}
 Likewise, Corollary~\ref{hawkqueuelimit} gives the steady-state variance in the exponential service case as
  \begin{align*}
 \mathcal{V}_{\mathrm{M}} 
 \equiv 
 \frac{\lambda_\infty}{\mu}\left(1 + \frac{2\alpha\beta - \alpha^2}{2(\beta-\alpha)(\mu +\beta-\alpha)}\right) ,
 \end{align*}
 as noted in Remark~\ref{hawkqueuelimitremark}. Then, the difference between these terms is
 \begin{align*}
 \mathcal{V}_{\mathrm{D}} - \mathcal{V}_{\mathrm{M}}
 =
 \frac{\lambda_\infty}{\mu}\left(\frac{2\alpha\beta - \alpha^2}{(\beta-\alpha)^2}\right)\left(1 - \frac{\beta - \alpha}{2(\mu+\beta-\alpha)} - \frac{\mu - \mu e^{-(\beta - \alpha)\frac 1 \mu}}{\beta - \alpha}\right),
 \end{align*}
 where we have substituted $\frac{1}\mu$ for $D$. Because of the assumed relationships among the parameters, $\mathcal{V}_{\mathrm{D}} - \mathcal{V}_{\mathrm{M}}$ is positive if and only if the expression inside the lattermost parenthesis is. Multiplying this expression by $\frac{2}{\mu^2}(\mu + \beta-\alpha)(\beta-\alpha) > 0$ and simplifying yields
 $$
 \Upsilon \left(\frac{\beta-\alpha}{\mu}\right) 
 \equiv
  \left(\frac{\beta-\alpha}{\mu}\right)^2 - 2\left(1-e^{-\frac{\beta-\alpha}{\mu}}\right) + 2 \left(\frac{\beta-\alpha}{\mu}\right)e^{-\frac{\beta-\alpha}\mu} .
 $$
 We can re-parameterize this expression as $\Upsilon(x)$ for $x \equiv  \frac{\beta-\alpha}{\mu}$. By checking the first derivative of $\Upsilon(x)$, we see that it is strictly increasing for $x \geq 0$. Since $\Upsilon(0) = 0$ and $\frac{\beta-\alpha}{\mu} > 0$ for any valid $\alpha$, $\beta$, and $\mu$, we  have that $\mathcal{V}_{\mathrm{D}} - \mathcal{V}_{\mathrm{M}} > 0$.
 \end{proof} 
 
 In Figure~\ref{hawkesLN} we observe that this behavior can also occur in non-Markovian service settings, shown here for lognormal distributions based on 10,000 simulation replications. In this experiment each lognormal distribution has a mean of 1 and the variances increase from 0 to 5 with a step size of 0.5. Note that all the mean queue lengths appear to be converging to 1 in steady-state. Further, we see that the means of systems with higher variance in the lognormal service distribution are converging more slowly than those of lower lognormal variance. However, the opposite relationship appears to hold in terms of the variances of the queues: the higher the variance of the lognormal, the lower the variance of the queue.\\
 
 \vspace{-.2in}
 \begin{figure}[h]
	\begin{center}
\includegraphics[width=.495\textwidth]{./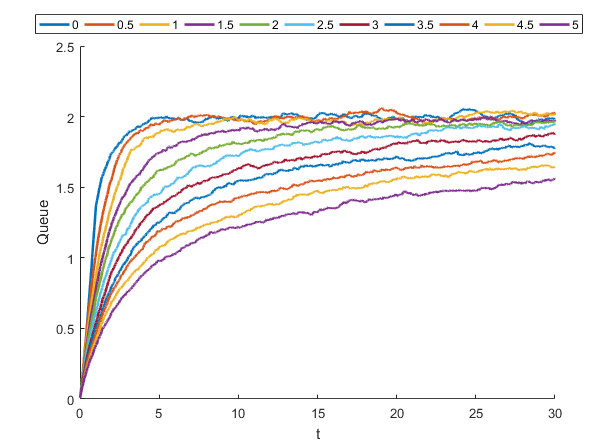}\includegraphics[width=.495\textwidth]{./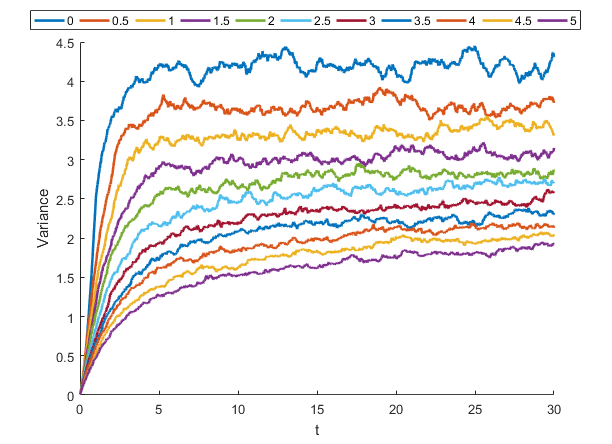}
\caption{Mean (left) and Variance (right) of the ${Hawkes/Lognormal/\infty}$ with ${\lambda^* = 1}$, ${\alpha = 1}$, and ${\beta = 2}$ where Mean Service Durations is 1 and Service Variance Increases from 0 to 5.} \label{hawkesLN}
\end{center}
\end{figure}

\section{Applications} \label{sec_applications}

To motivate this study and demonstrate its findings, we now \edit{briefly discuss two applications of this work, one concerned with viral internet traffic and one covering night clubs. Each is inspired by the self-excitement behavior of the Hawkes process, and in these \minrev{settings} we consider the impact and influence one arrival can have on a system and how managers of such systems might try to harness that influence for some kind of benefit.}

\subsection{Trending Web Traffic}\label{webtraffic}
In May 2017 website rankings for the \edit{United} States, Youtube, Facebook, and Reddit each ranked among the top 5 most visited websites, with Twitter in the top 10 and LinkedIn and Instagram both in the top 15, per Alexa \cite{websites}. For Facebook, Reddit, and Twitter in particular, users' interactions with the sites frequently involve viewing links to external media like videos, articles, and shopping sales. A user's exposure to a webpage and her likelihood to share it herself is directly influenced by whether she sees the link from other users. As users choose to visit and potentially re-share links posted by other users, the link may start trending or become ``viral.'' This means that it is receiving high levels of traffic and arrivals to the site, and this may lead to even more arrivals while the users continue to share it on various social platforms. For a business or organization, going viral can lead to significant jumps in exposure, interest, and revenue.

As a basic example, we analyzed publicly available Twitter data \cite{mckelvey2013truthy}. This data set covers all tweets featuring both a URL and a hashtag from November 2012 and includes the tweet timestamp, the hashtags used, and the URL's linked, as well as an anonymous user ID. Perhaps the most notable event captured among the reactions in this data set is the 2012 U.S. Presidential election, which was held on November 6. Among the bountiful election-related tweets are 106 posts of the music video for Young Jeezy's 2008 song \textit{My President} from the start of November 5 to midday on November 7.  A plot of the timestamps of these tweets along with the total number of tweets occurring by that time is below. Note the flurry of posts once the election results were announced; 60 of the data's 106 postings of the video occur within an hour's time. \edit{A quick numerical investigation suggests that this type of extreme viral reaction may be more likely in certain parameter settings. In 100,000 simulation replications of a system with $\lambda^* = 0.5$, $\alpha = 19.5$, and $\beta = 20$, 82.4\% of the trials had a majority of arrivals occur within one time quartile. By comparison, in the same number of replications for a system with $\lambda^* = 1$, $\alpha = 0.5$, and $\beta = 1$, this only occurred for 18.0\% of the experiments.
} However, even outside of the main spike in this data, users seem to be posting the video in clustered time segments, approximately at the 6, 20, 45, 48, and 52 hour marks. These clusters suggest that these arrivals could be appropriately modeled by a Hawkes process, particularly when compared to a Poisson process.

\vspace{-.15in}
\begin{figure}[H]
\begin{center}	
\includegraphics[scale=.625]{./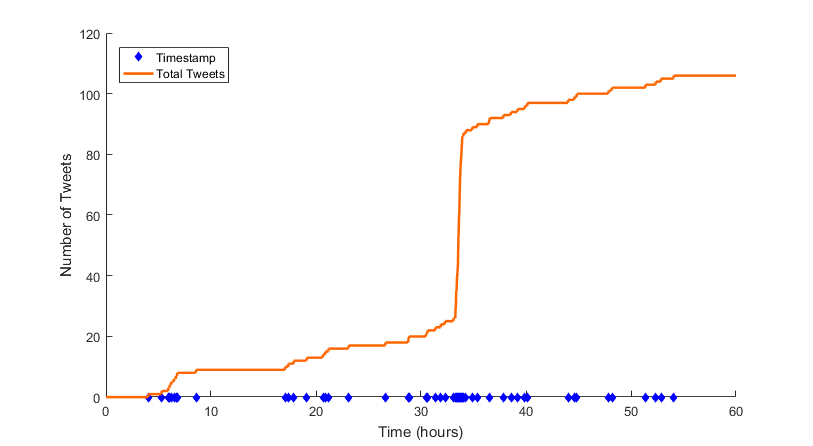}
\vspace{-.05in}
\caption{Tweets of Young Jeezy - \textit{My President} music video from November 5 - 7, 2012.}
\label{mypres}
\end{center}
\end{figure}
\vspace{-.25in}

Using what we have observed from this data as inspiration, we now model users arriving to a webpage as a Hawkes process. Because of the viral behavior we have seen in this type of arrivals, we will investigate the impact of a click. Consider a Hawkes Process $N_t$ with baseline intensity $\lambda^*$, initial intensity $\lambda_0$, jump size $\alpha$, and decay parameter $\beta$. Now, let $\edit{\hat N_t}$ represent an independent Hawkes process that is identical to $N_t$ \edit{in terms of parameters} with the exception that it experienced an arrival at time 0, whereas $N_t$ starts empty. This means that the baseline intensity, jump size, and decay parameter are the same for $\edit{\hat N_t}$ as they were for $N_t$, but the initial intensity is $\lambda_0 + \alpha$ and $\edit{\hat N_0 = 1}$. Then, by \edit{Proposition~\ref{hawkgeneral}},
\begin{align*}
\E{\edit{\hat N_t}} - \E{N_t}
&=
\lambda_\infty t + \frac{\lambda_0 + \alpha - \lambda_\infty}{\beta - \alpha}\left(1 - e^{-(\beta - \alpha)t}\right) + 1
-
\lambda_\infty t - \frac{\lambda_0 - \lambda_\infty}{\beta - \alpha}\left(1 - e^{-(\beta - \alpha)t}\right)
\\
&
=
\frac{\beta}{\beta - \alpha} - \frac{ \alpha }{\beta - \alpha}e^{-(\beta - \alpha)t}
\longrightarrow
\frac{\beta}{\beta - \alpha} \text{ as $t\to\infty$}
\end{align*}
which shows that the gap between the two expectations is positive and grows throughout time. However, this is simply tracking the number of visitors\edit{; it} does not account for the time the users spend on the site. To capture this, we can extend this arrival model to a queueing model in which the service represents the time the user spends on the webpage. Provided the website is well hosted, this \edit{can be modeled as} an infinite server queue as any user can visit the webpage that chooses to do so. If the time each user spends on the page is independently and exponentially distributed with rate $\mu$, we see that the expected number of users on the page at time $t$ is $\E{Q_t}$. Then, from time 0 to time $T$ the expected total time spent on the page across all users $\sigma(T)$ is
\begin{align*}
\sigma(T)
=
\int_0^T \E{Q_t} \, \mathrm{d}t
&
=
\int_0^T
\left(\frac{\lambda_\infty}{\mu} \left(1 - e^{-\mu t}\right)
+
\frac{ \lambda_0 - \lambda_\infty }{\mu - \beta + \alpha}\left(e^{-(\beta - \alpha) t}  - e^{- \mu t}\right)
\right) \mathrm{d}t
\\
&
=
\frac{\lambda_\infty}{\mu} \left(T
-
\frac{1 - e^{-\mu T}}{\mu} \right)
+
\frac{ \lambda_0 - \lambda_\infty }{\mu - \beta + \alpha}\left(\frac{1 -e^{-(\beta - \alpha) T}}{\beta - \alpha} - \frac{1  - e^{- \mu T}}{\mu}\right)
\end{align*}
where we have applied the results of Corollary~\ref{hawkqueuehyper} for hyper-exponential service with $n=1$ and $\mu \ne \beta - \alpha$, thus yielding exponential service. Now, suppose that a website earns $m$ dollars per unit of time in advertising revenue for each user on the site. Then, the expected earnings by time $T$ is $A(T) = m \sigma(T)$. We can now repeat the value of a click experiment when also considering service. Let $Q_t$ be a queueing system with exponential service at rate $\mu$, infinite servers, and Hawkes process arrivals with parameters $\lambda^*$, $\alpha$, and $\beta$ and assume the queue starts empty. Then, let \edit{$\hat Q_t$ be the analogous adaptation of $Q_t$ that $\hat N_t$ is to $N_t$.} Let $A(T)$ and \edit{$\hat A(T)$} be the corresponding expected \edit{dwell time} revenues, each with earning rate $m$. Note that the expected time the initial customer has spent in the system by time $T$ is $\min\{S, T\}$ where $S$ is the duration of her service. Hence the revenue associated with her visit to the page by time $T$ is $m \frac{1 - e^{-\mu T}}{\mu}$. Then,
\begin{align*}
\hat A(T) - A(T)
&=
m \frac{1 - e^{-\mu T}}{\mu}
+
m
\frac{ \alpha }{\mu - \beta + \alpha}\left(\frac{1 -e^{-(\beta - \alpha) T}}{\beta - \alpha} - \frac{1  - e^{- \mu T}}{\mu}\right)
\\
&
=
\frac{m}{\mu}\left(1 + \frac{1}{\beta - \alpha}\right)
-
m
\frac{ \alpha e^{-(\beta - \alpha) T} }{(\beta - \alpha)(\mu - \beta + \alpha)}
-
m
\frac{(\mu - \beta)e^{-\mu T}}{\mu(\mu - \beta + \alpha)} \edit{,}
\end{align*}
which can be shown to also always grow with $T$ via its first derivative. \edit{We can also further observe that as $\alpha \to \beta$ each of these gaps \minrev{grows} towards infinity, and thus so grows the impact of a click in viral settings.}

Note that this model can also be used for internet-inspired applications other than users arriving to internet pages. For example, as mobile carriers continue to add cloud storage based services and allow customers to upload pictures from their smart phones as soon as they are taken, the $Hawkes / M / \infty$ queue can be used to describe the number of pictures being uploaded at once. \edit{For further reading on the Hawkes process and its use in internet traffic applications see \cite{rizoiu2017expecting}, in which the authors develop a novel Hawkes-process-based model for the popularity of online content in great detail.}

\subsection{Club Queue}\label{clubqueue}

From our Hawkes driven infinite server queue with phase-type service distributions, we can construct what we refer to as the \emph{Club Queue}. This stems from an application perhaps uncommon to queueing systems, a nightclub. This setting features a key characteristic: the best club has the most people waiting for it. \edit{Because of this, the Hawkes process naturally represents the excitation exhibited by club-goers joining a queue as many club-goers might call their friends to join them. With this application in mind, it is important to understand the characteristics of nightclubs.  Many nightclubs have waiting spaces for potential customers outside the club.  Moreover, inside the club is where much of the activity happens.  Thus, using phase-type distributions we can model the inside and outside of the club as two phases of services or a two dimensional phase-type queue. The first phase of service can be considered ``admittance'' to the service with the second step being the service itself.} \minrev{Because the clubs' bouncers have the ability to admit customers into the venue from any position in the external queue and because each customer determines how long she stays in the club, we model this scenario as an infinite server queue.} This process is visualized below, where $\mu_O$ and $\mu_I$ are the rates of each step of service.\\
\begin{figure}[H]
\begin{center}
\begin{tikzpicture}[>=latex]
% the rectangles with vertical rules
\draw (-1,0) -- ++(2cm,0) -- ++(0,-1.5cm) -- ++(-2cm,0);
\foreach \i in {1,...,4}
  \draw (1cm-\i*10pt,0) -- +(0,-1.5cm);

\draw (4.4,0) -- ++(2cm,0) -- ++(0,-1.5cm) -- ++(-2cm,0);
\foreach \i in {1,...,4}
  \draw (6.4cm-\i*10pt,0) -- +(0,-1.5cm);

% the circles
\draw (1.75,-0.75cm) circle [radius=0.75cm]; %outside
\draw (7.15,-0.75cm) circle [radius=0.75cm]; %inside

%% the arrows and labels
\draw[->] (2.5,-0.75) -- +(55pt,0);
\draw[->] (7.9,-0.75) -- +(40pt,0);
\draw[<-] (-1,-0.75) -- +(-50pt,0);
\node at (3.35,-0.5cm) {$\mu_O$};
\node at (3.4,-1.05cm) {\footnotesize Admittance};
\node at (8.45,-0.5cm) {$\mu_I$};
\node at (-2, -0.5cm) {$\lambda_t$};
\node[align=center] at (0.65cm,-2cm) {Arrivals};
\node[align=center] at (6.15cm,-2cm) {Service};
\node[align=center] at (1.75cm,-0.75cm) {$Q_O$};
\node[align=center] at (7.15cm,-0.75cm) {$Q_I$};
\end{tikzpicture}
\end{center}
\begin{caption}
{Club Queue Process Diagram.}
\end{caption}\label{clubqueuediagram}
\end{figure}
We can represent the Club Queue using the two dimensional vector of queue lengths $Q(t)$ for $t \geq 0$, with coordinates $Q_I(t)$ and $Q_O(t)$ representing the service systems inside and outside the club, respectively. \edit{A fundamental managerial task is to figure out at what rate to admit club-goers into the club to maximize profitability while making the club attractive from the outside. This is non-trivial as a short line outside the club might signal to others that the club is not interesting and make them choose to not go inside the club.  However, if the line is too long, there are many customers not actively generating revenue for the club and \minrev{becoming} frustrated with the wait outside.}  With this in mind, we construct the following objective function that maximizes the rate at which the bouncer of the club should let club-goers inside the club \minrev{over the finite time horizon $[0,T]$, where $T > 0$}.
\begin{equation}
\mathcal{\zeta}(\mu_O(t)) = r_O \mu_O \E{Q_O(t)} + r_I  \E{Q_I(t)} - c (\mu_O\E{Q_O(t)} - k)^2 - w \mu_O^2    \label{objecive_function0}
\end{equation}
Here $r_O \geq 0$ and $r_I \geq 0$ are revenues generated from the cover outside and inside the club respectively. We also have that $c$ is a penalty for having the overall admittance rate be too slow or too fast and finally, $w$ is a penalty for admitting each individual customer too quickly.
%%We can also add in constraints for the rate at which we let club-goers inside the club, however, for simplicity, we do not include these in our model.
%%\begin{eqnarray}
%%\int_{0}^{T} r_O \cdot \mu_O \cdot E[(Q_O(t))]dt \geq \mathcal{E} \label{isoperimetric_constraint0}
%%\end{eqnarray}
%%where
%%\begin{eqnarray}
%%\mathcal{E} \equiv \epsilon \int_{0}^{T}\lambda(t)dt
%%\end{eqnarray}
%% and $\epsilon$ is the minimum allowable fraction of customers that do not make it into the club by time $T$. This quality control is essentially an isoperimetric constraint \cite{hampshire2009time, hampshire2005variational} that specifies that during the planning period $[0,T]$, the fraction of customers that will make it into the club, $\int_{0}^{T} r_O \cdot \mu_O \cdot E(Q_O(t)]dt$, must be greater than or equal to  $\mathcal{E}$.
A complete formulation of this optimal control problem is presented next.

\begin{problem}[Unconstrained Club Profit Model]\label{profit_problem}
\begin{eqnarray*}
 &\max_{\lbrace \mu_O\geq 0\rbrace}&\int_{0}^{T} \left [ r_O \mu_O(t) \E{Q_O(t)} + r_I  \E{Q_I(t)} - c (\mu_O(t)\E{Q_O(t)} - k)^2 - w \mu_O(t)^2  \right ] \mathrm{d}t \nonumber \\
&&\text{subject to} \nonumber\\\label{equation_motion}
&&\updot{\mathrm{E}}[\lambda(t)]=\beta \cdot ( \lambda^* - \mathrm{E}[\lambda(t)] ) + \alpha \cdot \mathrm{E}[\lambda(t)]  \nonumber\\
&&\updot{\mathrm{E}}[Q_O(t)]= \mathrm{E}[\lambda(t)]-\mu_O(t) \cdot \mathrm{E}[Q_O(t)] \nonumber\\
&&\updot{\mathrm{E}}[Q_I(t)]=\mu_O(t) \cdot \mathrm{E}[Q_O(t)]  - \mu_I \cdot \mathrm{E}[Q_I(t)] \nonumber
\end{eqnarray*}
\end{problem}

The solution to this problem gives the optimal rate to admit club-goers across time in order to maximize the difference between club revenue and  the queue length and admittance rate penalties. This is characterized by the following theorem.

\begin{theorem}%[Unconstrained Club Profit Optimal Solution]
The optimal solution to Problem~\ref{profit_problem} is given by $\mu^*_O(t)$, where
\begin{align}
\mu^*_O(t) = \frac{(r_O + 2ck - \gamma_1 + \gamma_2)\E{Q_O(t)}}{2w + 2c\E{Q_O(t)}^2} \label{optmu}
\end{align}
for all \minrev{$t \in [0,T]$}.
\end{theorem}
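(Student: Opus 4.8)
The plan is to treat this as a deterministic optimal control problem in the three mean states $\E{\lambda(t)}$, $\E{Q_O(t)}$, and $\E{Q_I(t)}$ with scalar control $\mu_O(t)$, and to apply Pontryagin's maximum principle. First I would introduce co-state functions $\gamma_0$, $\gamma_1$, $\gamma_2$ associated respectively with the equations of motion for $\E{\lambda}$, $\E{Q_O}$, and $\E{Q_I}$, and form the Hamiltonian
\begin{align*}
H &= r_O \mu_O \E{Q_O(t)} + r_I \E{Q_I(t)} - c\left(\mu_O\E{Q_O(t)} - k\right)^2 - w\mu_O^2 \\
&\quad + \gamma_0\left(\beta\lambda^* - (\beta - \alpha)\E{\lambda(t)}\right) + \gamma_1\left(\E{\lambda(t)} - \mu_O \E{Q_O(t)}\right) + \gamma_2\left(\mu_O\E{Q_O(t)} - \mu_I\E{Q_I(t)}\right).
\end{align*}

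Next I would maximize $H$ over the control pointwise in $t$. Collecting the $\mu_O$-dependent terms, $H$ is quadratic in $\mu_O$ with leading coefficient $-\left(w + c\E{Q_O(t)}^2\right) < 0$, hence strictly concave, so the interior maximizer is the unique stationary point. Setting $\partial H/\partial\mu_O = 0$ yields
\[
\left(r_O + 2ck - \gamma_1 + \gamma_2\right)\E{Q_O(t)} - 2\left(w + c\E{Q_O(t)}^2\right)\mu_O = 0,
\]
which rearranges directly to the claimed expression for $\mu^*_O(t)$. For completeness I would record the adjoint system $\updot{\gamma}_0 = (\beta - \alpha)\gamma_0 - \gamma_1$, $\updot{\gamma}_1 = -\mu_O\left(r_O - 2c(\mu_O\E{Q_O(t)} - k) - \gamma_1 + \gamma_2\right)$, and $\updot{\gamma}_2 = \mu_I\gamma_2 - r_I$, together with the transversality conditions $\gamma_0(T) = \gamma_1(T) = \gamma_2(T) = 0$, so that the $\gamma_1$ and $\gamma_2$ in the formula are precisely these co-states evaluated along the optimal trajectory.

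Finally, I would address the control constraint $\mu_O \geq 0$ and the question of sufficiency. Because $H$ is strictly concave in $\mu_O$ for every fixed value of the states and co-states, the pointwise maximizer over $\{\mu_O \geq 0\}$ coincides with the interior stationary point whenever that point is nonnegative and equals $0$ otherwise; the stated formula is the interior branch. The main obstacle I anticipate is upgrading the necessary conditions to a genuine optimality certificate: the term $\mu_O\E{Q_O(t)}$ is bilinear, so $H$ is not jointly concave in state and control and the Mangasarian sufficiency test does not apply verbatim. I would resolve this either by invoking an existence theorem for the optimal control (the dynamics are affine in $\mu_O$ and the running cost is coercive in $\mu_O$), which forces the maximum-principle extremal to be optimal, or by exploiting the triangular structure of the dynamics — $\E{\lambda}$ does not depend on $\mu_O$, so once its trajectory is determined the remaining two-dimensional problem in $\left(\E{Q_O}, \E{Q_I}\right)$ can be analyzed on its own — to confirm that the candidate is the global maximizer.
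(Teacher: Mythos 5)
Your proposal is correct and follows essentially the same route as the paper: form the Hamiltonian, use strict concavity in $\mu_O$ to justify that the stationary point is the maximizer, and solve $\partial \mathcal{H}/\partial \mu_O = 0$ to obtain the stated formula. The extra material you include (the adjoint system, transversality conditions, and the discussion of the constraint $\mu_O \geq 0$ and sufficiency) goes beyond what the paper records but does not change the core argument.
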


\begin{proof}
We start by transforming the optimization model into a single Hamiltonian equation, which can be thought of as an unconstrained version of the Lagrangian. For this problem, we have the Hamiltonian $\mathcal{H}$ as
\begin{align*}
\mathcal{H}(t,\gamma)
&=
\mathcal{\zeta}(\mu_O(t))
-
\gamma_1\left(\updot{\mathrm{E}}[Q_O(t)] - \mathrm{E}[\lambda(t)] + \mu_O \mathrm{E}[Q_O(t)]\right)
-
\gamma_2\bigg(\updot{\mathrm{E}}[Q_I(t)] - \mu_O \mathrm{E}[Q_O(t)]
\\
&
\quad
 + \mu_I  \mathrm{E}[Q_I(t)]\bigg)
-
\gamma_3\left(\updot{\mathrm{E}}[\lambda(t)] - \beta \cdot ( \lambda^*(t) - \mathrm{E}[\lambda(t)] ) - \alpha \cdot \mathrm{E}[\lambda(t)] \right)
\end{align*}
where each $\gamma_i \in \mathbb{R}$ for $i \in \{1,2,3\}$. To achieve optimality in the control problem, the method ensures that $\mu_O(t)$ is such that $\frac{\mathrm{d}\mathcal{H}}{\mathrm{d}\mu_O(t)} = 0$ for all \minrev{$t \in [0,T]$}. We see that the derivative of the Hamiltonian with respect to $\mu_O(t)$ is
\begin{align*}
\frac{\mathrm{d}\mathcal{H}}{\mathrm{d}\mu_O(t)}
&=
r_O\E{Q_O(t)} - 2c\mu_O(t)\E{Q_O(t)}^2 + 2ck\E{Q_O(t)} - 2w\mu_O(t) - \gamma_1 \E{Q_O(t)} + \gamma_2 \E{Q_O(t)} .
\end{align*}
Thus, the optimal $\mu^*_O(t)$ is found by solving
\begin{align*}
0
&=
 \frac{\mathrm{d}\mathcal{H}}{\mathrm{d}\mu_O(t)}
 =
(r_O  + 2ck - \gamma_1  + \gamma_2) \E{Q_O(t)}
- (2c\E{Q_O(t)}^2 + 2w)\mu^*_O(t)
\end{align*}
for $\mu^*_O(t)$, which \minrev{yields the expression in Equation~\ref{optmu}}. Because the objective function is concave in $\mu_O(t)$ at every $t$, we have that this solution corresponds to a maximum.
\end{proof} 

Using the differential equations shown in Section \ref{sec_hawkphinf}, this optimization problem can be solved numerically by the Forward Backward sweep method as in \citet{niyirora2016optimal, qin2017dynamic, lenhart2007optimal}. We now give two example outputs of this method below.
\begin{figure}[H]
\captionsetup{justification=centering}
		\hspace{-.35in}~\includegraphics[scale = .45]{./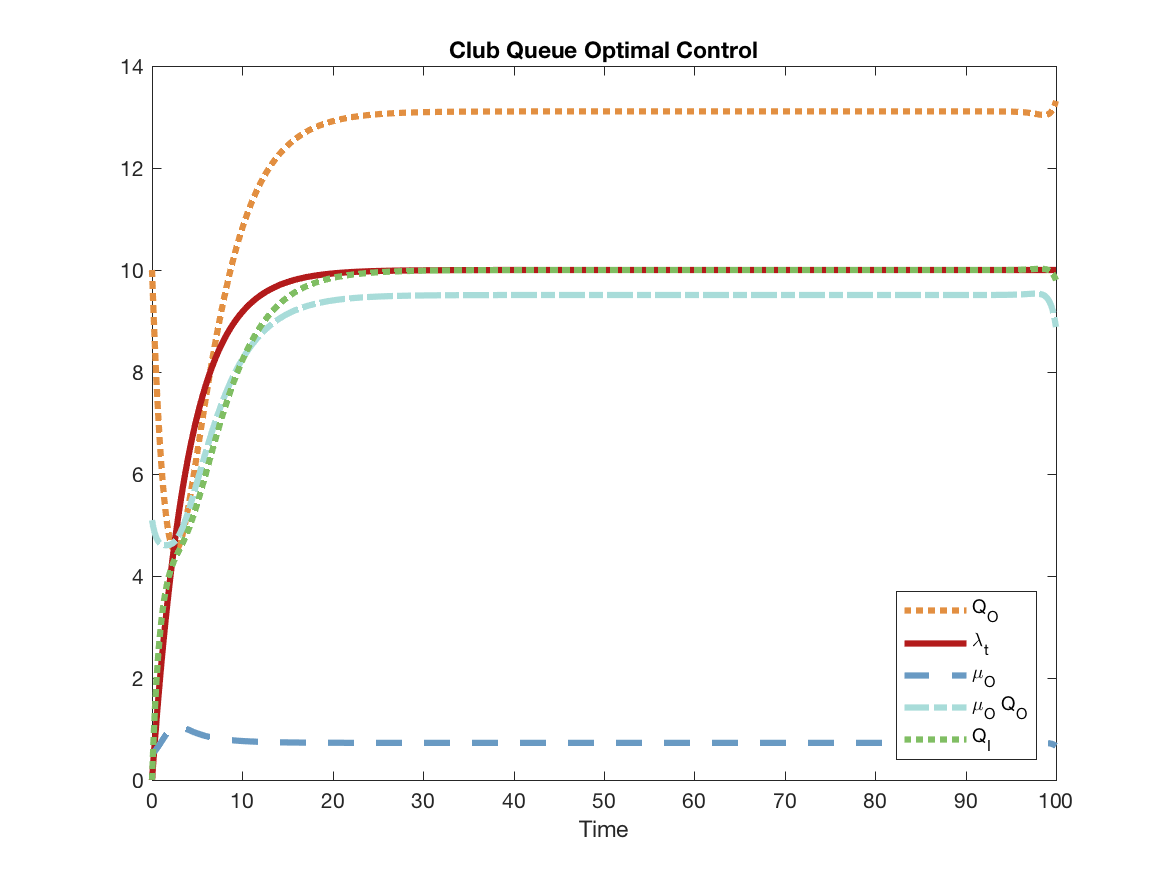}~\hspace{-.4in}~\includegraphics[scale = .45]{./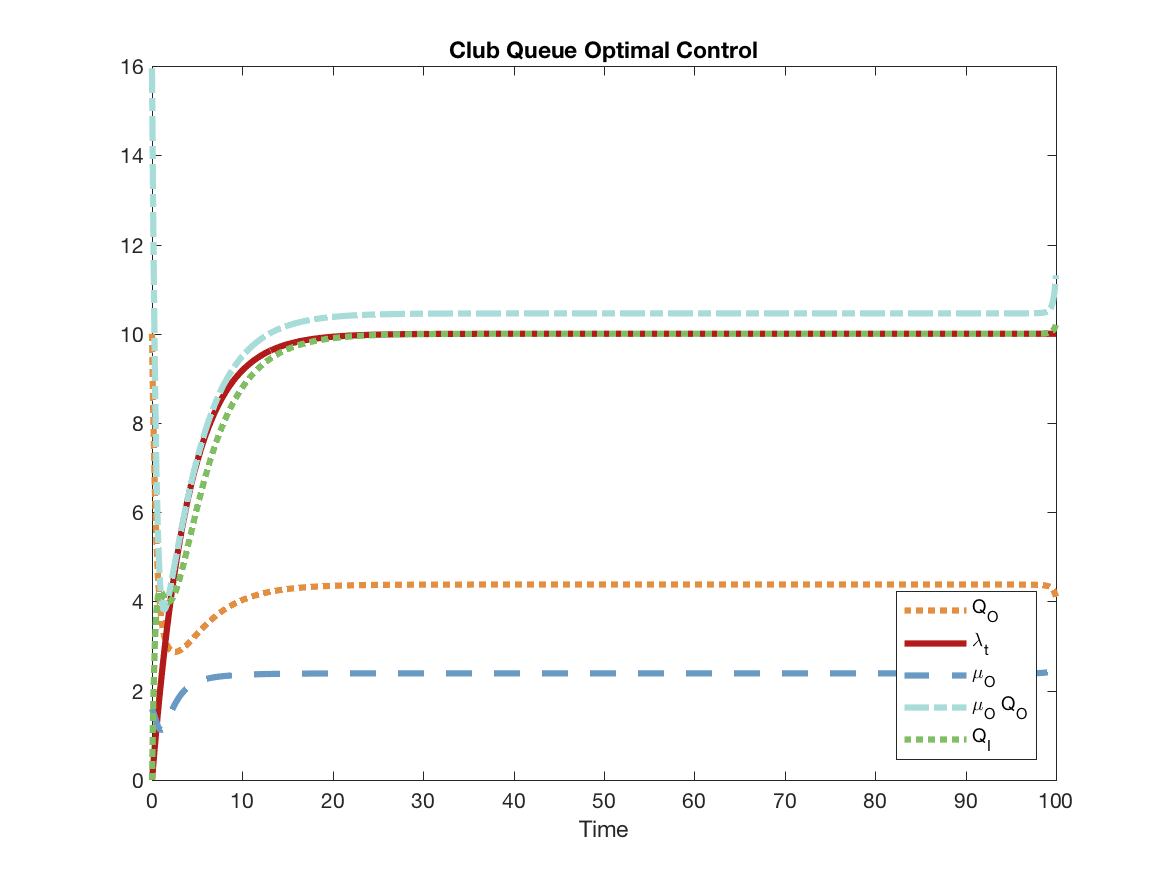}
\caption{Example Forward Backward Sweep Implementation.} \label{hawkescontrol}
\end{figure}

In the scenario on the left, the parameters are as follows: $r_O$, the external entrance revenue rate, is equal to 100 units of currency per units of time. The revenue per person inside, $r_I$, is equal to 100 units of currency per person. The cost of deviating from the desired admittance rate $k$, $c$, is also 100, whereas $k = 8$. Finally, the penalty for admitting individuals too quickly, $w = 150$. On the right, $w$ is instead 100 and $k = 12$. These changes have significant impacts on the resulting solution. On the left the outside queue is allowed to grow roughly three times as large whereas on the right $\mu_O$ is approximately twice the size of that on the left.

\section{Conclusion and Final Remarks} \label{sec_conclusion}

In this paper, we analyze a new infinite server stochastic queueing model that is driven by a Hawkes arrival process and phase-type \minrev{distributed service}.  We are able to derive the exact moments and moment generating function for the Hawkes driven queue as well as the Hawkes process itself.

Although we have analyzed this queueing model in great detail, there are many extensions that are worthy of future study.  One extension that we intend to explore is the impact of a non-stationary baseline intensity in the spirit of \citet{massey2013gaussian, pender2014gram, engblom2014approximations, pender2016analysis, pender2015nonstationary, pender2015truncated, pender2016risk}.  In one simple example, we could set the baseline be $\lambda^*(t) = \lambda^* + \rho \cdot \sin(t) $.  This analysis of a non-stationary baseline intensity is important not only because arrival rates of customers are not constant over time, but also because it is important to know how to distinguish and separate the impact of the time varying arrival rate from the impact of the stochastic dynamics of the self-excitation.  The extension of one periodic function such as $\sin(t)$ seems analytically tractable, however, additional functions may require Fourier analysis.

Other extensions include the modeling of different types of queueing models other than the infinite server model.  For example, it would be interesting to apply our analysis to the Erlang-A queueing model with abandonments.  With regard to obtaining analytical expressions for the Erlang-A model, this is a non-trivial problem because even the Erlang-A queueing model with a Poisson arrival process is analytically \edit{somewhat} intractable.  This presents new challenges for deriving analytical formulas and approximations for the moment behavior of this type of queueing model.  Work by \citet{massey2011poster, pender2014poisson, pender2014laguerre, pender2015nonstationary, pender2016sampling, daw2017new} shows that simple closure approximations or spectral expansions can be effective at approximating the dynamics of the Erlang-A model and variants.  Thus, a natural extension is to apply these techniques to the Erlang-A setting when it is driven by a Hawkes process.  Not only do these approximations have the potential to describe the moment dynamics, but they can be used to stabilize performance measures like in \citet{pender2017approximating}.  A detailed analysis of these extensions will provide a better understanding how the information that operations managers provide to their customers will affect the dynamics of these real world systems like in \citet{pender2017queues, pender2018analysis, pender2017strong}.  We plan to explore these extensions in subsequent work.

\section*{Acknowledgements}
This work is supported by the National Science Foundation under grant DGE-1650441.

\bibliographystyle{plainnat}
\bibliography{hawkes}

\section*{Appendix}

\subsection*{A.1.\quad Auto-covariance of the $Hawkes/PH/\infty$ Queue}
\begin{proposition}
Consider the $Hawkes/PH/\infty$ queue described in Section~\ref{sec_hawkphinf} with sub-generator matrix $S \in \mathbb{R}^{n \times n}$ such that $S + (\beta - \alpha)I$ is nonsingular. Then, for $t \geq \tau \geq 0$,
\begin{align*}
&\Cov{Q_t, Q_{t-\tau}}
=
\lambda_\infty\left(-S^\T \right)^{-1}\left(I - e^{S^\T \tau}\right)\theta
\Bigg(
\lambda_\infty\hspace{-.1cm} \left(- S^\mathrm{T}\right)^{-1}
\big(I - e^{S^\mathrm{T} (t - \tau)}\big)\theta
-
\left( \lambda_0 - \lambda_\infty \right)\left(S^\mathrm{T} + (\beta - \alpha)I\right)^{-1}
\\
&
\quad
\cdot
\big(e^{-(\beta - \alpha) (t - \tau)} I - e^{S^\mathrm{T} (t - \tau)}\big)\theta
\Bigg)^\T
-
\left(S^\T + (\beta - \alpha)I\right)^{-1} \left(e^{-(\beta - \alpha)\tau} I - e^{S^\T \tau} \right) \theta
\Bigg(
\frac{\alpha(2\beta - \alpha)\lambda_\infty}{2(\beta - \alpha)}
\\
&
\quad
\cdot
\left((\beta - \alpha)I - S^\mathrm{T}\right)^{-1}\left(I - e^{(S^\mathrm{T} - (\beta - \alpha)I)(t- \tau)}\right)\theta
-
\frac{\alpha\beta(\lambda_0 - \lambda_\infty)}{\beta - \alpha}
\left(S^\mathrm{T}\right)^{-1}
\Big(e^{-(\beta - \alpha)(t-\tau)}I
\\
&
\quad
-
e^{(S^\mathrm{T} - (\beta - \alpha)I)(t-\tau)}\Big) \theta
+
\frac{\alpha^2 (2\lambda_0 - \lambda_\infty)}{2(\beta - \alpha)} \left(S^\mathrm{T} + (\beta - \alpha)I\right)^{-1}
\nonumber
\Big(e^{-2(\beta - \alpha)(t-\tau)}I
\\
&
\quad
-
e^{(S^\mathrm{T} - (\beta - \alpha)I)(t-\tau)}\Big) \theta
+
\left(\lambda_\infty + (\lambda_0 - \lambda_\infty)e^{-(\beta - \alpha)(t-\tau)}\right)
\Big(
\lambda_\infty\hspace{-.1cm} \left(- S^\mathrm{T}\right)^{-1}\big(I - e^{S^\mathrm{T}(t - \tau)}\big)\theta
\\
&
\quad
-
\left( \lambda_0 - \lambda_\infty \right)\left(S^\mathrm{T} + (\beta - \alpha)I\right)^{-1}
\big(e^{-(\beta - \alpha) (t - \tau)} I - e^{S^\mathrm{T} (t - \tau)}\big)\theta
\Big)
\Bigg)^\T
+
\lambda_\infty \left(S^\T + (\beta - \alpha)I\right)^{-1}
\\
&
\quad
\cdot
 \left(e^{-(\beta - \alpha)\tau} I - e^{S^\T \tau} \right)
\theta
\Big(
\lambda_\infty \left(- S^\mathrm{T}\right)^{-1}
\big(I - e^{S^\mathrm{T} (t - \tau)}\big)\theta
-
\left( \lambda_0 - \lambda_\infty \right)\left(S^\mathrm{T} + (\beta - \alpha)I\right)^{-1}
\\
&
\quad
\cdot
\big(e^{-(\beta - \alpha) (t - \tau)} I - e^{S^\mathrm{T} (t - \tau)}\big)\theta
\Big)^\T
+
\frac{\alpha(2\beta - \alpha)\lambda_\infty}{2(\beta - \alpha)}
\left((\beta - \alpha)I - S^\T\right)^{-1}
\textcolor{black}{e^{S^\T \tau}}
\Bigg(
2(\beta - \alpha)e^{S^\T (t - \tau)}
\\
&
\quad
\cdot
M_{0,\theta, S}(t - \tau)e^{S (t - \tau)}
+
\theta \theta^\T
-
e^{S^\T (t - \tau)}
\theta \theta^\T
e^{S (t - \tau)}
+
e^{S^\T (t - \tau)}
\theta \theta^\T
\left(e^{-(\beta - \alpha) (t - \tau)}I - e^{S (t - \tau)} \right)
\\
&
\quad
\cdot
((\beta - \alpha)I + S)^{-1}((\beta - \alpha)I - S)
+
\left((\beta - \alpha)I - S^\T\right)\left((\beta - \alpha)I + S^\T\right)^{-1}
\Big(e^{-(\beta - \alpha) (t - \tau)}I
\\
&
\quad
-
e^{S^\T (t - \tau)}\Big) \theta \theta^\T e^{S (t - \tau)}
\Bigg)
\left((\beta - \alpha)I - S\right)^{-1}
+
\frac{\alpha\beta(\lambda_0 - \lambda_\infty)}{\beta - \alpha}
\left( S^\T\right)^{-1}
\textcolor{black}{e^{S^\T \tau}}
\Bigg(
(\beta - \alpha)e^{S^\T (t - \tau)}
\\
&
\quad
\cdot
M_{-(\beta - \alpha),\theta, S}(t - \tau)e^{S (t - \tau)}
+
e^{-(\beta - \alpha) (t - \tau)}\theta \theta^\T
-
e^{S^\T (t - \tau)}
\theta \theta^\T
e^{S (t - \tau)}
-
e^{S^\T (t - \tau)}
\theta \theta^\T
\\
&
\quad
\cdot
\left(e^{-(\beta - \alpha) (t - \tau)}I - e^{S (t - \tau)} \right)
((\beta - \alpha)I + S)^{-1}S
-
S^\T\left((\beta - \alpha)I + S^\T\right)^{-1}
\Big(e^{-(\beta - \alpha) (t - \tau)}I
\\
&
\quad
-
e^{S^\T (t - \tau)}\Big)
\theta \theta^\T e^{S (t - \tau)}
\Bigg)
S^{-1}
-
\frac{\alpha^2(2\lambda_0 - \lambda_\infty)}{2(\beta - \alpha)}\big((\beta - \alpha)I + S^\T\big)^{-1}
\textcolor{black}{e^{S^\T \tau}}
\Bigg(
e^{-2(\beta - \alpha) (t - \tau)}\theta \theta^\T
\\
&
\quad
-
e^{S^\T (t - \tau)}
\theta \theta^\T
e^{S (t - \tau)}
-
e^{S^\T (t - \tau)}
\theta \theta^\T  \big(e^{-(\beta - \alpha) (t - \tau)}I
- e^{S (t - \tau)} \big)
-
\left(e^{-(\beta - \alpha) (t - \tau)}I - e^{S^\T (t - \tau)}\right)
\\
&
\quad
\cdot
\theta \theta^\T e^{S (t - \tau)}
\Bigg)
\left((\beta - \alpha)I + S\right)^{-1}
-
\lambda_\infty
\textcolor{black}{e^{S^\T \tau}}
\mathrm{diag}\Big(
\left(S^\mathrm{T}\right)^{-1}
\left(I - e^{S^\mathrm{T} (t - \tau)}\right)\theta
\Big)
  -
 ( \lambda_0 - \lambda_\infty )
 \\
 &
 \quad
 \cdot
 \textcolor{black}{e^{S^\T \tau}}
\diag{
\left(S^\mathrm{T} + (\beta - \alpha)I\right)^{-1}\left(e^{-(\beta - \alpha) (t - \tau)} I - e^{S^\mathrm{T} (t - \tau)}\right)\theta
}
+
\Big(
\lambda_\infty \left(- S^\mathrm{T}\right)^{-1} \hspace{-.1cm}\big(e^{S^\T \tau} - I\big)\theta
\\
&
\quad
-
( \lambda_0 - \lambda_\infty )\hspace{-.1cm}
\left(S^\mathrm{T} + (\beta - \alpha)I\right)^{-1}\hspace{-.1cm}\big(e^{-(\beta - \alpha)I t + (S^\T +(\beta-\alpha)I) \tau} I - e^{-(\beta - \alpha)t}I + e^{S^\T t} - e^{S^\mathrm{T} t }\big)\theta
\Big)
\hspace{-.1cm}
\Big(
\lambda_\infty\hspace{-.1cm} \left(- S^\mathrm{T}\right)^{-1}
\\
&
\quad
\cdot
\big(I - e^{S^\mathrm{T} t}\big)\theta
-
\left( \lambda_0 - \lambda_\infty \right)\left(S^\mathrm{T} + (\beta - \alpha)I\right)^{-1}\hspace{-.1cm}\big(e^{-(\beta - \alpha) t} I - e^{S^\mathrm{T} t}\big)\theta
\Big)^\T .
\end{align*}
\end{proposition}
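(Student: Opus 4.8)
The plan is to obtain the stated formula by direct substitution into Equation~\ref{autocorphaseeq}, which is already derived in Subsection~\ref{ssautoco} and which expresses $\Cov{Q_t, Q_{t-\tau}}$ — via conditioning on the filtration $\mathcal{F}_{t-\tau}$ — in terms of five quantities: the mean vector $\E{Q_{t-\tau}}$, the mean intensity $\E{\lambda_{t-\tau}}$, the intensity--queue covariance vector $\Cov{\lambda_{t-\tau}, Q_{t-\tau}}$, the covariance matrix $\Cov{Q_{t-\tau}, Q_{t-\tau}}$, and the terminal mean $\E{Q_t}$. All five have closed forms already available: $\E{\lambda_{t-\tau}}$ from Proposition~\ref{hawksolveprop}, and the remaining four from Theorem~\ref{hawkqueuegeneral}, evaluated at time $t-\tau$ (or at time $t$ for $\E{Q_t}$). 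Hence no genuinely new analysis is needed; the work is bookkeeping.

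First I would substitute the time-$(t-\tau)$ expressions from Theorem~\ref{hawkqueuegeneral} for $\E{Q_{t-\tau}}$, $\Cov{\lambda_{t-\tau}, Q_{t-\tau}}$, and $\Cov{Q_{t-\tau}, Q_{t-\tau}}$ into the respective terms of Equation~\ref{autocorphaseeq}, making sure that every matrix exponential and every occurrence of $M_{0,\theta,S}$ and $M_{-(\beta-\alpha),\theta,S}$ inside the covariance-matrix formula picks up the argument $t-\tau$, not $t$. Next I would deal with the left factor $e^{S^\T\tau}$ multiplying $\Cov{Q_{t-\tau},Q_{t-\tau}}$: using Lemma~\ref{commutelemma} I would commute $e^{S^\T\tau}$ past the leading inverses $((\beta-\alpha)I - S^\T)^{-1}$, $(S^\T)^{-1}$, and $((\beta-\alpha)I + S^\T)^{-1}$ of each block, so that $e^{S^\T\tau}$ comes to rest immediately before the parenthesized blocks, which is precisely the grouping in the statement; the two $\mathrm{diag}$ terms simply acquire a left multiplication by $e^{S^\T\tau}$.

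Then I would expand the remaining ``cross'' pieces. In the second term, the bracket $\Cov{\lambda_{t-\tau},Q_{t-\tau}}^\T + \E{\lambda_{t-\tau}}\E{Q_{t-\tau}}^\T - \lambda_\infty\E{Q_{t-\tau}}^\T$ is expanded with the closed forms and regrouped; in the last term, $(e^{S^\T\tau}\E{Q_{t-\tau}} - \E{Q_t})\E{Q_{t-\tau}}^\T$ is expanded by inserting the two-term forms of $\E{Q_{t-\tau}}$ and $\E{Q_t}$, after which $e^{S^\T\tau}$ distributes over $\E{Q_{t-\tau}}$ and the matrix exponentials combine (for instance $e^{S^\T\tau}e^{S^\T(t-\tau)} = e^{S^\T t}$, and $e^{S^\T\tau}$ against $e^{-(\beta-\alpha)(t-\tau)}I$ multiplies the corresponding inverse), producing the final block of the claimed identity. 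Throughout, the standing hypothesis that $S$ and $S + (\beta-\alpha)I$ are nonsingular — together with diagonal dominance of $S$ — is what licenses all the inverses, transposes, and commutations from Lemma~\ref{commutelemma}; I would note this explicitly at each use, as in the proof of Theorem~\ref{hawkqueuegeneral}.

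The main obstacle is purely organizational rather than conceptual: the expression contains many matrix-exponential products whose orders do not commute in general, several distinct inverses that must be kept in the correct position, and all $M$-function arguments must be shifted consistently to $t-\tau$. Care is needed so that every cancellation is tracked: the combinations that reduce to $\Var{Q_{t-\tau}}$-type terms cancel only because $\E{Q_{t-\tau}}\E{Q_{t-\tau}}^\T$ enters with matching signs from the definitions of covariance used to derive Equation~\ref{autocorphaseeq}, and the common factor $\E{Q_{t-\tau}}^\T$ must be maintained across the relevant lines of the final expression so that no spurious residual terms survive.
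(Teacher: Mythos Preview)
Your proposal is correct and matches the paper's own approach exactly: the paper's proof is the single sentence ``The stated result follows directly from substitution of the expressions in Theorem~\ref{hawkqueuegeneral} into Equation~\ref{autocorphaseeq}.'' Your write-up simply unpacks that substitution in more detail (the commutation of $e^{S^\T\tau}$ past the leading inverses via Lemma~\ref{commutelemma}, the handling of the $\mathrm{diag}$ terms, and the combination $e^{S^\T\tau}e^{S^\T(t-\tau)} = e^{S^\T t}$ in the final block), all of which is the right bookkeeping.
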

\begin{proof}
The stated result follows directly from substitution of the expressions in Theorem~\ref{hawkqueuegeneral} into Equation~\ref{autocorphaseeq}.
\end{proof}

%%%%%%%%%%%%%%%%%
\end{document}